\documentclass{amsart}

\usepackage{ae,aecompl}
\usepackage{chngcntr}
\usepackage{graphicx}
\usepackage[all,cmtip]{xy}
\usepackage{amsmath, amscd}
\usepackage{amsthm}
\usepackage{amssymb}
\usepackage{amsfonts}
\usepackage{qsymbols}
\usepackage{latexsym}
\usepackage{mathrsfs}
\usepackage{cite}
\usepackage{color}
\usepackage{url}
\usepackage{enumerate}
\usepackage[utf8]{inputenc}
\usepackage[T1]{fontenc}

\usepackage{verbatim}
\usepackage[draft=false, colorlinks=true]{hyperref}

\allowdisplaybreaks

\setlength{\unitlength}{1cm}
\setcounter{secnumdepth}{2}

\newcommand {\abs}[1]{\lvert#1\rvert}

\newcommand {\Be}{{B}}

\newcommand {\C}{{\mathbb C}}

\newcommand {\D}{D}

\newcommand {\ud}{\mathrm{d}}
\newcommand {\ue}{e}

\newcommand {\veps}{\varepsilon}

\newcommand {\Ell}{L}
\newcommand {\Ellp}{L^{p}}
\newcommand {\Ellpprime}{L^{p'}}
\newcommand {\Ellq}{L^{q}}

\newcommand {\Ellr}{L^{r}}

\newcommand {\F}{{\mathcal{F}}}
\newcommand {\Hr}{H}

\newcommand {\ui}{i}
\newcommand {\I}{{I}}

\newcommand {\La}{{\mathcal{L}}}
\newcommand {\calL}{{\mathcal{L}}}

\newcommand {\Ma}{{\mathcal{M}}}
\newcommand {\N}{{{\mathbb N}}}

\newcommand {\norm}[1]{\left\|#1\right\|}

\newcommand {\ph}{{\varphi}}

\newcommand {\R}{{\mathbb R}}
\newcommand {\Rd}{{\mathbb{R}^{d}}}

\newcommand {\supp}{{\mathrm{supp}}}
\newcommand {\Se}{\mathrm{S}}

\newcommand {\Sw}{\mathcal{S}}

\newcommand {\w}{{\omega}}
\newcommand {\W}{{\mathrm{W}}}

\newcommand {\Z}{{{\mathbb Z}}}

\newcommand {\vanish}[1]{\relax}

\newcommand{\wh}{\widehat}
\newcommand{\wt}{\widetilde}

\renewcommand{\restriction}{\mathord{\upharpoonright}}
\DeclareMathOperator{\Real}{Re}
\DeclareMathOperator{\Imag}{Im}

\newtheorem{theorem}{Theorem}[section]
\newtheorem{lemma}[theorem]{Lemma}
\newtheorem{proposition}[theorem]{Proposition}
\newtheorem{corollary}[theorem]{Corollary}

\theoremstyle{definition}
\newtheorem{definition}[theorem]{Definition}
\newtheorem{remark}[theorem]{Remark}
\newtheorem{example}[theorem]{Example}

\numberwithin{equation}{section}
\protected\def\ignorethis#1\endignorethis{}
\let\endignorethis\relax

\title{Stability theory for semigroups using $(L^{p},L^{q})$ Fourier multipliers}

\author{Jan Rozendaal}
\address{Mathematical Sciences Institute\\ Australian National University\\ Acton ACT 2601\\ Australia\\ and Institute of Mathematics, Polish Academy of Sciences\\
ul.~\'{S}niadeckich 8\\
00-656 Warsaw\\
Poland}
\email{janrozendaalmath@gmail.com}

\author{Mark Veraar}
\address{Delft Institute of Applied
Mathematics\\
Delft University of Technology\\
P.O.~Box 5031\\
2628 CD Delft\\
The Netherlands}
\email{M.C.Veraar@tudelft.nl}

\keywords{$C_{0}$-semigroup, polynomial stability, exponential stability, Fourier multiplier, Fourier type, type and cotype, convex and concave Banach lattice, $R$-boundedness}

\subjclass[2010]{Primary: 47D06. Secondary: 34D05, 35B40, 42B15, 46B20}

\thanks{The first author is partially supported by grant DP160100941 of the Australian Research Council. The second author is supported by the VIDI subsidy 639.032.427 of the Netherlands Organisation for Scientific Research (NWO)}

\begin{document}
\begin{abstract}
We study polynomial and exponential stability for $C_{0}$-semigroups using the recently developed theory of operator-valued $(L^{p},L^{q})$ Fourier multipliers. We characterize polynomial decay of orbits of a $C_{0}$-semigroup in terms of the $(L^{p},L^{q})$ Fourier multiplier properties of its resolvent. Using this characterization we derive new polynomial decay rates which depend on the geometry of the underlying space. We do not assume that the semigroup is uniformly bounded, our results depend only on spectral properties of the generator.\\
As a corollary of our work on polynomial stability we reprove and unify various existing results on exponential stability, and we also obtain a new theorem on exponential stability for positive semigroups.
\end{abstract}

\maketitle

\setcounter{tocdepth}{1}

\section{Introduction}\label{sec:introduction}

In this article we study the asymptotic behavior of solutions to the abstract Cauchy problem
\begin{equation}\label{eq:Cauchyintro}
\begin{aligned}
u'(t) + A u(t) &= 0,\quad t\geq 0,
\\  u(0) &= x.
\end{aligned}
\end{equation}
Here $-A$ is the generator of a $C_0$-semigroup $(T(t))_{t\geq 0}$ on a Banach space $X$ and $x\in X$. The unique solution of \eqref{eq:Cauchyintro} with initial data $x$ is given by $u(t) = T(t) x$ for $t\geq 0$.
One of the key difficulties in the asymptotic theory for solutions of \eqref{eq:Cauchyintro} is that the classical Lyapunov stability criterion is in general not valid if $X$ is infinite dimensional. However, asymptotic behavior can be deduced from the associated resolvent operators $R(\lambda,A) = (\lambda-A)^{-1}$ for $\lambda\in \rho(A)$. For example, on a Hilbert space $X$ the Gearhart-Pr\"{u}ss theorem \cite[Theorem 5.2.1]{ArBaHiNe11} states that
$(T(t))_{t\geq0}$ is exponentially stable if and only if $\sigma(A)\subset\C_+$ and $\sup_{\Real(\lambda)<0} \|R(\lambda, A)\|<\infty$. A uniform bound for the resolvent is not sufficient to ensure exponential stability on general Banach spaces, but it was shown in \cite{Hieber01, Latushkin-Shvydkoy01} (see also \cite{Kaashoek-VerduynLunel94,ClLaMoRa00, Latushkin-Rabiger05, Weis97}) that exponential stability can be characterized in terms of $L^{p}$ Fourier multiplier properties of the resolvent.
Outside of Hilbert spaces this multiplier condition is a strictly stronger assumption than uniform boundedness, and in applications it can be difficult to verify. On the other hand, cf.~\cite{vNeStWe95, Weis95, Weis97, vanNeerven09}, uniform bounds for the resolvent do imply exponential stability for orbits in fractional domains, with the fractional domain parameter depending on the geometry of the underlying space. At the moment it is not fully understood how the characterization of exponential stability using Fourier multipliers is related to such concrete decay results.

In a separate development, over the past decade much attention has been paid to polynomial decay of semigroup orbits. The work of Lebeau \cite{Lebeau96,Lebeau-Robbiano97} and Burq \cite{Burq98} on energy decay for damped wave equations raised the question what the precise relation is between growth rates for the resolvent and decay rates for the semigroup. More precisely, if one has $\sigma(A)\subset\C_{+}$ in \eqref{eq:Cauchyintro} but $\|R(i\xi,A)\|\to\infty$ as $|\xi|\to\infty$, then $(T(t))_{t\geq0}$ is not exponentially stable and one typically encounters other asymptotic behavior. Since a uniform rate of decay for all solutions to \eqref{eq:Cauchyintro} implies exponential stability of the semigroup, one can expect uniform asymptotic behavior only for orbits in suitable subspaces such as fractional domains, and in general the smoothness parameter of the fractional domain influences the decay behavior. In \cite{BaEnPrSc06} B{\'a}tkai, Engel, Pr{\"u}ss and Schnaubelt proved that for uniformly bounded semigroups a polynomial growth rate of the resolvent implies a specific polynomial decay rate for classical solutions of \eqref{eq:Cauchyintro} and vica versa, and they showed that this correspondence is optimal up to an arbitrarily small polynomial loss. In \cite{Batty-Duyckaerts08} Batty and Duyckaerts extended this correspondence to the setting of arbitrary resolvent growth and they reduced the loss to a logarithmic scale. Then Borichev and Tomilov proved in \cite{Borichev-Tomilov10} that this logarithmic loss is sharp on general Banach spaces, but that it can be removed on Hilbert spaces in the case of polynomial resolvent growth. In particular, on Hilbert spaces this yields a characterization of polynomial stability in terms of the growth of the resolvent. This result has been applied extensively in the study of partial differential equations (see e.g.\ \cite{AmFeNi14, Anantharaman-Leautaud14,BaPaSe16,CaCaTe17,Hao-Liu13,Leautaud-Lerner17,Liu-Rao05, SaAlMu12} and references therein) and has been extended in \cite{BaChTo16,Martinez11, Chill-Seifert16, Seifert15a,Stahn17b,Stahn17e,RoSeSt17} to finer scales of resolvent growth and semigroup decay.

Although much work has gone into determining the relation between resolvent growth and polynomial rates of decay, it is not clear how such asymptotic behavior relates to the Fourier analytic properties of the resolvent which characterize exponential stability. Furthermore, the currently available literature on polynomial decay deals almost exclusively with uniformly bounded semigroups. To the best of our knowledge, the only previously known result concerning polynomial decay for general semigroups is \cite[Proposition 3.4]{BaEnPrSc06}. There are many natural classes of examples where the generator has spectral properties as above but the semigroup is not uniformly bounded, or where it is unknown whether the semigroup is bounded. Typical examples of this phenomenon can be found in Section \ref{subsec:comparison, problems and examples} and include semigroups whose generator is an operator matrix or a multiplication operator on a Sobolev space. In turn, such operators can be found in disguise in concrete partial differential equations. One example is the standard wave equation with periodic boundary conditions; here uniform boundedness fails. Other examples can be found in \cite{Paunonen14} for certain classes of perturbed wave equations and in \cite{Sklyar-Polak17} for delay equations. For infinite systems of equations the uniform boundedness condition leads to additional assumptions on the coefficients in \cite{Paunonen-Seifert17}.

In this article we deal with the problems outlined above in three ways. First, we characterize polynomial stability on general Banach spaces in terms of Fourier multiplier properties of powers of the resolvent, in Theorem \ref{thm:abstract polynomial stability}. In doing so we extend the Fourier analytic characterization of exponential stability to this more refined setting. Then, using the theory of operator-valued $(L^{p},L^{q})$ Fourier multipliers which was developed in \cite{Rozendaal-Veraar17a,Rozendaal-Veraar17b} with applications to stability theory in mind, we derive concrete polynomial decay rates from this characterization. These results involve only growth bounds for the resolvent and are new even on Hilbert spaces. In particular, the following theorem can be found in the main text as Corollary \ref{cor:polynomial decay Hilbert space}.

\begin{theorem}\label{thm:polynomial decay HilbertIntro}
Let $-A$ be the generator of a $C_{0}$-semigroup $(T(t))_{t\geq0}$ on a Hilbert space $X$ such that $\sigma(A)\subset\C_{+}$ and $\|R(\lambda,A)\|\leq C(1+|\lambda|)^{\beta}$ for some $\beta>0$, $C\geq 0$ and all $\lambda\in\C$ with $\Real(\lambda)\leq 0$. Then for each $\tau\geq \beta$ there exists a $C_{\tau}\geq0$ such that
\begin{equation}\label{eq:polynomial decay Hilbert space mainintro}
\|T(t)A^{-\tau}\|\leq C_{\tau}t^{1-\tau/\beta}\qquad (t\in[1,\infty)).
\end{equation}
\end{theorem}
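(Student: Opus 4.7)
The plan is to derive Corollary~\ref{cor:polynomial decay Hilbert space} (i.e.\ Theorem~\ref{thm:polynomial decay HilbertIntro}) as a specialization of the abstract Fourier multiplier characterization of polynomial stability from Theorem~\ref{thm:abstract polynomial stability} to the Hilbert space setting. The crucial feature of Hilbert spaces is that, by Plancherel's theorem, an operator-valued symbol $m\colon\R\to\La(X)$ is an $(L^{2},L^{2})$ Fourier multiplier if and only if $m\in L^{\infty}(\R;\La(X))$. Consequently, after choosing $p=q=2$ in the abstract theorem, the multiplier hypothesis collapses to a pointwise bound on a resolvent-based symbol, which can be read off directly from the assumed polynomial resolvent growth.

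First, I would record that under the hypothesis $\sigma(A)\subset\C_{+}$ and $\|R(\lambda,A)\|\leq C(1+|\lambda|)^{\beta}$ on $\{\Real(\lambda)\leq0\}$, the resolvent is holomorphic in a neighbourhood of $i\R$ with $\|R(i\xi,A)\|\leq C(1+|\xi|)^{\beta}$ for all $\xi\in\R$. Next, I would invoke Theorem~\ref{thm:abstract polynomial stability}, which encodes polynomial decay $\|T(t)A^{-\tau}\|\leq C_{\tau}t^{-\alpha}$ in terms of an $(L^{p},L^{q})$ multiplier bound on a symbol of the form $\xi\mapsto R(i\xi,A)$ weighted by a power of $(1+|\xi|)^{-1}$ that accounts for both the fractional domain factor $A^{-\tau}$ and the desired temporal decay $t^{-\alpha}$. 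Specializing to $p=q=2$ on the Hilbert space $X$, this multiplier condition reduces, via Plancherel, to uniform boundedness of a symbol whose norm grows like $(1+|\xi|)^{\beta-\tau-\alpha\beta}$.

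The trade-off is then transparent: this quantity is uniformly bounded exactly when $\alpha\leq\tau/\beta-1$, and maximizing $\alpha$ gives the exponent $\alpha=\tau/\beta-1$, so $\|T(t)A^{-\tau}\|\leq C_{\tau}t^{-(\tau/\beta-1)}=C_{\tau}t^{1-\tau/\beta}$ on $[1,\infty)$. The endpoint $\tau=\beta$ yields only a uniform bound $\|T(t)A^{-\beta}\|\leq C_{\beta}$, which is consistent with the fact that, unlike the Borichev--Tomilov setting, the semigroup itself is not assumed to be uniformly bounded; losing boundedness precisely costs one unit of decay in the exponent compared with the classical polynomial stability result on Hilbert spaces.

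The main technical obstacle is not any individual estimate but the careful bookkeeping needed to match the abstract multiplier parameters to this scalar growth calculation: one has to identify the correct weight attached to $A^{-\tau}$ via the holomorphic functional calculus, ensure that the polynomial weights $(1+|\xi|)^{-s}$ are indeed $(L^{2},L^{2})$ Fourier multipliers on $X$, and handle the boundary regime $\tau\downarrow\beta$ where the abstract theorem yields only boundedness rather than strict decay. Once these bookkeeping issues are resolved, the Hilbert space case of Theorem~\ref{thm:abstract polynomial stability} gives the conclusion essentially by inspection.
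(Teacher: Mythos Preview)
Your overall strategy—specialize Theorem~\ref{thm:abstract polynomial stability} to $p=q=2$ and use Plancherel to reduce the multiplier hypothesis to an $L^{\infty}$ bound on the symbol—is exactly the route the paper takes (via Theorem~\ref{thm:polynomial decay Fourier type} and Corollary~\ref{cor:polynomial decay Hilbert space}). Two points in your sketch, however, need more care than you give them.

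First, the characterization in Theorem~\ref{thm:abstract polynomial stability} involves \emph{powers} $R(\ui\cdot,A)^{k}$ for $k\in\{n-1,n,n+1\}\cap\N$, not a single resolvent carrying a scalar weight. The temporal decay exponent $n$ enters through these operator powers, not through a factor $(1+|\xi|)^{-n\beta}$. Your informal expression $(1+|\xi|)^{\beta-\tau-\alpha\beta}$ does not arise from the actual symbol, and with the sign as written it gives the wrong condition. What one actually needs is $\sup_{\xi}\|R(\ui\xi,A)^{k}\|_{\La(X_{\tau},X)}<\infty$ for each such $k$, and the binding constraint is at $k=n+1$, yielding $\tau\geq(n+1)\beta$; non-integer $\rho$ then follow from the interpolation Lemma~\ref{lem:interpolation}.

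Second—and this is the real technical content—you call ``bookkeeping'' the passage from $\|R(\ui\xi,A)\|_{\La(X)}\lesssim(1+|\xi|)^{\beta}$ to a uniform bound on $\|R(\ui\xi,A)\|_{\La(X_{\beta},X)}$. This is precisely Proposition~\ref{prop:app-equivalence growth bounds}, and it is not a triviality: it requires a contour-integral argument in the sectorial functional calculus to show that $(\lambda+A)^{-1}(1+A)^{-\beta}$ differs from $(1-\lambda)^{-\beta}(\lambda+A)^{-1}$ by a uniformly bounded family. Without this, there is no direct way to turn the scalar growth of the resolvent into the $\La(X_{\tau},X)$ bound that Plancherel demands. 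Once Proposition~\ref{prop:app-equivalence growth bounds} is in hand and iterated across the resolvent factors (using that the part of $A$ in $X_{n\beta}$ again has the same resolvent growth), the Hilbert space case does indeed reduce to inspection—but that proposition is the substance, not the bookkeeping.
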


Note that we do not assume that the semigroup is uniformly bounded. In fact, we show that one can derive polynomial decay behavior for initial values in suitable fractional domains given only spectral properties of the generator. In particular, by setting $\tau=\beta$ in Theorem \ref{thm:polynomial decay HilbertIntro} one obtains uniform boundedness of sufficiently smooth solutions. For uniformly bounded semigroups the parameter $1-\tau/\beta$ in \eqref{eq:polynomial decay Hilbert space mainintro} can be replaced by $-\tau/\beta$, as was shown in \cite{Borichev-Tomilov10}, but in Example \ref{ex:optimalitybeta} we prove that $1-\tau/\beta$ is optimal for general semigroups if $\tau=\beta$. Our main theorems allow for $A$ to have a singularity at zero, or even singularities at both zero and infinity. We also obtain versions of Theorem \ref{thm:polynomial decay HilbertIntro} on other Banach spaces; the decay rate in \eqref{eq:polynomial decay Hilbert space mainintro} then depends on the geometry of the underlying space.

Finally, as a direct corollary of our results on polynomial stability we recover in a unified manner various results on exponential stability from \cite{Hieber01,Latushkin-Shvydkoy01,vNeStWe95, Weis95, Weis97, vanNeerven09}. We also obtain a new stability result for positive semigroups, Theorem \ref{thm:exponential stability convex}.

To prove our main results we rely on the theory of operator-valued Fourier multipliers from $L^p(\R;X)$ to $L^q(\R;Y)$, for $X$ and $Y$ Banach spaces. A Fourier multiplier characterization of exponential stability for general $p\in[1,\infty)$ and $q\in[p,\infty]$ was known from \cite{Latushkin-Shvydkoy01}, but so far only the case where $p=q$ has been used (see \cite{BaFaSh03,Hieber99,Hieber01,Latushkin-Rabiger05, Latushkin-Shvydkoy01,Weis97}). Although in this setting very powerful multiplier theorems are available, see for example Weis' version of the Mikhlin multiplier theorem in \cite{Weis2001} and \cite{DeHiPr03,HyNeVeWe16,Kunstmann-Weis04}, the assumptions of these theorems are in general too restrictive for applications to stability theory. Indeed, multiplier theorems on $L^{p}(\R;X)$ typically require both a geometric assumption on $X$, namely the UMD condition which excludes spaces of interest such as $X=L^{1}$, as well as smoothness of the multiplier and comparatively fast decay at infinity of its derivative. The latter assumption in particular is not satisfied in most applications to stability theory.

In this article we argue that for the study of asymptotic behavior it is more natural to consider general $p\in[1,\infty)$ and $q\in[p,\infty]$. It was observed in \cite{Rozendaal-Veraar17a, Rozendaal-Veraar17b} that one can derive boundedness of Fourier multipliers from $L^{p}(\R;X)$ to $L^{q}(\R;Y)$ for $p< q$ under different geometric assumptions on $X$ and $Y$ than in the case where $p=q$, and assuming decay of the multiplier at infinity but no smoothness. In fact, the parameters $p$ and $q$ depend on the geometry of $X$, and the amount of decay which is required at infinity is proportional to $\frac{1}{p}-\frac{1}{q}$. Moreover, in Section \ref{subsec:resolvent estimates} we prove that growth of the resolvent on $X$ corresponds to uniform boundedness, and in fact even decay, of the resolvent from suitable fractional domain and range spaces to $X$. Then one can determine for which fractional domain and range parameters the conditions of the $(L^{p},L^{q})$ multiplier theorems are satisfied for (powers of) the resolvent, and the Fourier multiplier characterizations of stability in Theorems \ref{thm:abstract polynomial stability} and \ref{thm:abstract stability result} yield the corresponding asymptotic behavior. We emphasize that, although we use Fourier multiplier techniques for the proofs, our main theorems on concrete decay rates involve only growth bounds on the resolvent.

This article is organized as follows. In Section \ref{sec:preliminaries} we present some basics on Banach space geometry, Fourier multipliers and sectorial operators. In Section \ref{sec:resolvents and multipliers} we deduce multiplier properties of the resolvent and we prove Proposition \ref{prop:app-equivalence growth bounds} and Corollary \ref{cor:app-growth and fractional domains}. These are fundamental in later sections for relating resolvent growth on $X$ to boundedness and decay from fractional domain and range spaces to $X$. In Section \ref{sec:polynomial stability} we study polynomial decay of semigroups. We characterize polynomial stability using Fourier multipliers, and from this characterization we deduce concrete polynomial decay rates which depend on the geometry of the underlying space. In Section \ref{sec:exponential stability} we derive from these results various corollaries on exponential decay. We also prove a characterization of exponential stability using multipliers on Besov spaces, which in turn is used to obtain a new stability result for positive semigroups. An appendix contains estimates for contour integrals and exponential functions.

\subsection{Notation}\label{subsec:notation}

\medskip

The set of natural numbers is $\N=\{1,2,\ldots\}$, and $\N_{0}:=\N\cup\{0\}$. We denote by $\C_{+}=\{\lambda\in\C\mid \Real(\lambda)>0\}$ and $\C_{-}=-\C_{+}$ the open complex right and left half-planes.

Nonzero Banach spaces over the complex numbers are denoted by $X$ and $Y$. The space of bounded linear operators from $X$ to $Y$ is $\La(X,Y)$, and $\La(X):=\La(X,X)$. The identity operator on $X$ is denoted by $\I_{X}$, and we usually write $\lambda$ for $\lambda\I_{X}$ when $\lambda\in\C$. The domain of a closed operator $A$ on $X$ is $\D(A)$, a Banach space with the norm
\begin{align*}
\norm{x}_{\D(A)}:=\norm{x}_{X}+\norm{Ax}_{X}\qquad (x\in \D(A)).
\end{align*}
For an injective closed operator $A$ we identify the range $\text{ran}(A)$ of $A$ with the Banach space $D(A^{-1})$. The spectrum of $A$ is $\sigma(A)$ and the resolvent set is $\rho(A)=\C\setminus \sigma(A)$. We write $R(\lambda,A)=(\lambda -A)^{-1}$ for the resolvent operator of $A$ at $\lambda\in\rho(A)$.

For $p\in[1,\infty]$ and $\Omega$ a measure space, $\Ellp(\Omega;X)$ is the Bochner space of equivalence classes of strongly measurable, $p$-integrable, $X$-valued functions on $\Omega$.
The H\"{o}lder conjugate of $p\in[1,\infty]$ is denoted by $p'$ and is defined by $1=\frac{1}{p}+\frac{1}{p'}$.

The class of $X$-valued Schwartz functions on $\R$ is denoted by $\Sw(\R;X)$, and the space of $X$-valued tempered distributions by $\Sw'(\R;X)$. The Fourier transform of $f\in\Sw'(\R;X)$ is denoted by $\F f$ or $\widehat{f}$. If $f\in\Ell^{1}(\R;X)$ then
\begin{align*}
\F f(\xi)=\int_{\R}\ue^{-\ui \xi t}f(t)\,\ud t\qquad (\xi\in\R).
\end{align*}

We use the convention that $\tfrac{1}{0}=\infty$ and $\tfrac{0}{0}=\infty$.

For sets $S$ and $Z$ we occasionally denote a function $f:S\to Z$ of a variable $s$ simply by $f=f(s)$. We use the notation $f(s)\lesssim g(s)$ for functions $f,g:S\to\R$ to indicate that $f(s)\leq Cg(s)$ for all $s\in S$ and a constant $C\geq0$ independent of $s$, and similarly for $f(s)\gtrsim g(s)$. We write $f(s)\eqsim g(s)$ if $g(s)\lesssim f(s)\lesssim g(s)$ holds.

\section{Preliminaries}\label{sec:preliminaries}

\subsection{Banach space geometry}\label{subsec:banach space geometry}

Here we collect some background on Banach space geometry which is used for our results on non-Hilbertian Banach spaces.

A Banach space $X$ has \emph{Fourier type} $p\in[1,2]$ if the Fourier transform $\F$ is bounded from $\Ellp(\R;X)$ to $\Ellpprime(\R;X)$. We then set $\F_{p,X}:=\|\F\|_{\La(\Ell^{p}(\R;X),\Ell^{p'}(\R;X))}$. To make our multiplier theorems more transparent, we say that $X$ has \emph{Fourier cotype} $q\in[2,\infty]$ if $X$ has Fourier type $q'$. Each Banach space has Fourier type $1$, and $X$ has Fourier type $2$ if and only if $X$ is isomorphic to a Hilbert space. For $r\in[1,\infty]$ and $\Omega$ a measure space, $\Ellr(\Omega)$ has Fourier type $\min(r,r')$. For more on Fourier type see \cite{Pietsch-Wenzel98, HyNeVeWe16}.

A (real) Rademacher variable is a random variable $r:\Omega\to\{-1,1\}$ on a probability space $(\Omega,\mathbb{P})$ such that $\mathbb{P}(r=-1)=\mathbb{P}(r=1)=\frac{1}{2}$. A \emph{Rademacher sequence} is a sequence $(r_{k})_{k\geq 1}$ of independent Rademacher variables on some probability space.

Let $(r_{k})_{k\geq 1}$ be a Rademacher sequence on a probability space $(\Omega,\mathbb{P})$. A Banach space $X$ has \emph{type} $p\in[1,2]$ if there exists a constant $C\geq0$ such that for all $n\in\N$ and all $x_{1},\ldots, x_{n}\in X$ one has
\[
\Big(\mathbb{E}\Big\|\sum_{k=1}^{n}r_{k}x_{k}\Big\|^{2}\Big)^{1/2}\leq C\Big(\sum_{k=1}^{n}\|x_{k}\|^{p}\Big)^{1/p}\!.
\]
Also, $X$ has \emph{cotype} $q\in[2,\infty]$ if there exists a constant $C\geq0$ such that for all $n\in\N$ and all $x_{1},\ldots, x_{n}\in X$ one has
\[
\Big(\sum_{k=1}^{n}\|x_{k}\|^{q}\Big)^{1/q}\leq C\Big(\mathbb{E}\Big\|\sum_{k=1}^{n}r_{k}x_{k}\Big\|^{2}\Big)^{1/2}\!,
\]
with the obvious modification for $q=\infty$. We say that $X$ has \emph{nontrivial type} if $X$ has type $p\in(1,2]$, and \emph{finite cotype} if $X$ has cotype $q\in[2,\infty)$. Each Banach space has type $p=1$ and cotype $q=\infty$, and $X$ has type $p=2$ and cotype $q=2$ if and only if $X$ is isomorphic to a Hilbert space, by Kwapie\'{n}'s theorem \cite{Kwapien72}. For $r\in[1,\infty)$ and $\Omega$ a measure space, $\Ellr(\Omega)$ has type $\min(r,2)$ and cotype $\max(r,2)$. For more on type and cotype see \cite{DiJaTo95,HyNeVeWe2}.

Let $X$ be a Banach lattice and $p,q\in[1,\infty]$. We say that $X$ is \emph{$p$-convex} if there exists a constant $C\geq0$ such that for all $n\in\N$ and all $x_{1},\ldots, x_{n}\in X$ one has
\[
\Big\|\Big(\sum_{k=1}^{n}\abs{x_{k}}^{p}\Big)^{1/p}\Big\|_{X}\leq C\Big(\sum_{k=1}^{n}\|x_{k}\|_{X}^{p}\Big)^{1/p}\!,
\]
with the obvious modification for $p=\infty$. We say that $X$ is \emph{$q$-concave} if there exists a constant $C\geq0$ such that for all $n\in\N$ and all $x_{1},\ldots, x_{n}\in X$ one has
\[
\Big(\sum_{k=1}^{n}\|x_{k}\|_{X}^{q}\Big)^{1/q}\leq C\Big\|\Big(\sum_{k=1}^{n}\abs{x_{k}}^{q}\Big)^{1/q}\Big\|_{X},
\]
with the obvious modification for $q=\infty$. Each Banach lattice $X$ is $1$-convex and $\infty$-concave.
For $r\in[1,\infty]$ and $\Omega$ a measure space, $\Ellr(\Omega)$ is $r$-convex and $r$-concave. For more on $p$-convexity and $q$-concavity we refer the reader to \cite{Lindenstrauss-Tzafriri79,GaToKa96}.

Let $X$ and $Y$ be Banach spaces and $\mathcal{T}\subseteq \La(X,Y)$. We say that $\mathcal{T}$ is \emph{$R$-bounded}
if there exists a constant $C\geq0$ such that for all $n\in\N$, $T_{1},\ldots, T_{n}\in\mathcal{T}$ and $x_{1},\ldots, x_{n}\in X$ one has
\begin{equation}\label{eq:R-boundedness}
\Big(\mathbb{E}\Big\|\sum_{k=1}^{n}r_{k}T_{k}x_{k}\Big\|_{Y}^{2}\Big)^{1/2}\leq
C\Big(\mathbb{E}\Big\|\sum_{k=1}^{n}r_{k}x_{k}\Big\|_{X}^{2}\Big)^{1/2}\!.
\end{equation}
The smallest such $C$ is the \emph{$R$-bound} of $\mathcal{T}$ and is denoted by $R(\mathcal{T})$.
If we want to specify the underlying spaces $X$ and $Y$ then we write $R_{X,Y}(\mathcal{T})$ for the $R$-bound of $\mathcal{T}$, and we write $R_{X}(\mathcal{T})=R_{X,Y}(\mathcal{T})$ if $X=Y$. Every $R$-bounded collection is uniformly bounded with supremum bound less than or equal to its $R$-bound, and the converse holds if and only if $X$ has cotype $2$ and $Y$ has type $2$.
For $\lambda\in\C$ and an $R$-bounded collection $\mathcal{T}\subseteq\La(X,Y)$, the closed absolutely convex hull $\overline{\text{aco}}(\lambda\mathcal{T})\subseteq\La(X,Y)$ of $\lambda\mathcal{T}=\{\lambda T\mid T\in\mathcal{T}\}$ is $R$-bounded, and
\begin{equation}\label{eq:Kahane contraction}
R_{X,Y}(\overline{\text{aco}}(\lambda\mathcal{T}))\leq 2\abs{\lambda}R_{X,Y}(\mathcal{T}).
\end{equation}
In particular, $L^{1}$-averages of $R$-bounded collections are again $R$-bounded, a fact which will be used frequently. For more on $R$-boundedness see \cite{Kunstmann-Weis04, HyNeVeWe2, vanNeerven10}.

The following lemma is used in the proof of Corollary \ref{cor:R-boundedness for free}. It can also be deduced from a corresponding statement in \cite[Theorem 5.1]{Hytonen-Veraar09} for the Besov space $B^{1/r}_{r,1}(\R;\calL(X,Y))$. Here we give a more direct proof. For $r\in[1,\infty]$ and $E$ a Banach space we denote by $W^{1,r}(\R;E)$ the Sobolev space of weakly differentiable $f:\R\to E$ such that $f,f'\in L^{r}(\R;E)$, with $\|f\|_{W^{1,r}(E)}:=\|f\|_{L^{r}(\R;E)}+\|f'\|_{L^{r}(\R;E)}$.

\begin{lemma}\label{lem:rbddsuff}
Let $X$ be a Banach space with cotype $q\in[2,\infty)$ and $Y$ a Banach space with type $p\in[1,2]$, and let $r\in[1,\infty]$ be such that $\frac{1}{r} = \frac1p-\frac1q$. Then there exists a constant $C\in[0,\infty)$ such that for all $f\in W^{1,r}(\R;\calL(X,Y))$ the set $\{f(t)\mid t\in \R\}\subseteq\La(X,Y)$ is $R$-bounded, with
\[
R(\{f(t)\mid t\in \R\})\leq C \|f\|_{W^{1,r}(\R;\calL(X,Y))}.
\]
\end{lemma}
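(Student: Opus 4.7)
The plan is to estimate the $R$-sum $\E\|\sum_{k=1}^n r_k f(t_k) x_k\|_Y$ for arbitrary $n \in \N$, $t_1,\ldots,t_n \in \R$, and $x_1,\ldots,x_n \in X$, aiming for the bound
\[
\E\Big\|\sum_{k=1}^n r_k f(t_k) x_k\Big\|_Y \;\leq\; C\,\|f\|_{W^{1,r}(\R;\calL(X,Y))}\, \E\Big\|\sum_{k=1}^n r_k x_k\Big\|_X.
\]
Repetitions among the $t_k$ may be absorbed via Kahane's contraction principle, so I may assume $t_1 < \cdots < t_n$. The endpoint $r = \infty$ is immediate: the exponent relation together with $p \in [1,2]$ and $q \in [2,\infty)$ forces $p = q = 2$, and under cotype $2$ on $X$ together with type $2$ on $Y$ every uniformly bounded family in $\calL(X,Y)$ is $R$-bounded with comparable constant, so $\|f\|_{L^\infty} \leq \|f\|_{W^{1,\infty}}$ settles the case.

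For $r \in [1,\infty)$, the one-dimensional Sobolev embedding $W^{1,r}(\R;\calL(X,Y)) \hookrightarrow C_0(\R;\calL(X,Y))$ makes available the fundamental theorem of calculus representation $f(t) = \int_{-\infty}^t f'(s)\,ds$, which I substitute into the $R$-sum to get
\[
\sum_{k=1}^n r_k f(t_k) x_k \;=\; \int_{\R} f'(s)\, y(s)\, ds, \qquad y(s) := \sum_{k:\, t_k > s} r_k x_k.
\]
Here $y$ is piecewise constant, equal to the tail Rademacher sum $\sum_{k \geq j} r_k x_k$ on $(t_{j-1}, t_j]$ (with $t_0 := -\infty$), and by Kahane's contraction principle $\|y(s)\|_{L^2(\Omega;X)}$ is dominated by $\|\sum_k r_k x_k\|_{L^2(\Omega;X)}$ uniformly in $s$. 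The core estimate then chains the type $p$ inequality $\E\|\sum_j r_j z_j\|_Y \lesssim (\sum_j \|z_j\|_Y^p)^{1/p}$, Hölder's inequality with the split $\frac{1}{p} = \frac{1}{r} + \frac{1}{q}$, and the cotype $q$ inequality $(\sum_j \|u_j\|_X^q)^{1/q} \lesssim \E\|\sum_j r_j u_j\|_X$, organized so that $f'$ enters as an $L^r$-norm and the $x_k$'s as an $\ell^q$-sum that is subsequently converted back into the $R$-norm on the right-hand side. A boundary $L^\infty$ contribution produced by the FTC representation is absorbed via the one-dimensional Sobolev embedding $\|f\|_\infty \lesssim \|f\|_{W^{1,r}}$.

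The main obstacle is that the naive application of type $p$ directly to $\sum_k r_k (f(t_k) x_k)$, followed by Hölder, would require $\bigl(\sum_k \|f(t_k)\|_{\calL(X,Y)}^r\bigr)^{1/r} \lesssim \|f\|_{W^{1,r}}$, which fails whenever the $t_k$'s cluster near a single point — a configuration invisible to norms of $f$ itself. The integral representation in $f'$ circumvents this by replacing pointwise values of $f$ with integrals of $f'$ over the disjoint intervals $(t_{j-1}, t_j]$; on each such interval the partial Rademacher sum $y(s)$ is constant and controlled by cotype without $n$-dependence, and the $\ell^r$-accounting is transferred from $f$ onto $f'$, for which $\|f'\|_{L^r}$ is the genuinely available bound. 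Arranging the resulting decomposition so that $\|f'\|_{L^r}$ emerges as the dominant term, with only a harmless boundary contribution from $\|f\|_\infty$, is the technical heart of the argument.
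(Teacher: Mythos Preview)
Your proposal has a genuine gap at the ``core estimate''. The chain you describe --- apply type $p$ to $\sum_k r_k z_k$, then H\"older with $\tfrac1p=\tfrac1r+\tfrac1q$, then cotype $q$ --- can only be run with $z_k=f(t_k)x_k$, since that is the sole Rademacher sum present. But this is precisely the naive estimate you yourself reject: it produces the factor $(\sum_k\|f(t_k)\|^r)^{1/r}$, which is not controlled by $\|f\|_{W^{1,r}}$. The Abel/integral representation $\sum_k r_k f(t_k)x_k=\sum_{j}\Delta_j S_j$ with $\Delta_j=f(t_j)-f(t_{j-1})$ and $S_j=\sum_{k\geq j}r_kx_k$ does not help here, because this sum carries \emph{no Rademacher structure in $j$}: type $p$ cannot be applied to it, and the triangle inequality only recovers $\|f'\|_{L^1}$, not $\|f'\|_{L^r}$. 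Nothing in the proposal explains how the $L^r$-norm of $f'$ is supposed to emerge, and Kahane contraction on $y(s)$ gives only the $n$-independent bound $\|y(s)\|_{L^2(\Omega;X)}\lesssim\|\sum_k r_k x_k\|_{L^2(\Omega;X)}$, which again leads back to $\|f'\|_{L^1}$ via the Bochner integral inequality.

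What is missing is the nontrivial fact that under cotype $q$ on $X$ and type $p$ on $Y$, the $R$-bound of a countable union $\bigcup_j\mathcal{T}_j$ is controlled by $\|(R(\mathcal{T}_j))_j\|_{\ell^r}$ (van~Gaans; also \cite[Proposition~9.1.10]{HyNeVeWe2}). The paper exploits this by partitioning $\R$ into unit intervals $I_j=[j,j+1)$, bounding $R(\{f(t):t\in I_j\})$ by $\|f\|_{W^{1,1}(I_j)}\leq\|f\|_{W^{1,r}(I_j)}$ via the $L^1$-averaging property of $R$-bounds, and then taking the $\ell^r$-sum over $j$ to obtain $\|f\|_{W^{1,r}(\R)}$. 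The type and cotype assumptions enter through this $\ell^r$-union result, \emph{not} through a direct type/cotype estimate on the original Rademacher sum; your proposal applies them in the wrong place.
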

\begin{proof}
Let $f\in W^{1,r}(\R;\La(X,Y))$ and for $j\in\Z$ set $I_j := [j, j+1)$ and $\mathcal{T}_j := \{f(t)\mid t\in I_j\}$. Then \cite[Example 2.18]{Kunstmann-Weis04} and H\"{o}lder's inequality imply
\[
R(\mathcal{T}_j) \lesssim \|f\|_{W^{1,1}(I_j;\calL(X,Y))}\lesssim\|f\|_{W^{1,r}(I_j;\calL(X,Y))}
\]
for all $j\in \Z$. Now \cite[Theorem 3.1]{vanGaans06} (see also \cite[Proposition 9.1.10]{HyNeVeWe2}) shows that $\{f(t)\mid t\in \R\} = \bigcup_{j\in \Z} \mathcal{T}_j$ is $R$-bounded, with
\begin{align*}
R(\{f(t)\mid t\in \R\}) &\lesssim \|(R\big(\mathcal{T}_j)\big)_{j}\|_{\ell^{r}(\Z)}
\lesssim \|\big(\|f\|_{W^{1,r}(I_{j};\La(X,Y))}\big)_{j}\|_{\ell^{r}(\Z)}\\
&\lesssim \|f\|_{W^{1,r}(\R;\calL(X,Y))}.\qedhere
\end{align*}
\end{proof}

By replacing the Rademacher random variables in \eqref{eq:R-boundedness} by Gaussian variables, one obtains the definition of a \emph{$\gamma$-bounded} collection $\mathcal{T}\subseteq\La(X,Y)$. Each $R$-bounded collection is $\gamma$-bounded, and the converse holds if and only if $X$ has finite cotype (see \cite[Theorem 1.1]{KwVeWe14}). We choose to work with $R$-boundedness in this article, both because the notion of $R$-boundedness is more established and because those stability theorems in this article which use $R$-boundedness are only of interest on spaces with finite cotype.

\subsection{Fourier multiplier theorems\label{subsec:Fourierm}}

To properly define Fourier multipliers for symbols with a singularity at zero, we briefly introduce the class of vector-valued homogeneous distributions. For more on these distributions see \cite{Rozendaal-Veraar17b}. For $X$ a Banach space let
\[
\dot{\Sw}(\R;X):=\{f\in\Sw(\R;X)\mid \wh{f}^{\,(k)}(0)=0\text{ for all }k\in\N_{0}\},
\]
endowed with the subspace topology, and let $\dot{\Sw}'(\R;X)$ be the space of continuous linear mappings from $\dot{\Sw}(\R;\C)$ to $X$. Then $\dot{\Sw}(\R;X)$ is dense in $L^{p}(\R;X)$ for all $p\in[1,\infty)$, and $L^{p}(\R;X)$ can be naturally identified with a subspace of $\dot{\Sw}'(\R;X)$ for all $p\in[1,\infty]$.

Let $X$ and $Y$ be Banach spaces. A function $m:\R\setminus\{0\}\to\La(X,Y)$ is \emph{$X$-strongly measurable} if $\xi\mapsto m(\xi)x$ is a strongly measurable $Y$-valued map for each $x\in X$. We say that $m$ is \emph{of moderate growth} if there exist $\alpha\in[0,\infty)$ and $g\in \Ell^1(\R)$ such that
\begin{align*}
|\xi|^{\alpha}(1+\abs{\xi})^{-2\alpha} \|m(\xi)\|_{\La(X,Y)} \leq g(\xi) \qquad (\xi\in\R).
\end{align*}
Let $m:\R\setminus\{0\}\to\La(X,Y)$ be an $X$-strongly measurable map of moderate growth. Then $T_{m}:\dot{\Sw}(\R;X)\to\dot{\Sw}'(\R;Y)$,
\begin{equation}\label{eq:definition multiplier}
T_{m}(f):=\F^{-1}(m\cdot\widehat{f}\,)\qquad (f\in\dot{\Sw}(\R;X)),
\end{equation}
is the \emph{Fourier multiplier operator} associated with $m$. One calls $m$ the \emph{symbol} of $T_{m}$, and we identify symbols which are equal almost everywhere. If $\|m(\cdot)\|_{\La(X,Y)}\in L^{1}_{\text{loc}}(\R)$ then \eqref{eq:definition multiplier} extends to all $f\in \Sw(\R;X)$ and defines an operator $T_{m}:\Sw(\R;X)\to \Sw'(\R;X)$.

For $p\in[1,\infty)$ and $q\in[1,\infty]$ we let $\Ma_{p,q}(\R;\La(X,Y))$ be the set of all $X$-strongly measurable $m:\R\setminus\{0\}\to\La(X,Y)$ of moderate growth such that $T_{m}\in\La(L^{p}(\R;X),L^{q}(\R;Y))$, and
\[
\|m\|_{\Ma_{p,q}(\R;\La(X,Y))}:=\|T_{m}\|_{\La(L^{p}(\R;X),L^{q}(\R;Y))}.
\]
We write $\|\cdot\|_{\Ma_{p,q}}=\|\cdot\|_{\Ma_{p,q}(\R;\La(X,Y))}$ when the spaces $X$ and $Y$ are clear from the context.

We now recall several $(L^{p},L^{q})$ Fourier multiplier results from our earlier work. The first is \cite[Proposition 3.9]{Rozendaal-Veraar17a}.

\begin{proposition}\label{prop:Lp-Lq multipliers Fourier type}
Let $X$ be a Banach space with Fourier type $p\in[1,2]$ and $Y$ a Banach space with Fourier cotype $q \in[2,\infty]$, and let $r\in [1, \infty]$ be such that $\frac{1}{r} = \frac{1}{p} - \frac{1}{q}$. Let $m:\R\setminus\{0\}\to\La(X,Y)$ be an $X$-strongly measurable map such that $\norm{m(\cdot)}_{\La(X,Y)}\in\Ellr(\R)$. Then $m\in \Ma_{p,q}(\R;\La(X,Y))$ and
\begin{equation}\label{eq:Fourier type multipliers}
\|m\|_{\Ma_{p,q}(\R;\La(X,Y))} \leq \frac{1}{2\pi}\F_{p,X}\F_{q',Y}\norm{\|m(\cdot)\|_{\La(X,Y)}}_{\Ellr(\R)}.
\end{equation}
\end{proposition}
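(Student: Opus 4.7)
The plan is to realize $T_{m}$ as the composition $\F^{-1}\circ M_{m}\circ \F$, where $M_{m}$ denotes pointwise multiplication by $m$, and to bound each of the three factors separately on an appropriate scale of $L^{s}$ spaces. Since $\dot{\Sw}(\R;X)$ is dense in $L^{p}(\R;X)$ for $p\in[1,\infty)$, it suffices to prove the inequality for $f\in\dot{\Sw}(\R;X)$ and extend by density. For such $f$ the Fourier transform $\widehat{f}$ vanishes to infinite order at the origin, so the product $m\widehat{f}$ is well-defined pointwise almost everywhere despite the potential singularity of $m$ at $0$.

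I would then chain three estimates. First, the Fourier type $p$ hypothesis on $X$ gives
\[
\|\widehat{f}\|_{L^{p'}(\R;X)}\leq \F_{p,X}\|f\|_{L^{p}(\R;X)}.
\]
Second, the numerological heart of the proof is the Hölder identity
\[
\tfrac{1}{p'}+\tfrac{1}{r}=\bigl(1-\tfrac{1}{p}\bigr)+\bigl(\tfrac{1}{p}-\tfrac{1}{q}\bigr)=1-\tfrac{1}{q}=\tfrac{1}{q'},
\]
which combined with the pointwise bound $\|m(\xi)\widehat{f}(\xi)\|_{Y}\leq\|m(\xi)\|_{\La(X,Y)}\|\widehat{f}(\xi)\|_{X}$ and Hölder's inequality yields
\[
\|m\widehat{f}\|_{L^{q'}(\R;Y)}\leq\bigl\|\|m(\cdot)\|_{\La(X,Y)}\bigr\|_{L^{r}(\R)}\|\widehat{f}\|_{L^{p'}(\R;X)}.
\]
Third, the Fourier cotype $q$ assumption on $Y$ is by definition Fourier type $q'$, and the identity $\F^{-1}g(t)=\frac{1}{2\pi}(\F g)(-t)$ shows that $\F^{-1}$ is bounded from $L^{q'}(\R;Y)$ to $L^{q}(\R;Y)$ with norm $\frac{1}{2\pi}\F_{q',Y}$. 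Composing the three bounds delivers \eqref{eq:Fourier type multipliers}.

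I do not anticipate any serious obstacle; the argument is essentially a vector-valued analogue of the classical Hausdorff--Young-based estimate for scalar Fourier multipliers. The only minor points to verify are that $m\widehat{f}\in L^{q'}(\R;Y)$ is a genuine function (so that $T_{m}f$ is an $L^{q}$-function rather than only a tempered distribution), which follows automatically from the estimate above, and that the endpoint $q=\infty$ collapses to the trivial bound $\|\F^{-1}g\|_{L^{\infty}}\leq\frac{1}{2\pi}\|g\|_{L^{1}}$, consistent with every Banach space having Fourier type $1$.
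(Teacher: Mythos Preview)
Your proposal is correct and is the standard argument; the paper itself does not give a proof of this proposition but simply cites it as \cite[Proposition 3.9]{Rozendaal-Veraar17a}, where precisely this Hausdorff--Young/H\"older factorization $\F^{-1}\circ M_{m}\circ\F$ is used.
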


Our next result follows from \cite[Theorem 4.6 and Remark 4.8]{Rozendaal-Veraar17b} and \cite[Theorem 3.21 and Remark 3.22]{Rozendaal-Veraar17a}.

\begin{proposition}\label{prop:Lp-Lq multipliers type}
Let $X$ be a Banach space with type $p\in[1,2]$ and $Y$ a Banach space with cotype $q\in[2,\infty]$, and let $r\in [1, \infty]$ be such that $\frac{1}{r}>\frac{1}{p} - \frac{1}{q}$. Then there exists a constant $C\in[0,\infty)$ such that the following holds. Let $m:\R\to\La(X,Y)$ be an $X$-strongly measurable map such that $\{(1+\abs{\xi})^{r}m(\xi)\mid \xi\in \R\}\subseteq\La(X,Y)$ is $R$-bounded. Then $m\in \Ma_{p,q}(\R;\La(X,Y))$ and
\begin{equation}\label{eq:multiplier type/cotype}
\|m\|_{\Ma_{p,q}(\R;\La(X,Y))}\leq C R_{X,Y}(\{(1+\abs{\xi})^{r}m(\xi)\mid \xi\in \R\}).
\end{equation}
Moreover, if $X$ is a complemented subspace of a $p$-convex Banach lattice with finite cotype and if $Y$ is a Banach space continuously embedded in a $q$-concave Banach lattice for $q\in[1,\infty)$, then \eqref{eq:multiplier type/cotype} also holds if $\frac{1}{r}=\frac{1}{p}-\frac{1}{q}$.
\end{proposition}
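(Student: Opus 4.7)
The plan is to reduce the multiplier estimate to a dyadic frequency decomposition, handle each piece using the $R$-boundedness hypothesis together with the type and cotype assumptions, and then sum the pieces.

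First I would fix a Littlewood-Paley partition: choose $\phi\in\Ccinf(\R)$ supported in $\{\tfrac12\leq\abs{\xi}\leq 2\}$ with $\sum_{k\in\Z}\phi(2^{-k}\xi)=1$ for $\xi\neq0$, set $m_{k}(\xi):=\phi(2^{-k}\xi)\,m(\xi)$, and write $m=\sum_{k\geq 0}m_{k}+m_{\mathrm{low}}$, where each $m_{k}$ is supported in the dyadic annulus $A_{k}:=\{2^{k-1}\leq\abs{\xi}\leq 2^{k+1}\}$ and $m_{\mathrm{low}}$ collects what remains near the origin. Writing $R:=R_{X,Y}(\{(1+\abs{\xi})^{r}m(\xi)\mid\xi\in\R\})$, Kahane's contraction principle \eqref{eq:Kahane contraction} applied to the hypothesis gives
\[
R_{X,Y}(\{m_{k}(\xi)\mid\xi\in A_{k}\})\lesssim 2^{-kr}R.
\]

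The heart of the proof is the per-piece bound
\[
\|T_{m_{k}}\|_{\La(\Ellp(\R;X),\Ellq(\R;Y))}\lesssim 2^{k(1/p-1/q)}\,R_{X,Y}(\{m_{k}(\xi)\mid\xi\in A_{k}\}).
\]
To obtain it I would rescale to frequency scale one, decompose the input along a Shannon-type sampling expansion adapted to the band $A_{k}$, and then insert Rademacher signs: the $R$-boundedness of the symbol is used to absorb the signs, type $p$ of $X$ controls the input by a Rademacher sum of its samples, and cotype $q$ of $Y$ extracts the output from a Rademacher sum of its values on a dual grid. The volume factor $\abs{A_{k}}\eqsim 2^{k}$ together with the $\ell^{p}/\ell^{q}$ norm equivalence on a $2^k$-point grid produces the loss $2^{k(1/p-1/q)}$, which is of Hausdorff-Young flavor.

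Combining the two estimates yields
\[
\sum_{k\geq 0}\|T_{m_{k}}\|_{\La(\Ellp(\R;X),\Ellq(\R;Y))}\lesssim R\sum_{k\geq 0}2^{-k(r-1/p+1/q)},
\]
which converges precisely under the strict inequality $\tfrac1r>\tfrac1p-\tfrac1q$, and the low-frequency part $m_{\mathrm{low}}$ is even easier since no summation decay is needed. For the endpoint $\tfrac1r=\tfrac1p-\tfrac1q$ this geometric series diverges logarithmically, and the termwise estimate must be replaced by a square-function bound: assuming $X\subseteq X_{0}$ and $Y\subseteq Y_{0}$ with $X_{0}$ a $p$-convex and $Y_{0}$ a $q$-concave Banach lattice, one passes Rademacher averages to pointwise $\ell^{p}$ and $\ell^{q}$ expressions and invokes a vector-valued Littlewood-Paley inequality to collapse the sum in $k$ into a single square function estimate.

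The main obstacle is the per-piece bound: turning a symbol-level $R$-boundedness statement on a single frequency band into a genuine $\Ellp(\R;X)\to\Ellq(\R;Y)$ operator bound with the sharp loss $2^{k(1/p-1/q)}$ is exactly where the full geometric content of type and cotype is consumed. In the endpoint regime the additional difficulty is trading the divergent series for a lattice square function, which is precisely why the extra $p$-convexity and $q$-concavity hypotheses are forced upon us there.
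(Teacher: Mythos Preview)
The paper does not give a proof of this proposition; it simply cites \cite{Rozendaal-Veraar17a,Rozendaal-Veraar17b}. So there is no in-paper argument to compare against. Your dyadic strategy---Littlewood--Paley decomposition, a per-piece $(L^p,L^q)$ bound with loss $2^{k(1/p-1/q)}$, then summation---is indeed the skeleton of the proof in those references, with the endpoint handled by a lattice square-function argument. In that sense your outline is on target.

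Two points deserve attention. First, there is an internal inconsistency in your write-up: from the $R$-boundedness of $\{(1+|\xi|)^{r}m(\xi)\}$ you deduce a per-piece $R$-bound $\lesssim 2^{-kr}R$, leading to the series $\sum_{k\geq 0}2^{-k(r-1/p+1/q)}$. That series converges iff $r>\tfrac1p-\tfrac1q$, \emph{not} iff $\tfrac1r>\tfrac1p-\tfrac1q$ as you then assert. This mismatch stems from what appears to be a typo in the statement (the exponent should be $1/r$, as is consistent with the applications in Theorems~\ref{thm:polynomial decay type} and~\ref{thm:polynomial decay convex} and with Proposition~\ref{prop:positive kernel Lp-Lq multipliers}); with exponent $1/r$ your decay is $2^{-k/r}$ and the convergence condition becomes the stated one. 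You should make this explicit rather than silently switch.

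Second, the per-piece estimate is the real content and your sketch (``Shannon sampling, insert Rademacher signs, use type and cotype'') is too impressionistic to stand as a proof. The standard route in the cited references goes through $\gamma$-radonifying operators: for a symbol supported in a unit-scale annulus one uses the embedding $L^{p}(\R;X)\hookrightarrow\gamma(L^{2}(\R);X)$ on band-limited functions (this is where type $p$ enters), the $\gamma$-multiplier theorem to push $T_{m}$ through $\gamma$-spaces using the $R$-bound, and the embedding $\gamma(L^{2}(\R);Y)\hookrightarrow L^{q}(\R;Y)$ (this is where cotype $q$ enters). A sampling formulation can be made to work, but one has to control the tails of the sinc functions and justify the Plancherel--P\'olya-type norm equivalence in the vector-valued setting; none of this is automatic, and it is precisely here that the geometric hypotheses are consumed.
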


For $s\in\R$ and $p\in[1,\infty]$, the \emph{inhomogeneous Bessel potential space} $\Hr^{s}_{p}(\R;X)$ consists of all $f\in\Sw'(\R;X)$ such that $T_{m_{s}}(f)\in\Ellp(\R;X)$, where $m_{s}(\xi):=(1+|\xi|^{2})^{s/2}$ for $\xi\in\R$. It is a Banach space endowed with the norm
\begin{align*}
\|f\|_{\Hr^{s}_{p}(\R;X)}:=\|T_{m_{s}}(f)\|_{\Ellp(\Rd;X)}\quad (f\in\Hr^{s}_{p}(\R;X)).
\end{align*}
Moreover, $\dot{\Sw}(\Rd;X)\subseteq\Hr^{s}_{p}(\Rd;X)$ is densely embedded for $p<\infty$.

The following proposition is proved in the same way as the corresponding homogeneous version in \cite[Theorem 3.24]{Rozendaal-Veraar17a}. We note that one can often avoid condition \eqref{KcondL1} by using approximation arguments.

\begin{proposition}\label{prop:positive kernel Lp-Lq multipliers}
Let $p\in[1,\infty)$ and $q\in[p,\infty)$. Let $X$ be a $p$-convex Banach lattice with finite cotype and let $Y$ be a $q$-concave Banach lattice, and let $r\in (1, \infty]$ be such that $\frac1r = \frac1p-\frac1q$. Then there exists a constant $C\in[0,\infty)$ such that the following holds. Let $m:\R\to \La(X,Y)$ be such that there exists a $K:\R\to \calL(X,Y)$ satisfying the following conditions:
\begin{enumerate}[(1)]
\item\label{Kcondpos} $K(t)\in\La(X,Y)$ is a positive operator for all $t\in \R$;
\item\label{KcondL1} $K(\cdot)x\in\Ell^{1}(\R;Y)$ for all $x\in X$;
\item\label{KcondF} $\F(K(\cdot)x)(\xi)=m(\xi)x$ for all $x\in X$ and $\xi\in\R$.
\end{enumerate}
Then $T_m:H^{1/r}_p(\R;X)\to L^{q}(\R;Y)$ is bounded and
\begin{align*}
\|T_m\|_{\La(H^{1/r}_p(\R;X),L^{q}(\R;Y))} \leq C\|m(0)\|_{\calL(X,Y)} \leq C\sup_{\xi\in\R}\|m(\xi)\|_{\La(X,Y)}.
\end{align*}
\end{proposition}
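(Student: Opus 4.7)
My plan is to adapt the proof of the homogeneous version \cite[Theorem 3.24]{Rozendaal-Veraar17a} by replacing the Riesz potential with the Bessel potential of order $1/r$. The key structural feature that survives the transition is positivity of the kernel, which reduces the $(L^{p},L^{q})$ bound to a lattice-valued Hardy-Littlewood-Sobolev estimate; the inhomogeneous Bessel kernel is itself positive and integrable, so the same reduction applies.

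First I factor the Bessel potential out of $f$. Let $\beta:\R\to[0,\infty)$ denote the scalar Bessel kernel of order $1/r$, defined by $\wh\beta(\xi)=(1+\xi^{2})^{-1/(2r)}$. Since $1/r\in[0,1)$, $\beta$ is nonnegative, integrable, and $\int_{\R}\beta=1$. For $f\in H^{1/r}_{p}(\R;X)$ I write $f=\beta*g$, where $g:=J_{1/r}f\in L^{p}(\R;X)$ satisfies $\|g\|_{L^{p}(X)}=\|f\|_{H^{1/r}_{p}(X)}$. Setting $\tilde K:=K*\beta$, each $\tilde K(t)\in\La(X,Y)$ is a positive operator thanks to \eqref{Kcondpos} and $\beta\geq 0$; conditions \eqref{KcondL1} and \eqref{KcondF} together with Fubini identify $T_{m}f=\tilde K*g$ and give $\int_{\R}\tilde K(t)x\,dt=m(0)x$ for every $x\in X$.

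Positivity of $\tilde K(t)$ then yields the pointwise lattice bound
\[
|(\tilde K*g)(t)|_{Y}\leq \int_{\R}\tilde K(t-s)\,|g(s)|_{X}\,ds\qquad (t\in\R),
\]
which reduces matters to proving
\[
\Bigl\|\int_{\R}\tilde K(\cdot-s)h(s)\,ds\Bigr\|_{L^{q}(\R;Y)}\leq C\|m(0)\|_{\La(X,Y)}\|h\|_{L^{p}(\R;X)}
\]
for every $h\in L^{p}(\R;X_{+})$. This is exactly the type of estimate handled in the homogeneous setting of \cite[Theorem 3.24]{Rozendaal-Veraar17a}, with $\tilde K$ playing the role of the Riesz kernel convolved with $K$; one invokes the argument from there. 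Schematically, $p$-convexity of $X$ (together with finite cotype) and $q$-concavity of $Y$ allow a Krivine-Calder\'on-Lozanovski type lattice factorization that splits the convolution into a scalar HLS step, in which $\beta$ provides the Sobolev gain from $L^{p}$ to $L^{q}$, and a Young-type step controlled by the total mass $m(0)$ of the positive operator-valued measure $K$.

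The main obstacle is precisely this last step: converting the identity $\int_{\R}\tilde K(t)\,dt=m(0)$ into an $L^{p}(X)\to L^{q}(Y)$ convolution bound that depends only on $\|m(0)\|_{\La(X,Y)}$ rather than on the potentially larger scalar quantity $\int_{\R}\|\tilde K(t)\|_{\La(X,Y)}\,dt$. It is here that the $p$-convex/$q$-concave lattice hypotheses enter essentially, and once that step is in place the final upper bound by $\sup_{\xi\in\R}\|m(\xi)\|_{\La(X,Y)}$ is immediate.
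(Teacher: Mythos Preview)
Your approach is correct and is precisely what the paper intends: it offers no detailed argument but simply states that the proposition ``is proved in the same way as the corresponding homogeneous version in \cite[Theorem 3.24]{Rozendaal-Veraar17a}.'' Your identification of the one substantive change---replacing the Riesz potential by the Bessel potential $\beta$ of order $1/r$, which remains a nonnegative $L^{1}$ function with $\int\beta=1$---is exactly the adaptation required, and from that point on the lattice/positivity machinery of \cite[Theorem 3.24]{Rozendaal-Veraar17a} applies without further modification.
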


\subsection{Sectorial operators}\label{subsec:sectorial operators}

For a $C_{0}$-semigroup $(T(t))_{t\geq 0}\subseteq\La(X)$ on a Banach space $X$ we let
\begin{align*}
\w_{0}(T):=\inf\{\w\in\R\mid \exists M\in[0,\infty): \|T(t)\|_{\La(X)}\leq M\ue^{\w t}\text{ for all }t\in[0,\infty)\}.
\end{align*}
For $\ph\in(0,\pi)$ set
\[
S_{\ph}:=\{z\in \C\setminus\{0\}\mid \abs{\arg(z)}<\ph\},
\]
and let $S_{0}:=(0,\infty)$. Recall that an operator $A$ on a Banach space $X$ is \emph{sectorial of angle} $\ph\in[0,\pi)$ if $\sigma(A)\subseteq \overline{S_{\ph}}$ and if $\sup\{\|\lambda R(\lambda,A)\|_{\La(X)}\mid \lambda\in \C\setminus\overline{S_{\theta}}\}<\infty$ for all $\theta\in(\ph,\pi)$. Then we write $A\in\text{Sect}(\ph,X)$ and we let $\w_{A}:=\min\{\ph\in[0,\pi)\mid A\in \text{Sect}(\ph,X)\}$. An operator $A$ such that
\begin{equation}\label{eq:sectoriality constant}
M(A):=\sup\{\|\lambda(\lambda+A)^{-1}\|_{\La(X)}\mid \lambda\in(0,\infty)\}<\infty
\end{equation}
is sectorial of angle $\ph=\pi-\arcsin(1/M(A))$.

For a sectorial operator $A$ on a Banach space $X$ one has $N(A)\cap \overline{\text{Ran}(A)}=\{0\}$
and, if $X$ is reflexive, $X=N(A)\oplus \overline{\text{Ran}(A)}$. If $-A$ generates a $C_{0}$-semigroup $(T(t))_{t\geq 0}\subseteq\La(X)$ then $T(t)x=x$ for all $x\in N(A)$ and $t\geq0$.
Moreover, the restriction of $(T(t))_{t\geq 0}$ to $\overline{\text{Ran}(A)}$ is generated by the part of $A$ in $\overline{\text{Ran}(A)}$, which is injective.
Hence for the purposes of stability theory it is natural to assume that $A$ is injective, and we will do so frequently.

For the definition and various properties of fractional powers of sectorial operators we refer to \cite{Haase06a,Martinez_Carracedo-Sanz_Alix01}. We shall use in particular that, for $\ph\in[0,\pi)$, $A\in \text{Sect}(\ph,X)$ and $\alpha,\beta,\eta\in(0,\infty)$, one has
\begin{equation}\label{eq:fractional power}
A^{\alpha}(\eta+A)^{-\alpha-\beta}=\frac{1}{2\pi\ui}\int_{\partial \Se_{\theta}}\frac{z^{\alpha}}{(\eta+z)^{\alpha+\beta}}R(z,A)\ud z.
\end{equation}
Here $\partial \Se_{\theta}$ is the positively oriented boundary of $\Se_{\theta}$ for $\theta\in (\ph,\pi)$. Note that $A^{\alpha}$ is injective for $A$ injective, and if $A$ is invertible then one may let $\alpha=0$ in \eqref{eq:fractional power}.

For $A$ a sectorial operator and $\alpha,\beta\in[0,\infty)$ we set $\Phi^{\alpha}_{\beta}(A):=A^{\alpha}(1+A)^{-\alpha-\beta}\in\La(X)$. We will frequently use that $\Phi^{\alpha}_{0}(A)=(A(1+A)^{-1})^{\alpha}$ and that
\begin{equation}\label{eq:functional calculus}
\Phi^{\alpha_{1}}_{\beta_{1}}(A)\Phi^{\alpha_{2}}_{\beta_{2}}(A)=\Phi^{\alpha_{1}+\alpha_{2}}_{\beta_{1}+\beta_{2}}(A)
\end{equation}
for $\alpha_{1},\alpha_{2},\beta_{1},\beta_{2}\in[0,\infty)$, by \cite[Proposition 3.1.1]{Haase06a}. Let $X^{\alpha}_{\beta}:=\text{Ran}(\Phi^{\alpha}_{\beta}(A))$, $X^{\alpha}:=X^{\alpha}_{0}$ and $X_{\beta}:=X^{0}_{\beta}$. If $A$ is injective then $X^{\alpha}_{\beta}$ is a Banach space with the norm
\[
\|x\|_{X^{\alpha}_{\beta}}:=\|x\|_{X}+\|\Phi^{\alpha}_{\beta}(A)^{-1}x\|_{X}=\|x\|_{X}+\|(1+A)^{\alpha+\beta}A^{-\alpha}x\|_{X}\quad(x\in X^{\alpha}_{\beta}).
\]
It follows from \cite[Proposition 3.10(i)]{BaChTo16} (the restriction $\alpha,\beta\in[0,1]$ is not needed here) that $X^{\alpha}_{\beta}=\text{ran}(A^{\alpha})\cap D(A^{\beta})$ with equivalence of norms. Finally, note that $\Phi^{\alpha}_{\beta}(A):X\to X^{\alpha}_{\beta}$ is an isomorphism. More precisely, there exists a constant $C\geq0$ such that
\begin{equation}\label{eq:norm using Phi}
\|T\|_{\La(X^{\alpha}_{\beta},X)}\leq \|T\Phi^{\alpha}_{\beta}(A)\|_{\La(X)}\leq C\|T\|_{\La(X^{\alpha}_{\beta},X)}\qquad(T\in\La(X^{\alpha}_{\beta},X)).
\end{equation}

\section{Resolvent estimates and multipliers}\label{sec:resolvents and multipliers}

In this section we prove some statements on Fourier multipliers and resolvents which will be used in later sections.

\subsection{Resolvents and Fourier multipliers}\label{subsec:resolvents and multipliers}

Throughout this subsection $-A$ is the generator of a $C_{0}$-semigroup $(T(t))_{t\geq 0}$ on a Banach space $X$.

For the reader's convenience we include a proof of the following standard lemma.

\begin{lemma}\label{lem:standardfact}
Let $n\in\N_{0}$, $x\in X$ and $\xi\in\R$. Suppose that
$-\ui\xi\in\rho(A)$ and that $[t\mapsto t^{n}T(t)x] \in L^{1}([0,\infty);X)$. Then
 \begin{align}\label{eq:equality convolution and multiplier0}
\F[t\mapsto t^{n}T(t) x](\xi) & =n!(\ui\xi+A)^{-n-1}x,\\
\label{eq:equality convolution and multiplier}
\F\Big(\int_{0}^{\infty}t^{n}T(t)g(\cdot-t)x\,\ud t\Big)(\xi) & =\widehat{g}(\xi) n!(\ui\xi+A)^{-n-1}x\qquad (g\in L^{1}(\R)).
\end{align}
\end{lemma}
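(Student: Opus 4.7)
The second identity \eqref{eq:equality convolution and multiplier} follows immediately from the first \eqref{eq:equality convolution and multiplier0} by the convolution theorem: setting $f(t) := \ind_{[0,\infty)}(t)\, t^{n} T(t) x$, which lies in $L^{1}(\R; X)$ by hypothesis, the map $s \mapsto \int_{0}^{\infty} t^{n} T(t) g(s - t) x\, \ud t$ is the convolution $(f * g)(s) \in L^{1}(\R; X)$ for $g \in L^{1}(\R)$, whence $\F(f*g)(\xi) = \hat{f}(\xi)\, \hat{g}(\xi)$.

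For \eqref{eq:equality convolution and multiplier0} I would pass to the twisted semigroup $S(t) := \ue^{-\ui \xi t} T(t)$, which has generator $-B$ with $B := \ui \xi + A$. The hypothesis $-\ui \xi \in \rho(A)$ says precisely that $B$ is invertible on $X$, and the identity becomes $\int_{0}^{\infty} t^{n} S(t) x\, \ud t = n!\, B^{-n-1} x$. As a preliminary I would show that $t^{n} \|S(t) x\| \to 0$ as $t \to \infty$: combining the bound $\|T(t) x\| \leq \|T(s)\| \|T(t - s) x\|$ for $s \in [0, 1]$ with the integrability hypothesis and averaging over $s$ yields this after a short estimate.

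I then induct on $n$. For $n = 0$, closedness of $B$ applied to the identity $B \int_{0}^{R} S(t) x\, \ud t = x - S(R) x$ (the fundamental theorem of calculus for the semigroup $S$) combined with $S(R) x \to 0$ gives $\int_{0}^{\infty} S(t) x\, \ud t = B^{-1} x$. For $n \geq 1$, since $B^{-1}$ commutes with $S(t)$ and $B^{-1} x \in \D(B)$, the map $t \mapsto -B^{-1} S(t) x$ is a strong antiderivative of $S(t) x$; integration by parts in $\int_{0}^{R} t^{n} S(t) x\, \ud t$ then produces the boundary term $-R^{n} B^{-1} S(R) x$ (vanishing in the limit by the decay of $t^{n} S(t) x$) together with $n B^{-1}$ times the integral with exponent $n - 1$, which closes the induction.

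The main subtlety to be aware of is that $-\ui \xi \in \rho(A)$ does not force $\w_{0}(T) < 0$. Consequently the standard Laplace representation $(\lambda + A)^{-1} x = \int_{0}^{\infty} \ue^{-\lambda t} T(t) x\, \ud t$, valid only for $\Real(\lambda) > \w_{0}(T)$, cannot simply be analytically continued along the real axis down to $\lambda = \ui \xi$, since $\sigma(A)$ may obstruct this. Leveraging invertibility of $B$ together with closedness and the pointwise decay of $t^{n} S(t) x$ circumvents this obstacle without requiring any analytic continuation across the spectrum.
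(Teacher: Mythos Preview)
Your proof is correct and reaches the result by a route that differs in its technical details from the paper's, though the overall architecture is similar. The paper first reduces to $x\in\D(A)$ by approximating $x$ with $\lambda(\lambda+A)^{-1}x$ and using dominated convergence; it then cites \cite[Lemma~3.1.9]{vanNeerven96b} to obtain $T(t)x\to 0$, establishes the $n=0$ identity by the fundamental theorem of calculus, and obtains general $n$ by differentiating $n$ times in $\xi$ under the integral. Your argument avoids the reduction to $\D(A)$ entirely: you use the standard identity $B\int_0^R S(t)x\,\ud t=x-S(R)x$, valid for all $x\in X$, together with closedness of $B$; you prove the pointwise decay $t^n\|S(t)x\|\to 0$ directly via the averaging trick (which is essentially what the cited lemma does as well); and you climb from $n-1$ to $n$ by integration by parts in $t$ rather than by differentiation in $\xi$. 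Your approach is slightly more self-contained (no external citation, no density step), while the paper's differentiation-in-$\xi$ step is perhaps the more natural gesture from the Fourier-analytic viewpoint. Both routes implicitly use that $t^nT(t)x\in L^1$ forces $t^kT(t)x\in L^1$ for $0\leq k\leq n$, which you need for the inductive hypothesis to apply; this is immediate from local boundedness near $0$ and $t^k\leq t^n$ for $t\geq 1$.
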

\begin{proof}
It suffices to prove \eqref{eq:equality convolution and multiplier0}, as \eqref{eq:equality convolution and multiplier} follows from \eqref{eq:equality convolution and multiplier0} by standard properties of convolutions.
Since $\lambda (\lambda+A)^{-1}x\to x$ as $\lambda\to \infty$, by the dominated convergence theorem we may additionally assume that $x\in D(A)$ and that $[t\mapsto t^{n}T(t)Ax]\in L^{1}([0,\infty);X)$.
Also, \cite[Lemma 3.1.9]{vanNeerven96b} implies that $[t\mapsto T(t)x]\in C_{0}([0,\infty);X)$. Now the fundamental theorem of calculus yields
\[
(\ui\xi+A)\!\int_{0}^{\infty}\ue^{-\ui\xi t}T(t)x\,\ud t =
\Big[-\ue^{-\ui\xi t}T(t)x\Big]^{\infty}_0 = x.
\]
Hence $\int_{0}^{\infty} \ue^{-\ui\xi t}T(t)x\,\ud t=(\ui\xi+A)^{-1}x$ and
\[
\int_{0}^{\infty} \ue^{-\ui\xi t}t^{n}T(t)x\,\ud t=\frac{1}{(-\ui)^{n}}\frac{\ud^{n}}{\ud\xi^{n}}\int_{0}^{\infty} \ue^{-\ui\xi t}T(t)x\,\ud t=n!(\ui\xi+A)^{-n-1}x.\qedhere
\]
\end{proof}

We will often use the following proposition, inspired by \cite[Theorem 3.1]{Latushkin-Shvydkoy01}.

\begin{proposition}\label{prop:q-boundedness implies infty-boundedness}
Let $Y$ be a Banach space that is continuously embedded in $X$ and let $n\in\N$. Suppose that $\ui\R\setminus\{0\}\subseteq\rho(A)$ and that there exist $\psi\in L^{\infty}(\R)$, $p\in[1,\infty)$ and $q\in[1,\infty]$ such that for $j\in \{n-1, n\}\cap\N$ one has
\begin{align*}
m_{1}^{j}(\cdot) &:= \psi(\cdot)R(\ui\cdot,A)^{j}  \in\Ma_{1,\infty}(\R;\La(Y,X)),
\\ m_{2}^{j}(\cdot) & :=(1-\psi(\cdot))R(\ui\cdot,A)^{j} \in\Ma_{p,q}(\R;\La(Y,X)).
\end{align*}
Then $T_{R(\ui\cdot,A)^{n}}:L^{p}(\R;Y)\cap L^{1}(\R;Y)\to L^{\infty}(\R;X)$ is bounded and $\|T_{R(\ui\cdot,A)^{n}}\|\leq 2MC_{n}$, where $M=\sup\{\|T(t)\|_{\La(X)}\mid t\in[0,2]\}$,
\[
C_{n}=\sum_{j=n-1}^{n}\|m_{1}^j\|_{\Ma_{1,\infty}(\R;\La(Y,X))}+\|m_{2}^j\|_{\Ma_{p,q}(\R;\La(Y,X))}
\]
for $n>1$, and
\[
C_{1}= \|m_{1}^1\|_{\Ma_{1,\infty}(\R;\La(Y,X))}+\|m_{2}^1\|_{\Ma_{p,q}(\R;\La(Y,X))}+\|I_{Y}\|_{\La(Y,X)}.
\]
\end{proposition}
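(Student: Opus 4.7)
The plan, following \cite[Theorem 3.1]{Latushkin-Shvydkoy01}, is to use the semigroup to produce a pointwise representation of $T_{R(\ui\cdot,A)^n}f(t_0)$ in terms of shifts of $T_{R(\ui\cdot,A)^n}f$ and $T_{R(\ui\cdot,A)^{n-1}}f$, and then average over a short $s$-interval so that the mere $L^{q}$-information on each $m_{2}^{j}$ is converted into an honest $L^{\infty}$-bound. The starting point is an operator-level identity: for $\xi\in\R\setminus\{0\}$, $x\in X$ and $s>0$, differentiate the map $t\mapsto \ue^{-\ui\xi t}T(t)R(\ui\xi,A)^{n}x$. Since $-A$ generates $(T(t))_{t\geq 0}$ and $(\ui\xi+A)R(\ui\xi,A)=\I_{X}$, the derivative equals $-\ue^{-\ui\xi t}T(t)R(\ui\xi,A)^{n-1}x$ (with the convention $R(\ui\xi,A)^{0}:=\I_{X}$). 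Integrating from $0$ to $s$ yields
\begin{equation*}
R(\ui\xi,A)^{n}x=\ue^{-\ui\xi s}T(s)R(\ui\xi,A)^{n}x+\int_{0}^{s}\ue^{-\ui\xi t}T(t)R(\ui\xi,A)^{n-1}x\,\ud t.\tag{$\star$}
\end{equation*}

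Next I would pass $(\star)$ to Fourier multipliers acting on $f\in L^{1}(\R;Y)\cap L^{p}(\R;Y)$ at a point $t_{0}\in\R$. By hypothesis $R(\ui\cdot,A)^{n}=m_{1}^{n}+m_{2}^{n}$ with $T_{m_{1}^{n}}f\in L^{\infty}(\R;X)$ and $T_{m_{2}^{n}}f\in L^{q}(\R;X)$, so $T_{R(\ui\cdot,A)^{n}}f$ is well defined almost everywhere as an element of $L^{\infty}+L^{q}$; the same holds for the $(n-1)$th power when $n>1$, while for $n=1$ the role of $R(\ui\cdot,A)^{0}$ is played by the embedding $Y\hookrightarrow X$. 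Multiplication by $\ue^{-\ui\xi s}$ corresponds to translation by $s$ in time; $T(s)$ is $\xi$-independent and so pulls out of the multiplier; and the scalar-operator symbol $\int_{0}^{s}\ue^{-\ui\xi t}T(t)\,\ud t$ is the Fourier transform of the kernel $r\mapsto T(r)\mathbf{1}_{[0,s]}(r)$, so its multiplier is convolution with that kernel, which composes with $T_{R(\ui\cdot,A)^{n-1}}$ by Fubini. Thus $(\star)$ lifts to
\begin{equation*}
T_{R(\ui\cdot,A)^{n}}f(t_{0})=T(s)\,T_{R(\ui\cdot,A)^{n}}f(t_{0}-s)+\int_{0}^{s}T(t)\,T_{R(\ui\cdot,A)^{n-1}}f(t_{0}-t)\,\ud t.\tag{$\star\star$}
\end{equation*}

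Finally I would average $(\star\star)$ over $s\in[1,2]$ and take $X$-norms, using $\|T(r)\|_{\La(X)}\leq M$ for $r\in[0,2]$. Splitting the first summand into its $m_{1}^{n}$- and $m_{2}^{n}$-pieces and applying H\"{o}lder's inequality on the interval $[t_{0}-2,t_{0}-1]$ of length~$1$ produces $M\bigl(\|m_{1}^{n}\|_{\Ma_{1,\infty}}\|f\|_{L^{1}(\R;Y)}+\|m_{2}^{n}\|_{\Ma_{p,q}}\|f\|_{L^{p}(\R;Y)}\bigr)$. The double integral is dominated by $M\int_{0}^{2}\|T_{R(\ui\cdot,A)^{n-1}}f(t_{0}-t)\|_{X}\,\ud t$; for $n>1$ the same $m_{1}/m_{2}$-decomposition on an interval of length~$2$ supplies exactly the $j=n-1$ contributions to $C_{n}$, while for $n=1$ the embedding contributes $M\|\I_{Y}\|_{\La(Y,X)}\|f\|_{L^{1}(\R;Y)}$. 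Summing and using $\|f\|_{L^{1}(\R;Y)}+\|f\|_{L^{p}(\R;Y)}\leq 2\|f\|_{L^{1}\cap L^{p}}$ delivers the claimed bound $\|T_{R(\ui\cdot,A)^{n}}f(t_{0})\|_{X}\leq 2MC_{n}\|f\|_{L^{1}\cap L^{p}}$. The delicate step will be the rigorous justification of $(\star\star)$: $(\star)$ is only a pointwise-in-$\xi$ equality of operators, so one has to interchange the $t$-integral with the inverse Fourier transform in the second summand and verify that both summands on the right of $(\star\star)$ are individually well defined as elements of $X$, notwithstanding the non-uniqueness of the $L^{\infty}+L^{q}$ decomposition of $T_{R(\ui\cdot,A)^{n}}f$.
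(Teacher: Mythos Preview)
Your approach is essentially the paper's, with one cosmetic variation: where the paper selects, for each integer $l$, a ``good'' point $t\in[l,l+1]$ at which $\|T_{m_2^n}f(t)\|_X\le 2\|m_2^n\|_{\Ma_{p,q}}\|f\|_{L^p}$ (a Chebyshev-type choice) and then propagates via the semigroup identity to arbitrary $t-\tau$ with $\tau\in[0,2]$, you instead average the identity over $s\in[1,2]$ to convert the $L^q$-bound into a pointwise one via H\"older. Both routes rest on the same operator identity and yield the same constant $2MC_n$.

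There is, however, a sign slip in your derivation of $(\star)$: since $R(\ui\xi,A)=(\ui\xi-A)^{-1}$, one has $(\ui\xi-A)R(\ui\xi,A)=\I_X$, not $(\ui\xi+A)R(\ui\xi,A)=\I_X$. Differentiating $t\mapsto \ue^{\ui\xi t}T(t)R(\ui\xi,A)^n x$ gives $+\ue^{\ui\xi t}T(t)R(\ui\xi,A)^{n-1}x$, so the correct version of $(\star\star)$ reads
\[
T_{R(\ui\cdot,A)^n}f(t_0)=T(s)\,T_{R(\ui\cdot,A)^n}f(t_0+s)-\int_0^s T(r)\,T_{R(\ui\cdot,A)^{n-1}}f(t_0+r)\,\ud r,
\]
with the shifts going forward rather than backward. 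This changes nothing in the subsequent norm estimates. As for the ``delicate step'' you flag, the paper resolves it cleanly by restricting to $f=\sum_{k=1}^K f_k\otimes x_k$ with $f_k\in\dot{\Sw}(\R)$ and $x_k\in Y$: for such $f$ all Fourier integrals converge absolutely, the interchange of the $t$-integral with $\F^{-1}$ is immediate by Fubini, and one passes to general $f\in L^1\cap L^p$ by density at the end.
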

\begin{proof}
Let $K\in\N$, $f_{1},\ldots, f_{K}\in \dot{\Sw}(\R)$ and $x_{1},\ldots, x_{K}\in Y$, and set $f:=\sum_{k=1}^{K}f_{k}\otimes x_{k}$. Then $T_{m_{1}^{n}}(f)\in C_{b}(\R;X)$ and
\begin{equation}\label{eq:infty bound}
\sup_{t\in\R}\|T_{m_{1}^{n}}(f)(t)\|_{X}\leq \|m_{1}^{n}\|_{\Ma_{1,\infty}(\R;\La(Y,X))}\|f\|_{L^{1}(\R;Y)}.
\end{equation}
Also,
\[
\|T_{m_{2}^{n}}(f)\|_{L^{q}(\R;X)}\leq \|m_{2}^{n}\|_{\Ma_{p,q}(\R;\La(Y,X))}\|f\|_{L^{p}(\R;Y)}.
\]
The latter inequality implies that for each $l\in\Z$ there exists a $t\in[l,l+1]$ such that
\begin{equation}\label{eq:pointwise bound}
\|T_{m_{2}^{n}}(f)(t)\|_{X}\leq 2\|m_{2}^{n}\|_{\Ma_{p,q}(\R;\La(Y,X))}\|f\|_{L^{p}(\R;Y)}.
\end{equation}
Fix an $l\in\Z$ and let $t\in [l,l+1]$ be such that \eqref{eq:pointwise bound} holds. Then \eqref{eq:infty bound} implies
\begin{equation}\label{eq:pointwise bound 2}
\|T_{R(\ui\cdot,A)^{n}}(f)(t)\|_{X}\leq 2(\|m_{1}^{n}\|_{\Ma_{1,\infty}}+\|m_{2}^{n}\|_{\Ma_{p,q}})\|f\|_{L^{1}(\R;Y)\cap L^{p}(\R;Y)}.
\end{equation}
Let $\tau\in[0,2]$ and note that
\[
\ue^{\ui\xi\tau}T(\tau)R(\ui\xi,A)x=R(\ui\xi,A)x+\int_{0}^{\tau}\ue^{\ui\xi r}T(r)x\,\ud r
\]
for all $\xi\in\R\setminus\{0\}$ and $x\in X$. Hence
\begin{align*}
&T(\tau)T_{R(\ui\cdot,A)^{n}}(f)(t)=\frac{1}{2\pi}\int_{\R}\ue^{\ui\xi (t-\tau)}\ue^{\ui\xi\tau}T(\tau)R(\ui\xi,A)^{n}\widehat{f}(\xi)\,\ud\xi\\
&=\frac{1}{2\pi}\int_{\R}\ue^{\ui\xi (t-\tau)}R(\ui\xi,A)^{n}\widehat{f}(\xi)\,\ud\xi\\
& \quad +\frac{1}{2\pi}\int_{\R}\int_{0}^{\tau}\ue^{\ui\xi (t-\tau)}\ue^{\ui\xi r}T(r)R(\ui\xi,A)^{n-1}\widehat{f}(\xi)\,\ud r\ud\xi\\
&=T_{R(\ui\cdot,A)^{n}}(f)(t-\tau)+\int_{0}^{\tau}T(r)T_{R(\ui\cdot,A)^{n-1}}(f)(t-\tau+r)\,\ud r.
\end{align*}
Now \eqref{eq:pointwise bound 2} and H\"{o}lder's inequality yield
\begin{align*}
&\|T_{R(\ui\cdot,A)^{n}}(f)(t-\tau)\|_{X}\\
&\leq M\Big(\|T_{R(\ui\cdot,A)^{n}}(f)(t)\|_{X}+\int_{0}^{\tau}\|T_{R(\ui\cdot,A)^{n-1}}(f)(t-\tau+r)\|_{X}\,\ud r\Big)\\
&\leq 2M(\|m_{1}^{n}\|_{\Ma_{1,\infty}}+\|m_{2}^{n}\|_{\Ma_{p,q}})\|f\|_{L^{1}(\R;Y)\cap L^{p}(\R;Y)}\\
& \quad+M(\tau \|T_{m_{1}^{n-1}}(f)\|_{L^{\infty}(\R;X)}+\tau^{1/q'}\|T_{m_{2}^{n-1}}(f)\|_{L^{q}(\R;X)})\\
&\leq 2M\Big(\sum_{j=n-1}^{n}\|m_{1}^{j}\|_{\Ma_{1,\infty}}+\|m_{2}^{j}\|_{\Ma_{p,q}}\Big)\|f\|_{L^{p}(\R;Y)\cap L^{1}(\R;Y)}
\end{align*}
for $n>1$. For $n=1$ the computation is similar, but one can directly estimate
\[
\int_{0}^{\tau}\|f(t-\tau-r)\|_{X}\,\ud r\leq \|I_{Y}\|_{\La(Y,X)}\|f\|_{L^{1}(\R;Y)}.
\]
This concludes the proof, since $\tau\in[0,2]$ and $l\in\Z$ are arbitrary and since $\dot{\Sw}(\R)\otimes Y\subseteq L^{p}(\R;Y)\cap L^{1}(\R;Y)$ is dense.
\end{proof}

\begin{remark}\label{rem:other multiplier conditions at zero}
When applying Proposition \ref{prop:q-boundedness implies infty-boundedness} we will consider $\psi$ with compact support. Then one may assume that $m_{1}^{j}\in \Ma_{u,v}(\R;\La(Y,X))$ for general $u\in[1,\infty)$ and $v\in[1,\infty]$. For $\chi\in C_{c}^{\infty}(\R)$ such that $\chi\equiv 1$ on $\supp(\psi)$ one has $m_{1}^{j}=\chi m_{1}^{j}\in \Ma_{u,\infty}(\R;\La(Y,X))$ by Young's inequality. The same proof now shows that $T_{R(\ui\cdot,A)^{n}}:L^{u}(\R;Y)\cap L^{p}(\R;Y)\to L^{\infty}(\R;X)$ is bounded, with
\[
\|T_{R(\ui\cdot,A)^{n}}\|\leq 2M\Big(\!\sum_{j=n-1}^{n}\|m_{1}^j\|_{\Ma_{u,v}(\R;\La(Y,X))}+\|m_{2}^j\|_{\Ma_{p,q}(\R;\La(Y,X))}\Big)
\]
for $n>1$, and similarly for $n=1$. However, Young's inequality also shows that $m_{1}^{j}=\chi m_{1}^{j}\chi\in\Ma_{1,\infty}(\R;\La(Y,X))$, so that these assumptions are no more general than those in Proposition \ref{prop:q-boundedness implies infty-boundedness}.
\end{remark}

\subsection{Resolvent estimates}\label{subsec:resolvent estimates}

We now present two propositions on resolvent growth. The assertions on uniform boundedness have for the most part been obtained by different methods in \cite[Lemma 3.3]{Weis97}, \cite[Lemma 1.1]{Huang-vanNeerven99}, \cite[Lemma 3.2]{Latushkin-Shvydkoy01} and \cite[Theorem 5.5]{BaChTo16}. The proof below allows us to also deduce the corresponding statements on $R$-boundedness directly. Note that if $A$ satisfies \eqref{eq:equivalence growth bounds 1a} with $\alpha\in(0,1)$ then one may in fact let $\alpha=0$, by elementary properties of resolvents.

\begin{proposition}\label{prop:app-equivalence growth bounds}
Let $\alpha\in\{0\}\cup[1,\infty)$, $\beta\in[0,\infty)$ and $\beta_{0}\in[0,1]$, and let $A$ be an injective sectorial operator on a Banach space $X$. Let $\ph\in(0,\tfrac{\pi}{2}]$ and $\Omega:=\overline{\C_{+}}\setminus (S_{\ph}\cup\{0\})$, and suppose that $-\Omega\subseteq\rho(A)$. Then the following statements hold:
\begin{enumerate}
\item\label{item:app-equivalence growth bounds 1} The collection
\begin{equation}\label{eq:equivalence growth bounds 1a}
\{\lambda^{\alpha}(\lambda+A)^{-1}\mid \lambda\in \Omega, \abs{\lambda}\leq 1\}\subseteq\La(X)
\end{equation}
is uniformly bounded if and only if
\begin{equation}\label{eq:equivalence growth bounds 1b}
\{(\lambda+A)^{-1}\mid \lambda\in \Omega,\abs{\lambda}\leq 1\}\subseteq \La(X^{\alpha},X)
\end{equation}
is uniformly bounded. Moreover, \eqref{eq:equivalence growth bounds 1a} is $R$-bounded if and only if \eqref{eq:equivalence growth bounds 1b} is $R$-bounded.
\item\label{item:app-equivalence growth bounds 2} The collection
\begin{equation}\label{eq:equivalence growth bounds 2}
\{\lambda^{-\beta}(\lambda+A)^{-1}\mid \lambda\in \Omega, \abs{\lambda}\geq 1\}\subseteq\La(X)
\end{equation}
is uniformly bounded if and only if
\begin{equation}\label{eq:equivalence growth bounds 3}
\{ \lambda^{\beta_0} (\lambda+A)^{-1}\mid \lambda\in \Omega,\abs{\lambda}\geq 1\}\subseteq \La(X_{\beta+\beta_0},X)
\end{equation}
is uniformly bounded. Moreover, \eqref{eq:equivalence growth bounds 2} is $R$-bounded if and only if \eqref{eq:equivalence growth bounds 3} is $R$-bounded.
\item\label{item:app-equivalence growth bounds 3} The collection
\[
\Big\{(1-\lambda)^{\beta_{0}}(\lambda+A)^{-1} A^{\alpha} (1+A)^{-\alpha-\beta-\beta_{0}}-\frac{(-\lambda)^{\alpha}}{(1-\lambda)^{\alpha+\beta}} (\lambda+A)^{-1}\Big| \lambda\in\Omega\Big\}
\]
is $R$-bounded in $\La(X)$.
\end{enumerate}
\end{proposition}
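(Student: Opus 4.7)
Setting $g(z) := z^{\alpha}(1+z)^{-\alpha-\beta-\beta_{0}}$ and using that $g(A)$ commutes with $(\lambda+A)^{-1}$ together with the scalar identity $(1-\lambda)^{\beta_{0}}g(-\lambda) = (-\lambda)^{\alpha}/(1-\lambda)^{\alpha+\beta}$, the operator in the statement equals
\[
F(\lambda) := (1-\lambda)^{\beta_{0}}\bigl[g(A) - g(-\lambda)I\bigr](\lambda+A)^{-1}.
\]
Because $h_{\lambda}(z) := [g(z) - g(-\lambda)]/(z+\lambda)$ has a removable pole at $z = -\lambda$ and is holomorphic on a sector containing $\sigma(A)$, the sectorial functional calculus gives $F(\lambda) = (1-\lambda)^{\beta_{0}} h_{\lambda}(A)$; thus the subtraction in the statement is precisely what turns the a priori merely bounded family $\{g(A)(\lambda+A)^{-1}\}_{\lambda\in\Omega}$ into the well-behaved $\{h_{\lambda}(A)\}_{\lambda\in\Omega}$ that we want to control.

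My plan is to realise this as a single contour integral. Starting from \eqref{eq:fractional power} for $g(A)$ over $\partial \Se_{\theta}$ with $\theta \in (\w_{A}, \pi-\ph)$ (so that $-\lambda \notin \overline{\Se_{\theta}}$ for all $\lambda \in \Omega$), multiplying by $(\lambda+A)^{-1}$, and applying the resolvent identity $(\lambda+A)^{-1}R(z,A) = (R(z,A) + (\lambda+A)^{-1})/(z+\lambda)$, the scalar integral $\int_{\partial \Se_{\theta}} g(z)/(z+\lambda)\,\ud z$ vanishes by Cauchy's theorem since $g(z)/(z+\lambda)$ is holomorphic on $\Se_{\theta}$ and decays like $|z|^{-\beta-\beta_{0}-1}$ at infinity (with the boundary case $\beta+\beta_{0}=0$ handled by approximation). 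A regularised treatment of the subtracted scalar $(1-\lambda)^{\beta_{0}} g(-\lambda)(\lambda+A)^{-1}$ then folds it into the integrand to yield
\[
F(\lambda) = \frac{(1-\lambda)^{\beta_{0}}}{2\pi\ui}\int_{\partial \Se_{\theta}}\frac{g(z) - g(-\lambda)}{z+\lambda}\,R(z,A)\,\ud z.
\]

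Once this representation is in hand, $R$-boundedness reduces to a Kahane-contraction argument. Writing $K(\lambda, z) := (1-\lambda)^{\beta_{0}}[g(z) - g(-\lambda)]/(z+\lambda)$, for any finite $\lambda_{1},\dots,\lambda_{K} \in \Omega$ and $x_{1},\dots,x_{K} \in X$, Minkowski's inequality moves the Rademacher sum inside the integral and \eqref{eq:Kahane contraction} bounds $\|\sum_{k} r_{k} K(\lambda_{k}, z) x_{k}\|_{L^{2}(\E;X)}$ pointwise in $z$ by $2\sup_{\lambda\in\Omega}|K(\lambda, z)|\,\|\sum_{k} r_{k} x_{k}\|_{L^{2}(\E;X)}$, whence
\[
\Bigl\|\sum_{k} r_{k} F(\lambda_{k}) x_{k}\Bigr\|_{L^{2}(\E;X)} \leq \frac{1}{\pi}\!\int_{\partial \Se_{\theta}}\!\!\|R(z,A)\|_{\La(X)}\sup_{\lambda\in\Omega}|K(\lambda,z)|\,|\ud z|\;\Bigl\|\sum_{k} r_{k} x_{k}\Bigr\|_{L^{2}(\E;X)}.
\]
Importantly, this argument sidesteps the question of $R$-boundedness of $\{R(z,A) : z \in \partial \Se_{\theta}\}$, which is unavailable for a merely sectorial $A$.

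The hard part is producing a $\lambda$-independent majorant $M(z) \geq \sup_{\lambda\in\Omega}|K(\lambda,z)|$ with $\int_{\partial \Se_{\theta}} M(z)/|z|\,|\ud z| < \infty$, given only the sectorial estimate $\|R(z,A)\|_{\La(X)} \leq C/|z|$. The cancellation in $g(z) - g(-\lambda)$ is essential: in the ``near'' regime $z \approx -\lambda$, one absorbs $(z+\lambda)^{-1}$ via the mean-value bound $|g(z)-g(-\lambda)| \leq |z+\lambda|\sup_{u\in[-\lambda,z]}|g'(u)|$, while in the ``far'' regime the angular separation $|z+\lambda| \gtrsim |z|+|\lambda|$ (from $\theta \in (\w_{A}, \pi-\ph)$) combined with the decay of $g$ at $0$ and at $\infty$ takes over. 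The hypotheses $\beta_{0} \in [0,1]$ and $\alpha \in \{0\}\cup[1,\infty)$ appear naturally here, the former to balance $|1-\lambda|^{\beta_{0}}$ against the kernel decay, the latter to avoid fractional-power complications at $z = 0$. As a sanity check, in the purely integer case $\alpha = 0$, $\beta+\beta_{0} \in \N$, a partial-fraction expansion of $1/[(1+z)^{\beta+\beta_{0}}(z+\lambda)]$ gives directly $F(\lambda) = -\sum_{k=0}^{\beta+\beta_{0}-1}(1-\lambda)^{k-\beta}(1+A)^{-k-1}$, a finite combination of fixed bounded operators with scalar coefficients uniformly bounded in $\lambda \in \Omega$ (since $k-\beta \in [-\beta,\beta_{0}-1] \subseteq (-\infty,0]$ and $|1-\lambda| \geq \sin\ph$), whose $R$-boundedness is immediate from \eqref{eq:Kahane contraction}.
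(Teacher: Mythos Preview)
Your proposal addresses only part (3), which is fine, but the single–contour-integral plan has a genuine gap at $z=0$.

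The integral
\[
F(\lambda)=\frac{(1-\lambda)^{\beta_{0}}}{2\pi\ui}\int_{\partial S_{\theta}}\frac{g(z)-g(-\lambda)}{z+\lambda}\,R(z,A)\,\ud z
\]
does not converge, and the majorant you seek does not exist. For any fixed $\lambda_{0}\in\Omega$ one has
\[
K(\lambda_{0},z)=\frac{(1-\lambda_{0})^{\beta_{0}}[g(z)-g(-\lambda_{0})]}{z+\lambda_{0}}
\;\xrightarrow[z\to0]{}\;-\frac{(1-\lambda_{0})^{\beta_{0}}g(-\lambda_{0})}{\lambda_{0}}
=\frac{(-1)^{\alpha+1}\lambda_{0}^{\alpha-1}}{(1-\lambda_{0})^{\alpha+\beta}}\neq0,
\]
so $\sup_{\lambda}|K(\lambda,z)|$ is bounded below by a positive constant near $z=0$. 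Since $A$ is only sectorial (not invertible), $\|R(z,A)\|_{\La(X)}\sim|z|^{-1}$ there, and the product is not integrable. The subtraction of $g(-\lambda)$ cures the pole at $z=-\lambda$ but \emph{destroys} the $z^{\alpha}$-decay that $g(z)/(z+\lambda)$ alone would have at the origin. Your integer sanity check does not detect this because there you bypass the contour integral entirely via partial fractions.

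The paper's proof avoids this obstruction by decoupling the two mechanisms. For the ``$\alpha$ part'' it works with $z^{\alpha}(1+z)^{-\alpha-1}$, keeping the full $z^{\alpha}$ factor in the integrand so that $z^{\alpha}R(z,A)\sim z^{\alpha-1}$ is integrable at $0$ when $\alpha>1$ (the cases $\alpha\in\{0,1\}$ are elementary and handled by the resolvent identity). For the ``$\beta$ part'' it shifts to the \emph{invertible} operator $\tfrac12+A$, whose resolvent $R(z,\tfrac12+A)$ is bounded near $z=0$; this is the key device you are missing. Part~(3) is then assembled algebraically from the two $R$-bounded remainder families $S_{\lambda}$ and $T_{\lambda}$ obtained in parts~(1) and~(2). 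Incidentally, the paper takes $\theta\in(\max(\omega_{A},\pi-\varphi),\pi)$, placing $-\lambda$ \emph{inside} $S_{\theta}$; the scalar integral then equals $g(-\lambda)$ by the residue at $z=-\lambda$ (this is Lemma~\ref{lem:app-auxiliary lemma}) rather than vanishing as you claim, and your range $\theta\in(\omega_{A},\pi-\varphi)$ need not force $-\lambda\notin\overline{S_{\theta}}$ in any case (take $\lambda\in\ui\R$).
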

\begin{proof}
Fix $\theta\in(\max(\w_{A},\pi-\ph),\pi)$ and let $\Gamma:=\{r\ue^{\ui\theta}\mid r\in[0,\infty)\}\cup\{r\ue^{-\ui\theta}\mid r\in[0,\infty)\}$ be oriented from $\infty\ue^{\ui\theta}$ to $\infty \ue^{-\ui\theta}$.

For \eqref{item:app-equivalence growth bounds 1} first note that, by the resolvent identity,
\begin{align*}
(\lambda+A)^{-1}A(1+A)^{-1}&=(1+A)^{-1}-\lambda(\lambda+A)^{-1}(1+A)^{-1}\\
&=(1+A)^{-1}-\frac{\lambda}{1+\lambda}(\lambda+A)^{-1}-\frac{\lambda}{1+\lambda}(1+A)^{-1}\\
&=\frac{1}{1+\lambda}(1+A)^{-1}-\frac{\lambda}{1+\lambda}(\lambda+A)^{-1}
\end{align*}
for all $\lambda\in \Omega$. Now \eqref{eq:Kahane contraction} and \eqref{eq:norm using Phi} yield \eqref{item:app-equivalence growth bounds 1} for $\alpha=1$.

Let $\alpha>1$. Then
\begin{equation}\label{eq:equivalence growth bounds hulp5}
\begin{aligned}
&(\lambda+A)^{-1}A^{\alpha}(1+A)^{-\alpha}=(\lambda+A)^{-1}(1+A)A^{\alpha}(1+A)^{-\alpha-1}\\
&=A^{\alpha}(1+A)^{-\alpha-1}+(1-\lambda)(\lambda+A)^{-1}A^{\alpha}(1+A)^{-\alpha-1}
\end{aligned}
\end{equation}
for all $\lambda\in\Omega$. Since the singleton $\{A^{\alpha}(1+A)^{-\alpha-1}\}\subseteq\La(X)$ is $R$-bounded, by \eqref{eq:norm using Phi} it suffices to show that \eqref{eq:equivalence growth bounds 1a} is uniformly bounded (or $R$-bounded) if and only if
\begin{equation}\label{eq:equivalence growth bounds 4}
\{(1-\lambda)(\lambda+A)^{-1}A^{\alpha}(1+A)^{-\alpha-1}\mid \lambda\in \Omega, \abs{\lambda}\leq 1\}\subseteq\La(X)
\end{equation}
is uniformly bounded (or $R$-bounded). The resolvent identity and \eqref{eq:fractional power} yield
\begin{align*}
(\lambda+A)^{-1}A^{\alpha}(1+A)^{-\alpha-1}&=\frac{1}{2\pi\ui}\int_{\Gamma}\frac{z^{\alpha}}{(1+z)^{\alpha+1}}(\lambda+A)^{-1}R(z,A)\,\ud z\\
&=\frac{1}{2\pi\ui}\int_{\Gamma}\frac{z^{\alpha}}{(1+z)^{\alpha+1}(z+\lambda)}\,\ud z (\lambda+A)^{-1}\\
&\quad +\frac{1}{2\pi\ui}\int_{\Gamma}\frac{z^{\alpha}}{(1+z)^{\alpha+1}(z+\lambda)}R(z,A)\,\ud z
\end{align*}
for $\lambda\in\Omega$. Hence, using \eqref{eq:app-auxiliary equation 1} of Lemma \ref{lem:app-auxiliary lemma},
\begin{equation}\label{eq:equivalence growth bounds 5}
(1-\lambda)(\lambda+A)^{-1}A^{\alpha}(1+A)^{-\alpha-1}=\frac{(-\lambda)^{\alpha}}{(1-\lambda)^{\alpha}}(\lambda+A)^{-1}+S_{\lambda},
\end{equation}
where
\[
S_{\lambda}:=\frac{1}{2\pi\ui}\int_{\Gamma}\frac{z^{\alpha}}{(1+z)^{\alpha+1}}\frac{1-\lambda}{z+\lambda}R(z,A)\,\ud z.
\]
Now fix $\varepsilon \in(0, \min(\alpha-1,1)]$. Then $z\mapsto \frac{z^{\varepsilon}}{(1+z)^{2\varepsilon}}R(z,A)$ is integrable on $\Gamma$, and
\[
\sup\Big\{\frac{\abs{z}^{\alpha-\varepsilon}}{\abs{1+z}^{\alpha+1-2\varepsilon}}\frac{\abs{1-\lambda}}{\abs{z+\lambda}}\Big|\lambda\in\Omega,z\in\Gamma\Big\}<\infty
\]
by \eqref{eq:app-auxiliary equation 2} in Lemma \ref{lem:app-auxiliary lemma}. Hence
\cite[Corollary 2.17]{Kunstmann-Weis04} implies that $\{S_{\lambda}\mid \lambda\in\Omega\}\subseteq\La(X)$ is $R$-bounded. Now \eqref{eq:equivalence growth bounds 5} shows that the uniform boundedness (or $R$-boundedness) of \eqref{eq:equivalence growth bounds 1a} and \eqref{eq:equivalence growth bounds 4} are equivalent, thereby proving \eqref{item:app-equivalence growth bounds 1}.

For \eqref{item:app-equivalence growth bounds 2} we may suppose that $\beta+\beta_{0}>0$. Then \eqref{eq:fractional power}, applied to the invertible sectorial operator $\tfrac{1}{2}+A$, and the resolvent identity imply that
\begin{align*}
(\lambda+A)^{-1}(1+A)^{-\beta-\beta_{0}}&=\frac{1}{2\pi\ui}\int_{\Gamma}\frac{1}{(\frac{1}{2}+z)^{\beta+\beta_0}}(\lambda+A)^{-1}R(z,\tfrac{1}{2}+A)\,\ud z\\
&=\frac{1}{2\pi\ui}\int_{\Gamma}\frac{1}{(\frac{1}{2}+z)^{\beta+\beta_0}(z+\lambda-\frac{1}{2})}\,\ud z(\lambda+A)^{-1}\\
&\quad +\frac{1}{2\pi\ui}\int_{\Gamma}\frac{1}{(\frac{1}{2}+z)^{\beta+\beta_0}(z+\lambda-\frac{1}{2})}R(z,\tfrac{1}{2}+A)\,\ud z
\end{align*}
for $\lambda\in\Omega$. Now \eqref{eq:app-auxiliary equation 1} yields
\begin{equation}\label{eq:equivalence growth bounds 6}
(1-\lambda)^{\beta_{0}}(\lambda+A)^{-1}(1+A)^{-\beta-\beta_{0}}=\frac{1}{(1-\lambda)^{\beta}}(\lambda+A)^{-1}+(1-\lambda)^{\beta_{0}}T_{\lambda},
\end{equation}
where
\[
T_{\lambda}:=\frac{1}{2\pi\ui}\int_{\Gamma}\frac{1}{(\frac{1}{2}+z)^{\beta+\beta_{0}}(z+\lambda-\frac{1}{2})}R(z,\tfrac{1}{2}+A)\,\ud z.
\]
Fix $\varepsilon\in(0,\beta+\beta_{0})$. Then $z\mapsto (z+\frac12)^{-\varepsilon} R(z,\tfrac{1}{2}+A)$ is integrable on $\Gamma$, and
\[
\sup\Big\{\frac{1+\abs{\lambda}}{\abs{\frac{1}{2}+z}^{\beta+\beta_{0}-\varepsilon}\abs{z+\lambda-\frac{1}{2}}}\Big|\lambda\in\Omega,z\in\Gamma\Big\}<\infty
\]
by \eqref{eq:app-auxiliary equation 2}. Hence \cite[Corollary 2.17]{Kunstmann-Weis04} implies that $\{(1+\abs{\lambda})T_{\lambda}\mid \lambda\in\Omega\}$ is $R$-bounded.
Since $|1-\lambda|^{\beta_0}\leq 1+\abs{\lambda}$ for all $\lambda\in\Omega$, the proof of part \eqref{item:app-equivalence growth bounds 2} is completed using \eqref{eq:Kahane contraction}, \eqref{eq:equivalence growth bounds 6} and \eqref{eq:norm using Phi}.

Finally, for \eqref{item:app-equivalence growth bounds 3} we restrict to the 
case where $\alpha>1$ and $\beta>0$. The other cases follow in a similar manner from the proofs of \eqref{item:app-equivalence growth bounds 1} and \eqref{item:app-equivalence growth bounds 2}.
The operator family in \eqref{item:app-equivalence growth bounds 3} can be written as
\begin{align*}
&A^{\alpha}(1+A)^{-\alpha} \Big[ (1-\lambda)^{\beta_0} (\lambda+A)^{-1} (1+A)^{-\beta-\beta_{0}} -  (1-\lambda)^{-\beta}(\lambda+A)^{-1}\Big]\\
& + (1-\lambda)^{-\beta}\Big[(\lambda+A)^{-1} A^{\alpha}(1+A)^{-\alpha} - \frac{(-\lambda)^{\alpha}}{(1-\lambda)^{\alpha}} (\lambda+A)^{-1}  \Big]
\\
&=: A^{\alpha}(1+A)^{-\alpha} V_{\lambda}^{1} + (1-\lambda)^{-\beta} V_{\lambda}^2.
\end{align*}
Using standard algebraic properties of $R$-boundedness (see \cite[Proposition 8.1.19]{HyNeVeWe2}), it suffices to prove that $\{V_{\lambda}^i\mid \lambda\in \Omega\}\subseteq\La(X)$ is $R$-bounded for $i\in\{1,2\}$. The proof of \eqref{item:app-equivalence growth bounds 2}, and in particular \eqref{eq:equivalence growth bounds 6}, shows that
\[
R(\{V_{\lambda}^1\mid \lambda\in \Omega\}) = R(\{(1-\lambda)^{\beta_{0}}T_{\lambda} \mid \lambda\in \Omega \})<\infty.
\]
For the other term note that, by \eqref{eq:equivalence growth bounds hulp5} and \eqref{eq:equivalence growth bounds 5}, $V_{\lambda}^2 = A^{\alpha}(1+A)^{-\alpha-1}+S_{\lambda}$. 
Hence the proof of \eqref{item:app-equivalence growth bounds 1} yields 
\[
R(\{V_{\lambda}^2\mid\lambda\in \Omega\})\leq \|A^{\alpha}(1+A)^{-\alpha-1}\|_{\La(X)} + R(\{S_{\lambda}\mid\lambda\in \Omega\})<\infty.\qedhere
\]
\end{proof}

\begin{corollary}\label{cor:app-growth and fractional domains}
Let $\alpha\in [0,\infty)$ and $\alpha_{0}\in[0,\alpha]$. Let $A$ be an injective sectorial operator on a Banach space $X$ such that $\ui\R\setminus\{0\}\subseteq\rho(A)$ and
\[
\sup\{\|\lambda^{\alpha}(\lambda+A)^{-1}\|_{\La(X)}\mid \lambda\in \ui\R\setminus\{0\}, \abs{\lambda}\leq 1\}<\infty.
\]
Then
\[
\sup\{\|\lambda^{\alpha-\alpha_{0}}(\lambda+A)^{-1}\|_{\La(X^{\alpha_{0}},X)}\mid \lambda\in\ui\R\setminus\{0\}, \abs{\lambda}\leq 1\}<\infty.
\]
\end{corollary}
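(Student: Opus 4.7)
The plan is to use \eqref{eq:norm using Phi} to translate the desired $\La(X^{\alpha_0},X)$-norm estimate into the uniform bound
\[
\sup_{\lambda\in\ui\R\setminus\{0\},\,|\lambda|\leq 1}\|\lambda^{\alpha-\alpha_0}(\lambda+A)^{-1}A^{\alpha_0}(1+A)^{-\alpha_0}\|_{\La(X)}<\infty,
\]
and then extract this bound from Proposition~\ref{prop:app-equivalence growth bounds}(3) together with the hypothesis. The case $\alpha\in(0,1)$ is essentially free: by the remark immediately preceding the proposition, the hypothesis is then equivalent to its $\alpha=0$ version, which gives $\|(\lambda+A)^{-1}\|_{\La(X)}$ uniformly bounded; since $|\lambda|^{\alpha-\alpha_0}\leq 1$ and $X^{\alpha_0}\hookrightarrow X$, the conclusion is immediate. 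So I may assume $\alpha\in\{0\}\cup[1,\infty)$.

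For $\alpha\geq 1$ and $\alpha_0\in\{0\}\cup[1,\alpha]$, Proposition~\ref{prop:app-equivalence growth bounds}(3) applies with its $\alpha$-parameter equal to $\alpha_0$, with $\beta=\beta_0=0$, and with $\Omega=\ui\R\setminus\{0\}$ (obtained by taking $\ph=\pi/2$ in the proposition). This yields the key identity
\[
(\lambda+A)^{-1}A^{\alpha_0}(1+A)^{-\alpha_0}=\frac{(-\lambda)^{\alpha_0}}{(1-\lambda)^{\alpha_0}}(\lambda+A)^{-1}+S_\lambda
\]
with $\{S_\lambda\}$ $R$-bounded, hence uniformly bounded, in $\La(X)$. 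Multiplying by $\lambda^{\alpha-\alpha_0}$ and using that $|\lambda|^{\alpha-\alpha_0}\leq 1$, $|1-\lambda|\geq 1$ for $\lambda\in\ui\R$, and $|\lambda^{\alpha-\alpha_0}(-\lambda)^{\alpha_0}|=|\lambda|^{\alpha}$, the rescaled principal term is dominated by $|\lambda|^{\alpha}\|(\lambda+A)^{-1}\|_{\La(X)}$, which is controlled by hypothesis, while the residual $\lambda^{\alpha-\alpha_0}S_\lambda$ remains bounded in operator norm.

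The hard part will be the gap $\alpha_0\in(0,1)$, which arises only when $\alpha>1$ and which is formally outside the scope of Proposition~\ref{prop:app-equivalence growth bounds}(3). I see two clean routes. The first is to extend the proof of that proposition directly, choosing the auxiliary exponent $\varepsilon$ in $(0,\alpha_0]$ in place of $(0,\min(\alpha_0-1,1)]$; inspection of the Cauchy integral estimates in the proof of part~(1) shows that this is all that is needed to preserve integrability on $\Gamma$ and the required supremum bound. A cleaner alternative is real interpolation: the already-established endpoint estimates $\|(\lambda+A)^{-1}\|_{\La(X,X)}\lesssim|\lambda|^{-\alpha}$ and $\|(\lambda+A)^{-1}\|_{\La(X^1,X)}\lesssim|\lambda|^{1-\alpha}$ combine, via the standard embedding $X^{\alpha_0}\hookrightarrow(X,X^1)_{\alpha_0,\infty}$ for fractional powers of sectorial operators, to give $\|(\lambda+A)^{-1}\|_{\La(X^{\alpha_0},X)}\lesssim|\lambda|^{\alpha_0-\alpha}$, completing the proof.
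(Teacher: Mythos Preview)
Your overall approach is correct via the second route, but it differs from the paper's proof, and your description of the first route for the gap $\alpha_0\in(0,1)$ is not accurate as stated.

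Concerning the first route: simply taking $\varepsilon\in(0,\alpha_0]$ in the proof of Proposition~\ref{prop:app-equivalence growth bounds}\eqref{item:app-equivalence growth bounds 1} does not work. The relevant supremum bound there comes from \eqref{eq:app-auxiliary equation 2} in Lemma~\ref{lem:app-auxiliary lemma}, which requires $\gamma=\alpha_0-\varepsilon\geq 1$; for $\alpha_0<1$ this is impossible, and indeed the factor $|z|^{\gamma-1}$ blows up at $z=0$. One can rescue this route with a more careful estimate (splitting the contour at $|z|=|\lambda|$ gives $\|S_\lambda\|_{\La(X)}\lesssim|\lambda|^{\alpha_0-1}$, and after multiplication by $|\lambda|^{\alpha-\alpha_0}$ one obtains the bound $|\lambda|^{\alpha-1}\leq 1$), but this requires more than the ``inspection'' you indicate. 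Your second route via real interpolation between the already-established cases $\alpha_0=0$ and $\alpha_0=1$ is correct. Note also that the gap $\alpha_0\in(0,1)$ occurs already for $\alpha=1$, not only for $\alpha>1$.

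The paper takes a more direct path that avoids any case distinction in $\alpha_0$ and does not invoke Proposition~\ref{prop:app-equivalence growth bounds}\eqref{item:app-equivalence growth bounds 3}. After reducing to $\alpha\geq 1$ and $\alpha_0\in(0,\alpha)$, it applies the moment inequality to the bounded sectorial operator $B:=A(1+A)^{-1}$: since $\Phi^{\alpha_0}_0(A)=B^{\alpha_0}$,
\[
\|B^{\alpha_0}(\lambda+A)^{-1}x\|_X\lesssim\|B^{\alpha}(\lambda+A)^{-1}x\|_X^{\alpha_0/\alpha}\,\|(\lambda+A)^{-1}x\|_X^{(\alpha-\alpha_0)/\alpha},
\]
and multiplying by $|\lambda|^{\alpha-\alpha_0}$ reduces the claim to the hypothesis together with the single endpoint bound $\sup_{|\lambda|\leq 1}\|(\lambda+A)^{-1}\|_{\La(X^{\alpha},X)}<\infty$ from Proposition~\ref{prop:app-equivalence growth bounds}\eqref{item:app-equivalence growth bounds 1}. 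This is the same interpolation idea as your second route, but implemented in one step between $0$ and $\alpha$ rather than between $0$ and $1$.
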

\begin{proof}
First note that $0\in\rho(A)$ for $\alpha<1$, by elementary properties of resolvents. Hence, by Proposition \ref{prop:app-equivalence growth bounds} \eqref{item:app-equivalence growth bounds 1} it suffices to consider $\alpha\geq 1$ and $\alpha_{0}\in(0,\alpha)$. By \cite[Propositions 2.1.1.f and 3.1.9]{Haase06a}, $A(1+A)^{-1}$ is a sectorial operator and $A^{\alpha_{0}}(1+A)^{-\alpha_{0}}=(A(1+A)^{-1})^{\alpha_{0}}$. Now the moment inequality \cite[Proposition 6.6.4]{Haase06a} and another application of \cite[Proposition 3.1.9]{Haase06a} yield
\begin{align*}
&\|\lambda^{\alpha-\alpha_{0}}(\lambda+A)^{-1}A^{\alpha_{0}}(1+A)^{-\alpha_{0}}x\|_{X}\\
&=\abs{\lambda}^{\alpha-\alpha_{0}}\|(A(1+A)^{-1})^{\alpha_{0}}(\lambda+A)^{-1}x\|_{X}\\
&\lesssim\abs{\lambda}^{\alpha-\alpha_{0}}\|(\lambda+A)^{-1}(A(1+A)^{-1})^{\alpha}x\|_{X}^{\alpha_{0}/\alpha}\|(\lambda+A)^{-1}x\|_{X}^{(\alpha-\alpha_{0})/\alpha}\\
&\leq \|(\lambda+A)^{-1}A^{\alpha}(1+A)^{-\alpha}\|_{\La(X)}^{\alpha_{0}/\alpha}\|\lambda^{\alpha}(\lambda+A)^{-1}\|_{\La(X)}^{(\alpha-\alpha_{0})/\alpha}\|x\|_{X}
\end{align*}
for all $\lambda\in\ui\R\setminus\{0\}$ and $x\in X$. Proposition \ref{prop:app-equivalence growth bounds} \eqref{item:app-equivalence growth bounds 1} and \eqref{eq:norm using Phi} conclude the proof.
\end{proof}

\section{Polynomial stability}\label{sec:polynomial stability}

In this section we study polynomial stability for semigroups using Fourier multipliers. We first obtain some results valid on general Banach spaces. Then we establish the connection between polynomial stability and Fourier multipliers, and we use this link to deduce polynomial stability results under geometric assumptions on the underlying space. We also study the necessity of the spectral assumptions which we make, compare our theorems with the literature, and give examples to illustrate our results.

The following terminology will be used throughout this section.

\begin{definition}\label{def:operators with resolvent growth}
Let $\alpha,\beta\in[0,\infty)$. An operator $A$ on a Banach space $X$ has \emph{resolvent growth $(\alpha,\beta)$} if the following conditions hold:
\begin{enumerate}[(i)]
\item\label{item:semigroup assumption} $-A$ generates a $C_{0}$-semigroup $(T(t))_{t\geq 0}$ on $X$;
\item\label{item:growth assumption} $\overline{\C_{-}}\setminus\{0\}\subseteq\rho(A)$, and
\[
\Big\{\frac{\lambda^{\alpha}}{(1+\lambda)^{\alpha+\beta}}(\lambda+A)^{-1}\Big|\, \lambda\in \overline{\C_{+}}\setminus\{0\}\Big\}\subseteq\La(X)
\]
is uniformly bounded.
\end{enumerate}
An operator $A$ has \emph{$R$-resolvent growth $(\alpha,\beta)$} if $A$ has resolvent growth $(\alpha,\beta)$ and
\[
\Big\{\lambda^{-\beta}(\lambda+A)^{-1}\Big|\, \lambda\in \overline{\C_{+}}, \abs{\lambda}\geq 1\Big\}\subseteq\La(X)
\]
is $R$-bounded.
\end{definition}

Note that we do not assume in \eqref{item:semigroup assumption} that the semigroup generated by $-A$ is uniformly bounded. We will implicitly use throughout that each operator $A$ with resolvent growth $(\alpha,\beta)$, for $\alpha\in[0,1)$ and $\beta\in[0,\infty)$, is invertible and thus has resolvent growth $(0,\beta)$, as follows from the fact that $\|R(\lambda,A)\|_{\La(X)}\geq \text{dist}(\lambda,\sigma(A))^{-1}$ for all $\lambda\in\rho(A)$.

Recall that we use the convention that $\frac{0}{0}=\infty$, for simplicity of notation.

\subsection{Results on general Banach spaces}

The following lemma is used to interpolate between decay rates. Related results can be found in \cite[Proposition 3.1]{BaEnPrSc06} and \cite[Lemma 4.2]{BaChTo16}. Recall the definition of the space $X^{\alpha}_{\beta}$, for $\alpha,\beta\geq0$, from Section \ref{subsec:sectorial operators}.

\begin{lemma}\label{lem:interpolation}
Let $A$ be an injective sectorial operator on a Banach space $X$ such that $-A$ generates a $C_{0}$-semigroup $(T(t))_{t\geq 0}$ on $X$. For $j\in\{1,2\}$ let $\alpha_{j},\beta_{j}\in[0,\infty)$ be such that $\alpha_{1}\geq \alpha_{2}$ and $\beta_{1}\geq \beta_{2}$, and let $f_{j}:[0,\infty)\to [0,\infty)$ be such that $\|T(t)\|_{\La(X^{\alpha_{j}}_{\beta_{j}},X)}\leq f_{j}(t)$ for all $t\in[0,\infty)$. Then for each $\theta\in[0,1]$ there exists a $C_{\theta}\in[0,\infty)$ such that
\begin{equation}\label{eq:decay on intermediate domains}
\|T(t)\|_{\calL(X^{\theta\alpha_{1} + (1-\theta)\alpha_2}_{\theta\beta_{1} + (1-\theta)\beta_2},X)}\leq C_{\theta} (f_{1}(t))^{\theta} (f_{2}(t))^{1-\theta}\qquad (t\in[0,\infty)).
\end{equation}
Moreover, suppose that $f_{1}(t)=Ct^{-\mu}$ for some $C,\mu\in[0,\infty)$ and all $t\in[1,\infty)$. Then for each $\theta\in[1,\infty)$ there exists a $C_{\theta}\in[0,\infty)$ such that
\begin{equation}\label{eq:decay on larger domains}
\|T(t)\|_{\La(X^{\theta\alpha_{1}}_{\theta\beta_{1}},X)}\leq C_{\theta} t^{-\mu\theta}\qquad(t\in[1,\infty)).
\end{equation}
\end{lemma}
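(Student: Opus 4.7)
My plan hinges on the functional-calculus identity
\[
\Phi^{\alpha_\theta}_{\beta_\theta}(A) = \Phi^{\alpha_1}_{\beta_1}(A)^{\theta}\,\Phi^{\alpha_2}_{\beta_2}(A)^{1-\theta},
\]
where $\alpha_\theta:=\theta\alpha_1+(1-\theta)\alpha_2$ and $\beta_\theta:=\theta\beta_1+(1-\theta)\beta_2$, valid for $\theta\in[0,1]$. This follows because $\Phi^{\alpha}_{\beta}$ corresponds to the scalar function $z\mapsto z^\alpha(1+z)^{-\alpha-\beta}$ in the holomorphic functional calculus, whence $\Phi^{\alpha}_{\beta}(A)^\gamma=\Phi^{\gamma\alpha}_{\gamma\beta}(A)$ for $\gamma\geq 0$, and \eqref{eq:functional calculus} then gives the claim. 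In view of \eqref{eq:norm using Phi}, the estimate \eqref{eq:decay on intermediate domains} is equivalent to the operator-norm bound $\|T(t)\Phi^{\alpha_\theta}_{\beta_\theta}(A)\|_{\La(X)} \leq C_\theta\,f_1(t)^{\theta}f_2(t)^{1-\theta}$.

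For part 1 I would prove this via Hadamard's three-lines theorem applied to the operator-valued family
\[
F(z) := T(t)\,\Phi^{\alpha_z}_{\beta_z}(A),\qquad \alpha_z:=z\alpha_1+(1-z)\alpha_2,\quad \beta_z:=z\beta_1+(1-z)\beta_2,
\]
on the strip $\{z\in\C\mid 0\leq \Real(z)\leq 1\}$. The identity above extends to complex $z$, so that $F(is)=T(t)\Phi^{\alpha_2}_{\beta_2}(A)\cdot G(is)$ and $F(1+is)=T(t)\Phi^{\alpha_1}_{\beta_1}(A)\cdot G(is)$, with $G(is):=\Phi^{\alpha_1}_{\beta_1}(A)^{is}\Phi^{\alpha_2}_{\beta_2}(A)^{-is}$. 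Applying three-lines to the scalar map $z\mapsto f_1(t)^{-z}f_2(t)^{-(1-z)}\langle F(z)\Phi^{M}_{M}(A)y,x^*\rangle$ for $y\in X$, $x^*\in X^*$, with $M$ taken large enough that the expression is analytic on the open strip and continuous up to its closure, then using density of $\Phi^M_M(A)(X)$ in $X$, yields the operator-norm estimate.

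For part 2 I first treat $\theta=n\in\N$: since $\Phi^{n\alpha_1}_{n\beta_1}(A)=\Phi^{\alpha_1}_{\beta_1}(A)^n$ commutes with $T(s)$,
\[
T(t)\Phi^{n\alpha_1}_{n\beta_1}(A)=\bigl(T(t/n)\Phi^{\alpha_1}_{\beta_1}(A)\bigr)^{n},
\]
so for $t\geq n$ we obtain $\|T(t)\Phi^{n\alpha_1}_{n\beta_1}(A)\|_{\La(X)}\leq (C(t/n)^{-\mu})^n\leq C_n t^{-\mu n}$; the range $t\in[1,n]$ is absorbed into $C_n$ via the standard $C_0$-semigroup growth bound on compact intervals. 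For non-integer $\theta\geq 1$, write $\theta=(1-\tau)n+\tau(n+1)$ with $n=\lfloor\theta\rfloor$ and $\tau=\theta-n\in(0,1)$, and invoke part 1 with endpoint pairs $((n+1)\alpha_1,(n+1)\beta_1)$ and $(n\alpha_1,n\beta_1)$ (which satisfy the required monotonicity) to obtain \eqref{eq:decay on larger domains}.

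The main obstacle lies in the three-lines step for part 1: the operators $G(is)=g^{is}(A)$ arising from a bounded holomorphic function $g$ on a sector are not uniformly bounded in $s\in\R$ without bounded imaginary powers of $A$. I expect one must extract at best subexponential growth $\|g^{is}(A)\|_{\La(X)}=O(e^{c|s|})$ from the holomorphic functional calculus of the bounded sectorial operator $A(1+A)^{-1}$, and compensate by a regularizing factor $e^{\delta z^2}$ in the test function, letting $\delta\to 0^+$; alternatively, one invokes a Stein-type interpolation theorem with admissible boundary growth.
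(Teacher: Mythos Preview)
Your argument for \eqref{eq:decay on larger domains} is essentially the paper's: integer $\theta$ by the semigroup law and $\Phi^{n\alpha_1}_{n\beta_1}(A)=\Phi^{\alpha_1}_{\beta_1}(A)^n$, then interpolate via the first part between consecutive integers. That part is fine.

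For \eqref{eq:decay on intermediate domains} the paper takes a different, and in this generality necessary, route. It does \emph{not} use three-lines or complex powers of $\Phi^{\alpha_j}_{\beta_j}(A)$. Instead it writes
\[
\Phi^{\theta\alpha_1+(1-\theta)\alpha_2}_{\theta\beta_1+(1-\theta)\beta_2}(A)
= \Phi^{\theta(\alpha_1-\alpha_2)}_{\theta(\beta_1-\beta_2)}(A)\,\Phi^{\alpha_2}_{\beta_2}(A)
= B^{c\theta}\,\Phi^{\alpha_2}_{\beta_2}(A),
\]
where $c:=(\alpha_1-\alpha_2)+(\beta_1-\beta_2)$ and $B:=A^{(\alpha_1-\alpha_2)/c}(1+A)^{-1}$. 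The point is that this specific $B$ is sectorial (this is quoted from \cite[Proposition 3.10]{BaChTo16}; note the exponents are chosen so that $B=\Phi^{a}_{1-a}(A)$ with $a\in[0,1]$), and then the \emph{moment inequality} for sectorial operators gives
\[
\|B^{c\theta}x\|_X \lesssim \|B^{c}x\|_X^{\theta}\,\|x\|_X^{1-\theta}
= \|\Phi^{\alpha_1-\alpha_2}_{\beta_1-\beta_2}(A)x\|_X^{\theta}\,\|x\|_X^{1-\theta},
\]
from which \eqref{eq:decay on intermediate domains} follows at once after inserting $x=T(t)\Phi^{\alpha_2}_{\beta_2}(A)y$. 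No imaginary powers enter anywhere.

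The obstacle you flagged in your three-lines plan is not a technicality but a genuine gap under the stated hypotheses. The identity $\Phi^{\alpha}_{\beta}(A)^{\gamma}=\Phi^{\gamma\alpha}_{\gamma\beta}(A)$ is not automatic: to even define real or complex powers of $\Phi^{\alpha}_{\beta}(A)$ you need it to be sectorial, which is what forces the paper to pass to the normalised operator $B$ above. More seriously, on the boundary of the strip your $G(is)$ is (a product of) imaginary powers $A^{is(\alpha_1-\alpha_2)}(1+A)^{-is\,c}$. For a general injective sectorial operator these are \emph{unbounded} operators; the estimate $\|g^{is}(A)\|\leq Ce^{c|s|}$ you hope for would require a bounded $H^\infty$-calculus, and even $\|A^{is}\|<\infty$ requires BIP. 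Neither is assumed here. The $e^{\delta z^2}$ trick salvages three-lines only when the boundary norms grow at most exponentially in $|s|$, so it does not close the argument. Replace the complex-interpolation step by the moment inequality applied to $B=A^{(\alpha_1-\alpha_2)/c}(1+A)^{-1}$ and the proof goes through without extra hypotheses.
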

\begin{proof}
Let $t\in[0,\infty)$ and note that, by \eqref{eq:norm using Phi} and \eqref{eq:functional calculus},
\begin{align*}
\|T(t)\|_{\La(X^{\theta\alpha_{1}+(1-\theta)\alpha_{2}}_{\theta\beta_{1}+(1-\theta)\beta_{2}},X)}&\leq \|T(t)\Phi^{\theta\alpha_{1}+(1-\theta)\alpha_{2}}_{\theta\beta_{1}+(1-\theta)\beta_{2}}(A)\|_{\La(X)}\\
&=\|T(t)\Phi^{\theta(\alpha_{1}-\alpha_{2})}_{\theta(\beta_{1}-\beta_{2})}(A)\Phi^{\alpha_{2}}_{\beta_{2}}(A)\|_{\La(X)}.
\end{align*}
Let $c:=\alpha_{1}-\alpha_{2}+\beta_{1}-\beta_{2}$. Then $\Phi^{(\alpha_{1}-\alpha_{2})/c}_{(\beta_{1}-\beta_{2})/c}(A)=A^{(\alpha_{1}-\alpha_{2})/c}(1+A)^{-1}$ is sectorial, by \cite[Proposition 3.10]{BaChTo16}. Hence \cite[Theorem 2.4.2]{Haase06a} yields
\begin{align*}
\Phi^{\theta(\alpha_{1}-\alpha_{2})}_{\theta(\beta_{1}-\beta_{2})}(A)=A^{\theta(\alpha_{1}-\alpha_{2})}(1+A)^{-\theta(\alpha_{1}-\alpha_{2}+\beta_{1}-\beta_{2})}=(\Phi^{(\alpha_{1}-\alpha_{2})/c}_{(\beta_{1}-\beta_{2})/c}(A))^{c\theta}.
\end{align*}
The moment inequality \cite[Proposition 6.6.4]{Haase06a} and \cite[Theorem 2.4.2]{Haase06a} imply that
\begin{align*}
\|(\Phi^{(\alpha_{1}-\alpha_{2})/c}_{(\beta_{1}-\beta_{2})/c}(A))^{c\theta}x\|_{X}\lesssim \|(\Phi^{(\alpha_{1}-\alpha_{2})/c}_{(\beta_{1}-\beta_{2})/c}(A))^{c}x\|_{X}^{\theta}\|x\|_{X}^{1-\theta}=\|\Phi^{\alpha_{1}-\alpha_{2}}_{\beta_{1}-\beta_{2}}(A)x\|_{X}^{\theta}\|x\|_{X}^{1-\theta}
\end{align*}
for $x\in D(\Phi^{\alpha_{1}-\alpha_{2}}_{\beta_{1}-\beta_{2}}(A))$. Combining all this with \eqref{eq:functional calculus} and \eqref{eq:norm using Phi} shows that
\begin{align*}
&\|T(t)\|_{\La(X^{\theta\alpha_{1}+(1-\theta)\alpha_{2}}_{\theta\beta_{1}+(1-\theta)\beta_{2}},X)}\leq \|T(t)\Phi^{\theta(\alpha_{1}-\alpha_{2})}_{\theta(\beta_{1}-\beta_{2})}(A)\Phi^{\alpha_{2}}_{\beta_{2}}(A)\|_{\La(X)}\\
&=\|(\Phi^{(\alpha_{1}-\alpha_{2})/c}_{(\beta_{1}-\beta_{2})/c}(A))^{c\theta}T(t)\Phi^{\alpha_{2}}_{\beta_{2}}(A)\|_{\La(X)}\\
&\lesssim \|\Phi^{\alpha_{1}-\alpha_{2}}_{\beta_{1}-\beta_{2}}(A)T(t)\Phi^{\alpha_{2}}_{\beta_{2}}(A)\|_{\La(X)}^{\theta}\|T(t)\Phi^{\alpha_{2}}_{\beta_{2}}(A)\|_{\La(X)}^{1-\theta}\\
&=\|T(t)\Phi^{\alpha_{1}}_{\beta_{1}}(A)\|_{\La(X)}^{\theta}\|T(t)\Phi^{\alpha_{2}}_{\beta_{2}}(A)\|_{\La(X)}^{1-\theta}\\
&\lesssim\|T(t)\|_{\La(X^{\alpha_{1}}_{\beta_{1}},X)}^{\theta}\|T(t)\|_{\La(X^{\alpha_{2}}_{\beta_{2}},X)}^{1-\theta}\leq (f_{1}(t))^{\theta}(f_{2}(t))^{1-\theta},
\end{align*}
thereby proving \eqref{eq:decay on intermediate domains}. As for \eqref{eq:decay on larger domains}, let $n\in\N$. Then
\begin{align*}
\|T(t)\|_{\La(X^{n\alpha_{1}}_{n\beta_{1}},X)} & \leq \|T(t)\Phi^{n\alpha_{1}}_{n\beta_{1}}(A)\|_{\La(X)}\leq \|T(\tfrac{t}{n})\Phi^{\alpha_{1}}_{\beta_{1}}(A)\|_{\La(X)}^n
\\ & \lesssim (f_1(\tfrac{t}{n}))^n = C^n n^{\mu n} t^{-\mu n},
\end{align*}
which implies \eqref{eq:decay on larger domains} for $\theta\in\N$. Finally, applying \eqref{eq:decay on intermediate domains} to interpolate between $(n\alpha_1,n\beta_1)$ and $((n+1)\alpha_1,(n+1)\beta_1)$ yields \eqref{eq:decay on larger domains} for all $\theta\in[1,\infty)$.
\end{proof}

The following result for $C_0$-semigroups on general Banach spaces extends \cite[Proposition 3.4]{BaEnPrSc06}, where the case $\alpha=\rho=0$ was considered.

\begin{proposition}\label{prop:general semigroups}
Let $\alpha,\beta\in[0,\infty)$ and let $A$ be an injective sectorial operator with resolvent growth $(\alpha,\beta)$ on a Banach space $X$. Let $\sigma,\tau\in[0,\infty)$ be such that $\sigma>\alpha-1$ and $\tau>\beta+1$. Then for each $\rho\in[0,\min(\frac{\sigma+1}{\alpha}-1,\frac{\tau-1}{\beta}-1))$ there exists a $C_{\rho}\in[0,\infty)$ such that
\begin{equation}\label{eq:general semigroups main}
\|T(t)\|_{\La(X^{\sigma}_{\tau},X)}\leq C_{\rho}t^{-\rho}\qquad (t\in[1,\infty)).
\end{equation}
\end{proposition}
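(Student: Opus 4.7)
The plan is to control $\|T(t)\|_{\La(X^{\sigma}_{\tau},X)}$ through $\|T(t)A^{\sigma}(1+A)^{-\sigma-\tau}\|_{\La(X)}$ via \eqref{eq:norm using Phi}. The argument proceeds in two stages: first, polynomial decay with integer exponent is produced from the Fourier‐inversion formula of Lemma \ref{lem:standardfact} combined with iterated resolvent estimates from Section \ref{subsec:resolvent estimates}; then, arbitrary fractional rates are extracted via Lemma \ref{lem:interpolation}.

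For the integer step, suppose $n\in\N_{0}$ and $(\sigma_{*},\tau_{*})$ satisfy $n<\min(\frac{\sigma_{*}+1}{\alpha}-1,\frac{\tau_{*}-1}{\beta}-1)$. Applying Lemma \ref{lem:standardfact} with $y=A^{\sigma_{*}}(1+A)^{-\sigma_{*}-\tau_{*}}z$ gives
\[
t^{n}T(t)A^{\sigma_{*}}(1+A)^{-\sigma_{*}-\tau_{*}}z=\frac{n!}{2\pi}\int_{\R}e^{it\xi}(i\xi+A)^{-(n+1)}A^{\sigma_{*}}(1+A)^{-\sigma_{*}-\tau_{*}}z\,\ud\xi,
\]
provided the integrand is absolutely integrable. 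Using \eqref{eq:functional calculus}, I would split the combined operator into $n+1$ commuting factors of the form $(i\xi+A)^{-1}\Phi^{\sigma_{*}/(n+1)}_{\tau_{*}/(n+1)}(A)$ (with a harmless regrouping when $\sigma_{*}>(n+1)\alpha$, splitting off a bounded factor $\Phi^{\sigma_{*}-(n+1)\alpha}_{0}(A)$). For $|\xi|\leq 1$, Corollary \ref{cor:app-growth and fractional domains} combined with \eqref{eq:norm using Phi} bounds each factor by $C|\xi|^{\sigma_{*}/(n+1)-\alpha}$, so the product is $\lesssim|\xi|^{\sigma_{*}-(n+1)\alpha}$, integrable precisely when $n<\frac{\sigma_{*}+1}{\alpha}-1$. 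For $|\xi|\geq 1$, Proposition \ref{prop:app-equivalence growth bounds}\eqref{item:app-equivalence growth bounds 2} applied to each factor gives $C|\xi|^{\beta-\tau_{*}/(n+1)}$, whence the product is $\lesssim|\xi|^{(n+1)\beta-\tau_{*}}$, integrable when $n<\frac{\tau_{*}-1}{\beta}-1$. Together this yields $\|T(t)\|_{\La(X^{\sigma_{*}}_{\tau_{*}},X)}\leq Ct^{-n}$ on $[1,\infty)$.

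For the fractional step, given $(\sigma,\tau)$ and $\rho\in[0,\rho^{*})$ with $\rho^{*}:=\min(\frac{\sigma+1}{\alpha}-1,\frac{\tau-1}{\beta}-1)$, set $n:=\lfloor\rho\rfloor$ and $\theta:=\rho-n\in[0,1)$. A short computation gives
\[
(1-\theta)\bigl((n+1)\alpha-1\bigr)+\theta\bigl((n+2)\alpha-1\bigr)=(\rho+1)\alpha-1,
\]
so the assumption $\rho<\frac{\sigma+1}{\alpha}-1$ (equivalently $\sigma>(\rho+1)\alpha-1$) leaves room to pick $(\sigma_{0},\tau_{0})$ with $n<\rho^{*}(\sigma_{0},\tau_{0})$ and $(\sigma_{1},\tau_{1})$ with $n+1<\rho^{*}(\sigma_{1},\tau_{1})$ satisfying $\sigma=(1-\theta)\sigma_{0}+\theta\sigma_{1}$, and similarly for $\tau$. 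The integer step then gives $\|T(t)\|_{\La(X^{\sigma_{0}}_{\tau_{0}},X)}\lesssim t^{-n}$ and $\|T(t)\|_{\La(X^{\sigma_{1}}_{\tau_{1}},X)}\lesssim t^{-(n+1)}$, and Lemma \ref{lem:interpolation} with this $\theta$ yields
\[
\|T(t)\|_{\La(X^{\sigma}_{\tau},X)}\leq C_{\theta}\,(t^{-(n+1)})^{\theta}(t^{-n})^{1-\theta}=C_{\rho}\,t^{-\rho}\qquad(t\in[1,\infty)).
\]

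The principal technical obstacle lies in the integer step: propagating the single‐resolvent bounds from Section \ref{subsec:resolvent estimates} through the $(n+1)$‐fold product while distributing $A^{\sigma_{*}}$ and $(1+A)^{-\sigma_{*}-\tau_{*}}$ among the factors. Part \eqref{item:app-equivalence growth bounds 3} of Proposition \ref{prop:app-equivalence growth bounds}, which packages a precisely weighted composition of a resolvent and a fractional power up to an $R$-bounded remainder, is the tool that makes this bookkeeping manageable and is the main reason that statement was set up in the form given.
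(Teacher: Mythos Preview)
There is a genuine gap in the integer step. Lemma~\ref{lem:standardfact} has as hypothesis that $[t\mapsto t^{n}T(t)y]\in L^{1}([0,\infty);X)$ and then computes the Fourier transform. You invoke it to produce the inverse representation
\[
t^{n}T(t)y=\frac{n!}{2\pi}\int_{\R}\ue^{\ui t\xi}(\ui\xi+A)^{-(n+1)}y\,\ud\xi,
\]
but absolute integrability of the integrand in $\xi$ is not the hypothesis of the lemma; the lemma needs $L^{1}$-integrability in $t$, which is precisely the decay you are trying to prove. Since the resolvent-growth assumption alone gives no a~priori decay of any orbit, no density or bootstrap supplies this hypothesis. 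The paper works the other way around: it \emph{defines}
\[
g(t):=\frac{1}{2\pi\ui}\int_{\ui\R}\ue^{-\lambda t}\frac{\lambda^{\mu}}{(1+\lambda)^{\mu+\nu}}R(\lambda,A)y\,\ud\lambda
\]
(convergent by the scalar weight and the resolvent-growth bound), checks $g(0)=x$ by deforming to the sectorial contour in \eqref{eq:fractional power}, verifies $g'=-Ag$, and concludes $g(t)=T(t)x$ by uniqueness for the Cauchy problem. Only then does it integrate by parts $n$ times to produce the factor $t^{n}$.

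Once the representation is justified, your factorisation of $(\ui\xi+A)^{-(n+1)}\Phi^{\sigma_{*}}_{\tau_{*}}(A)$ into $n{+}1$ commuting copies of $(\ui\xi+A)^{-1}\Phi^{\sigma_{*}/(n+1)}_{\tau_{*}/(n+1)}(A)$, bounded via Corollary~\ref{cor:app-growth and fractional domains} near $\xi=0$ and Proposition~\ref{prop:app-equivalence growth bounds}\eqref{item:app-equivalence growth bounds 2} at infinity, is a legitimate alternative to the paper's integration by parts (which instead produces terms $\lambda^{\mu-j}(1+\lambda)^{-\mu-\nu-(k-j)}R(\lambda,A)^{n-k+1}$ and estimates them directly from the scalar growth hypothesis). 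Your interpolation step is correct but heavier than necessary: after rewriting the conclusion at the specific exponents $\mu=\max((s+1)\alpha-1+\delta,0)$, $\nu=(s+1)\beta+1+\varepsilon$, a single application of Lemma~\ref{lem:interpolation} between consecutive integers reduces to $s\in\N_{0}$ with no need to construct two auxiliary pairs $(\sigma_{0},\tau_{0})$, $(\sigma_{1},\tau_{1})$. Finally, Proposition~\ref{prop:app-equivalence growth bounds}\eqref{item:app-equivalence growth bounds 3} is not used here; its role in the paper is in Proposition~\ref{prop:decay implies resolvent bounds2}.
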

\begin{proof}
By elementary calculations the proposition is equivalent to the following statement: for all $s\geq0$ and $\delta,\veps>0$ there exists a $C_{s,\delta,\veps}\geq0$ such that
\begin{equation}\label{eq:general semigroups}
\|T(t)\|_{\La(X^{\mu}_{\nu},X)}\leq C_{s,\delta,\veps}t^{-s}  \qquad (t\in [1, \infty)),
\end{equation}
where $\mu= \max((s+1) \alpha -1 + \delta,0)$  and $\nu = (s+1)\beta+1+\veps$. Furthermore, by Lemma \ref{lem:interpolation} it suffices to prove \eqref{eq:general semigroups} for $n:=s\in \N_0$.

Let $x\in X^{\mu}_{\nu+1}$ and set $y:=\Phi^{\mu}_{\nu}(A)x=A^{-\mu} (1+A)^{\mu+\nu}x\in D(A)$. Then
\[
g(t):=\frac{1}{2\pi\ui}\int_{\ui\infty}^{-\ui\infty}\!\ue^{-\lambda t}\frac{\lambda^{\mu}}{(1+\lambda)^{\mu+\nu}}R(\lambda,A)y\,\ud\lambda
\]
is a well defined element of $X$ for all $t\geq0$. One can check that $g$ is continuously differentiable with $g'(t)=-Ag(t)$. Also,
\begin{align*}
g(0)&=\frac{1}{2\pi\ui}\int_{\ui\infty}^{-\ui\infty}\frac{\lambda^{\mu}}{(1+\lambda)^{\mu+\nu}}R(\lambda,A)y\,\ud\lambda=A^{\mu}(1+A)^{-\mu-\nu}y=x.
\end{align*}
Here we have deformed the path of integration to the curve $\Gamma=\{r\ue^{\ui\theta}\mid r\in[0,\infty)\}\cup\{r\ue^{-\ui\theta}\mid r\in[0,\infty)\}$ in \eqref{eq:fractional power}, for $\theta\in(\w_{A},\pi)$, which we may do by the assumptions on $A$. Now $g(t)=T(t)x$, by uniqueness of the Cauchy problem associated with $-A$. Integration by parts yields
\begin{align*}
t^{n}T(t)x&=\frac{t^{n}}{2\pi\ui}\int_{\ui\R}\ue^{-\lambda t}\frac{\lambda^{\mu}}{(1+\lambda)^{\mu+\nu}}R(\lambda,A)y\,\ud\lambda\\
&=\frac{(-1)^{n}}{2\pi\ui}\int_{\ui\R}\Big(\frac{d^{n}}{d\lambda^{n}}\ue^{-\lambda t}\Big)\frac{\lambda^{\mu}}{(1+\lambda)^{\mu+\nu}}R(\lambda,A)y\,\ud\lambda\\
&=\frac{1}{2\pi\ui}\int_{\ui\R}\ue^{-\lambda t}p(\lambda,A)y\,\ud\lambda.
\end{align*}
Here $p(\lambda,A)$ is a finite linear combination of terms of the form
\[
\frac{\lambda^{\mu-j}}{(1+\lambda)^{\mu+\nu + (k-j)}}R(\lambda,A)^{n-k+1}
\]
for $0\leq j\leq k\leq n$, where we let $j=0$ if $\mu = 0$. Then
\begin{align*}
\|t^{n}T(t)x\|_{X}&\leq \frac{1}{2\pi}\int_{\ui\R}\|p(\lambda,A)\|_{\La(X)}\|y\|_{X}\ud\abs{\lambda}\lesssim \|(-A)^{-\mu}(1+A)^{\mu+\nu}x\|_{X}\leq\|x\|_{X^{\mu}_{\nu}}
\end{align*}
with implicit constants independent of $t$ and $x$. Since $X^{\mu}_{\nu+1}$ is dense in $X^{\mu}_{\nu}$, the proof is concluded.
\end{proof}

The following corollary of Proposition \ref{prop:general semigroups} and Lemma \ref{lem:interpolation} takes into account the growth behavior of $(T(t))_{t\geq 0}$ on $X$. It also extends Proposition \ref{prop:general semigroups} by providing stability rates on $X^{\sigma}_{\tau}$ for $\sigma\in [0,\alpha-1]$ and $\tau\in [0,\beta+1]$. The same approach was used in \cite[Theorem 3.5]{BaEnPrSc06} for uniformly bounded semigroups and $\alpha=0$.

\begin{corollary}\label{cor:growth of semigroup on X}
Let $\alpha,\beta\in[0,\infty)$ and let $A$ be an injective sectorial operator with resolvent growth $(\alpha,\beta)$ on a Banach space $X$. Let $\sigma,\tau\in[0,\infty)$.
Then for each $\rho\in[0,\min(\frac{\sigma}{\alpha},\frac{\tau}{\beta}))$ there exists a $C_{\rho}\in[0,\infty)$ such that
\begin{equation}\label{eq:growth of semigroup on X main}
\|T(t)\|_{\La(X^{\sigma}_{\tau},X)} \leq C_{\rho} \max(1,\|T(t)\|_{\La(X)})t^{-\rho} \qquad (t\in[1,\infty)).
\end{equation}
\end{corollary}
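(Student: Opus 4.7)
The plan is to deduce this corollary from Proposition~\ref{prop:general semigroups} together with Lemma~\ref{lem:interpolation}, by interpolating between the decay estimate provided by Proposition~\ref{prop:general semigroups} on a suitably large fractional space $X^{\sigma_1}_{\tau_1}$ and the trivial bound $\|T(t)\|_{\La(X)}$ on $X=X^0_0$. The interpolation parameter $\theta\in(0,1]$ will be chosen small enough that the enlarged parameters $(\sigma_1,\tau_1):=(\sigma/\theta,\tau/\theta)$ fall in the range where Proposition~\ref{prop:general semigroups} applies at the scaled rate $\rho_1:=\rho/\theta$; then the interpolated pair $(\theta\sigma_1,\theta\tau_1)$ equals the prescribed $(\sigma,\tau)$ and the interpolated rate $\theta\rho_1$ equals $\rho$.

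First, the case $\rho=0$ is immediate from the embedding $X^\sigma_\tau\hookrightarrow X$: using \eqref{eq:norm using Phi} one has $\|T(t)\|_{\La(X^\sigma_\tau,X)}\lesssim \|T(t)\Phi^\sigma_\tau(A)\|_{\La(X)}\lesssim \|T(t)\|_{\La(X)}\leq\max(1,\|T(t)\|_{\La(X)})$. So I may assume $\rho>0$; by the conventions $\tfrac{1}{0}=\tfrac{0}{0}=\infty$, this forces $\sigma>0$ whenever $\alpha>0$ and $\tau>0$ whenever $\beta>0$. The hypotheses of Proposition~\ref{prop:general semigroups} at $(\sigma_1,\tau_1)$ with rate $\rho_1$ translate into: $\sigma_1>\alpha-1$ (trivial when $\alpha\leq 1$, otherwise giving $\theta<\sigma/(\alpha-1)$); $\tau_1>\beta+1$ (giving $\theta<\tau/(\beta+1)$, trivial when $\beta=0$); $\rho_1<(\sigma_1+1)/\alpha-1$, rearranging to $\rho<\sigma/\alpha+\theta(1/\alpha-1)$ (trivial when $\alpha=0$); and $\rho_1<(\tau_1-1)/\beta-1$, rearranging to $\rho<\tau/\beta-\theta(1/\beta+1)$ (trivial when $\beta=0$). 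Since $\rho<\min(\sigma/\alpha,\tau/\beta)$ in every non-degenerate subcase, all four inequalities can be met simultaneously by taking $\theta\in(0,1]$ sufficiently small.

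For such a $\theta$, Proposition~\ref{prop:general semigroups} gives $\|T(t)\|_{\La(X^{\sigma_1}_{\tau_1},X)}\leq C_{\rho_1}t^{-\rho_1}$ for $t\in[1,\infty)$, which combined with the locally uniform bound on $[0,1]$ produces a function $f_1:[0,\infty)\to[0,\infty)$ with $\|T(s)\|_{\La(X^{\sigma_1}_{\tau_1},X)}\leq f_1(s)$ for all $s\geq 0$ and $f_1(s)\lesssim s^{-\rho_1}$ for $s\geq 1$. Applying Lemma~\ref{lem:interpolation} with $(\alpha_1,\beta_1)=(\sigma_1,\tau_1)$, $(\alpha_2,\beta_2)=(0,0)$, $f_2(s)=\|T(s)\|_{\La(X)}$ at the parameter $\theta\in[0,1]$ then yields
\[
\|T(t)\|_{\La(X^\sigma_\tau,X)}\leq C_\theta f_1(t)^\theta\|T(t)\|_{\La(X)}^{1-\theta}\lesssim t^{-\rho}\|T(t)\|_{\La(X)}^{1-\theta}\qquad(t\in[1,\infty)),
\]
using $\theta\sigma_1=\sigma$, $\theta\tau_1=\tau$ and $\theta\rho_1=\rho$. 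Since $s^{1-\theta}\leq\max(1,s)$ for $s\geq 0$ and $\theta\in[0,1]$, this is precisely \eqref{eq:growth of semigroup on X main}.

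The main obstacle is the bookkeeping needed to verify that a single $\theta\in(0,1]$ can be chosen to satisfy all four Proposition constraints at once, and to ensure that the various degenerate situations in which $\alpha$, $\beta$, $\sigma$ or $\tau$ vanishes are correctly absorbed by the stated conventions on division by zero so that the inequality $\rho<\min(\sigma/\alpha,\tau/\beta)$ retains its intended meaning throughout.
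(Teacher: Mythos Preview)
Your approach is essentially identical to the paper's: both interpolate via Lemma~\ref{lem:interpolation} between Proposition~\ref{prop:general semigroups} applied at enlarged parameters and the trivial bound on $X=X^{0}_{0}$, choosing the interpolation parameter $\theta\in(0,1)$ small; the paper merely first recasts the claim in the equivalent form $\mu=s\alpha+\delta$, $\nu=s\beta+\varepsilon$ before doing exactly this. One minor bookkeeping slip: the constraint $\tau_{1}>\beta+1$ is \emph{not} trivial when $\beta=0$---it still reads $\theta<\tau$, so you need $\tau>0$ there (the residual edge case $\beta=0$, $\tau=0$ is likewise not explicitly addressed in the paper's reformulation).
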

\begin{proof}
By elementary calculations it suffices to prove the following: for all $s\geq0$ and $\delta,\varepsilon>0$ there exists a constant $C_{s,\delta,\veps}\geq0$ such that
\begin{equation}\label{eq:growth of semigroup on X}
\|T(t)\|_{\La(X^{\mu}_{\nu},X)} \leq C_{s,\delta,\veps} \max(1,\|T(t)\|_{\La(X)})t^{-s} \qquad (t\in[1,\infty)),
\end{equation}
where $\mu = s\alpha+\delta$ and $\nu = s\beta+\veps$. Let $\wt{\veps}>0$ and for $\theta\in (0,1)$ set $\wt{s} := s/\theta$, $\wt{\mu}:= \max((\wt{s}+1) \alpha -1 + \wt{\veps},0)$  and $\wt{\nu} := (\wt{s}+1)\beta+1+\wt{\veps}$.
Then, by Lemma \ref{lem:interpolation} and \eqref{eq:general semigroups},
\[
\|T(t)\|_{\La(X^{\wt{\mu}\theta}_{\wt{\nu}\theta},X)}\lesssim \|T(t)\|_{\calL(X)}^{1-\theta} \|T(t)\|_{\calL(X^{\wt{\mu}}_{\wt{\nu}},X)}^{\theta} \lesssim \max(1,\|T(t)\|_{\La(X)}) t^{-s}
\]
for all $t\geq1$. Next, note that $\wt{\mu}\theta = \max(s\alpha+\theta(\alpha-1+\wt{\varepsilon}), 0)$ and $\wt{\nu}\theta = s\beta + \theta(\beta+1+\wt{\varepsilon})$.
Now the proof is concluded by letting $\theta\in (0,1)$ be such that $\wt{\mu}\theta\leq s\alpha+\varepsilon$ and $\wt{\nu}\theta\leq s\beta+\varepsilon$.
\end{proof}

\subsection{Polynomial stability and Fourier multipliers}\label{subsec:polynomial stability and multipliers}

In this subsection we relate polynomial stability of a semigroup to Fourier multiplier properties of the resolvent of its generator.

In order to state our abstract result on polynomial stability we introduce a class of admissible spaces.

\begin{definition}\label{def:admissable spaces}
Let $-A$ be the generator of a $C_{0}$-semigroup $(T(t))_{t\geq 0}$ on a Banach space $X$, and let $n\in\N_{0}$. A Banach space $Y$ which is continuously embedded in $X$ is \emph{$(A,n)$-admissible} if the following conditions hold:
\begin{enumerate}[(i)]
\item there exists a constant $C_{T}\in[0,\infty)$ such that $T(t)Y\subseteq Y$ and
\[
\|T(t)\restriction_{Y}\|_{\La(Y)}\leq C_{T}\|T(t)\|_{\La(X)}\qquad (t\in[0,\infty));
\]
\item\label{item:density admissable space} there exists a dense subspace $Y_{0}\subseteq Y$ such that $[t\mapsto t^{n}T(t)y]\in L^{1}([0,\infty);X)$ for all $y\in Y_{0}$.
\end{enumerate}
\end{definition}

Let $\alpha,\beta\in[0,\infty)$ and let $A$ be an injective sectorial operator with resolvent growth $(\alpha,\beta)$. Then $Y=X^{\sigma}_{\tau}$ is $(A,n)$-admissible for all $\sigma,\tau\in[0,\infty)$ and $n\in\N_{0}$, by Proposition \ref{prop:general semigroups}.

The following theorem is our main result relating polynomial stability and Fourier multipliers. It follows from \eqref{eq:quantitative infty estimate} and \eqref{eq:quantitative multiplier estimates} below that one can obtain quantitative bounds in each of the implications between \eqref{item:polynomial alpha-boundedness} and \eqref{item:polynomial resolvent multipliers}.

\begin{theorem}[Characterization of polynomial stability]\label{thm:abstract polynomial stability}
Let $-A$ be the generator of a $C_{0}$-semigroup $(T(t))_{t\geq 0}$ on a Banach space $X$, and assume that $A$ has resolvent growth $(\alpha,\beta)$ for some $\alpha,\beta\in[0,\infty)$. Let $n\in\N_{0}$ and let $Y$ be an $(A,n)$-admissible space.
Then the following statements are equivalent:
\begin{enumerate}[$(1)$]
\item\label{item:polynomial alpha-boundedness}
$\displaystyle \sup_{t\geq0}\|t^n T(t)\|_{\La(Y,X)}<\infty$;
\item\label{item:polynomial resolvent multipliers} there exist $\psi\in C_{c}^{\infty}(\R)$, $p\in[1,\infty)$ and $q\in[p,\infty]$ such that
\begin{align*}
\psi(\cdot)R(\ui\cdot,A)^{k} & \in \Ma_{1,\infty}(\R;\La(Y,X)),
\\ (1-\psi(\cdot))R(\ui\cdot,A)^{k}& \in \Ma_{p,q}(\R;\La(Y,X))
\end{align*}
for all $k\in\{n-1,n,n+1\}\cap\N$.
\end{enumerate}
Moreover, if \eqref{item:polynomial alpha-boundedness} or \eqref{item:polynomial resolvent multipliers} holds then $R(\ui\cdot,A)^{k}\in \Ma_{p,q}(\R;\La(Y,X))$ for:
\begin{enumerate}[(i)]
\item\label{item:Ra} $n\geq 2$, $k\in\{1,\ldots,n-1\}$ and $1\leq p\leq q\leq\infty$;
\item\label{item:Rb} $k=n\geq1$ and $1\leq p<q\leq \infty$;
\item\label{item:Rc} $k=n+1$, $p=1$ and $q = \infty$.
\end{enumerate}
\end{theorem}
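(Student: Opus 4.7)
The proof splits into three parts: (1) establishing the moreover statements \eqref{item:Ra}--\eqref{item:Rc} from \eqref{item:polynomial alpha-boundedness}; (2) deducing \eqref{item:polynomial resolvent multipliers} from these; and (3) the reverse implication. Assume \eqref{item:polynomial alpha-boundedness}, i.e., $\|s^{n} T(s)\|_{\La(Y,X)} \leq M$. Combining this with local boundedness of the semigroup and the embedding $Y \hookrightarrow X$, we see that $s \mapsto s^{k-1} T(s) y$ lies in $L^{r}(\R_{+}; X)$ with norm $\lesssim \|y\|_{Y}$ for every $r \in [1,\infty]$ when $1 \leq k \leq n-1$; every $r \in (1,\infty]$ when $k = n$; and only $r = \infty$ when $k = n+1$. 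By Lemma~\ref{lem:standardfact}, $T_{R(\ui\cdot,A)^{k}}(g \otimes y)$ is $\frac{1}{(k-1)!}$ times the convolution of $g$ with this function. Young's inequality (with $1+1/q = 1/p + 1/r$) then yields the multiplier bounds in \eqref{item:Ra}, \eqref{item:Rb}, \eqref{item:Rc}, respectively, and density of $Y_{0}$ in $Y$ extends them to all of $Y$.

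For \eqref{item:polynomial alpha-boundedness} $\Rightarrow$ \eqref{item:polynomial resolvent multipliers}, I take $p = 1$, $q = \infty$, and any $\psi \in C_{c}^{\infty}(\R)$. Specializing \eqref{item:Ra}--\eqref{item:Rc} to $(p,q) = (1,\infty)$ yields $R(\ui\cdot,A)^{k} \in \Ma_{1,\infty}(\R;\La(Y,X))$ for each $k \in \{n-1, n, n+1\} \cap \N$, so it suffices to show $\psi R(\ui\cdot,A)^{k} \in \Ma_{1,\infty}$ (the condition on $(1-\psi) R(\ui\cdot,A)^{k}$ then follows by linearity). The convolution kernel of $T_{\psi R(\ui\cdot,A)^{k}}$ is $\frac{1}{(k-1)!}\, \check\psi \ast [s \mapsto s^{k-1} T(s) y \ind_{[0,\infty)}(s)]$; since $\check\psi \in \Sw(\R) \subseteq L^{1}(\R)$ and $s \mapsto s^{k-1} T(s) y$ is in $L^{\infty}(\R_{+}; X)$ with norm $\lesssim \|y\|_{Y}$ for $k \leq n+1$, this kernel lies in $L^{\infty}(\R; X)$ with norm $\lesssim \|y\|_{Y}$, giving the required $L^{1} \to L^{\infty}$ bound.

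For \eqref{item:polynomial resolvent multipliers} $\Rightarrow$ \eqref{item:polynomial alpha-boundedness}, I apply Proposition~\ref{prop:q-boundedness implies infty-boundedness} with $n+1$ in place of its $n$; the hypotheses at $j \in \{n, n+1\} \cap \N$ are furnished by \eqref{item:polynomial resolvent multipliers}, so we obtain boundedness of $T_{R(\ui\cdot,A)^{n+1}} \colon L^{p}(\R; Y) \cap L^{1}(\R; Y) \to L^{\infty}(\R; X)$; applied at level $n$ it similarly bounds $T_{R(\ui\cdot,A)^{n}}$. For $y \in Y_{0}$ and $g \in L^{p} \cap L^{1}$, Lemma~\ref{lem:standardfact} turns these into $L^{\infty}$ bounds on the convolutions $g \ast F$ and $g \ast G$ with $F(s) := s^{n} T(s) y\, \ind_{[0,\infty)}(s)$ and $G(s) := s^{n-1} T(s) y\, \ind_{[0,\infty)}(s)$. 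The main obstacle is to upgrade these averaged bounds into the pointwise bound on $F$ demanded by \eqref{item:polynomial alpha-boundedness}: a naive mollifier approximation of $\delta_{0}$ by $g_{\eps}$ fails because $\|g_{\eps}\|_{L^{p}}$ blows up for $p>1$. I plan to circumvent this by exploiting the Duhamel identity
\[
F(t) = T(1)\,F(t-1) + n\int_{0}^{1} T(r)\,G(t-r)\,\ud r \qquad (t \geq 1),
\]
which arises from the ODE $(\partial_{t} + A)T_{R(\ui\cdot,A)^{n+1}}(f) = T_{R(\ui\cdot,A)^{n}}(f)$ after Duhamel integration. Applied to $g \ast F$ and $g \ast G$ (in place of $F$ and $G$) for well-chosen $g$, together with the semigroup identity from the proof of Proposition~\ref{prop:q-boundedness implies infty-boundedness}, this converts the averaged control on $F$ into the required pointwise control.
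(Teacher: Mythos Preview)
Your treatment of the ``moreover'' statements \eqref{item:Ra}--\eqref{item:Rc} and the implication \eqref{item:polynomial alpha-boundedness} $\Rightarrow$ \eqref{item:polynomial resolvent multipliers} is correct and coincides with the paper's argument: both rest on the convolution representation from Lemma~\ref{lem:standardfact} together with Young's inequality and the integrability of $t\mapsto t^{k-1}T(t)$ in $\La(Y,X)$.

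The gap is in \eqref{item:polynomial resolvent multipliers} $\Rightarrow$ \eqref{item:polynomial alpha-boundedness}. You correctly extract from Proposition~\ref{prop:q-boundedness implies infty-boundedness} that $T_{R(\ui\cdot,A)^{n+1}}$ and $T_{R(\ui\cdot,A)^{n}}$ are bounded from $L^{p}\cap L^{1}(\R;Y)$ to $L^{\infty}(\R;X)$, and you correctly identify the obstacle: feeding in $g\otimes y$ only controls averages $g\ast F$, and mollifiers blow up in $L^{p}$. But the proposed fix is not carried out. The Duhamel identity you write is valid, yet convolving it with $g$ merely relates $g\ast F$ to itself and to $g\ast G$; it does not recover $F$ pointwise. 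The vague reference to ``well-chosen $g$'' and to the semigroup identity inside Proposition~\ref{prop:q-boundedness implies infty-boundedness} does not close this.

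The paper avoids the obstacle entirely by a different choice of input: for $x\in Y_{0}$ and $\omega>\omega_{0}(T)$, set $f(t):=\ue^{-\omega t}T(t)x$ on $[0,\infty)$ and $f\equiv 0$ on $(-\infty,0)$. By $(A,n)$-admissibility of $Y$ one has $f\in L^{1}(\R;Y)\cap L^{\infty}(\R;Y)\subseteq L^{p}\cap L^{1}$ with norm $\lesssim\|x\|_{Y}$, and $\widehat{f}(\xi)=(\omega+\ui\xi+A)^{-1}x$ by Lemma~\ref{lem:standardfact}. The point is that the semigroup orbit, used as test function, interacts algebraically with the resolvent multiplier: with $m(\xi):=n!(\ui\xi+A)^{-n}\bigl(I_{X}+\omega(\ui\xi+A)^{-1}\bigr)$ the resolvent identity gives
\[
m(\xi)\widehat{f}(\xi)=n!(\ui\xi+A)^{-n-1}x=\F\bigl[t\mapsto t^{n}T(t)x\bigr](\xi),
\]
so that $T_{m}(f)(t)=t^{n}T(t)x$ \emph{exactly}, and the $L^{\infty}$ bound on $T_{m}(f)$ is already the pointwise bound you want. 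No upgrading step is needed. (Even without the correction term $\omega(\ui\xi+A)^{-1}$, applying $T_{(\ui\cdot+A)^{-n-1}}$ to this same $f$ yields $c(t)T(t)x$ with $c(t)=\frac{1}{n!}\int_{0}^{t}s^{n}\ue^{-\omega(t-s)}\ud s\gtrsim t^{n}$ for $t\geq 2$, which also suffices.) The missing idea in your argument is precisely this choice of $f$.
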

\begin{proof}
\eqref{item:polynomial resolvent multipliers}  $\Rightarrow$ \eqref{item:polynomial alpha-boundedness}:
Let $\omega,M_{\w}\geq1$ be such that $\|T(t)\|_{\La(X)} \leq M_{\omega} \ue^{t(\omega-1)}$ for all $t\geq0$, and set
\[
m(\xi):=n!(\ui\xi+A)^{-n}(I_{X}+\w (\ui\xi+A)^{-1})\in\La(Y,X)\qquad  (\xi\in\R\setminus\{0\}).
\]
Since $(\ui\cdot+A)^{-1}=-R(-\ui\cdot,A)$, it follows from Proposition \ref{prop:q-boundedness implies infty-boundedness} that
\begin{equation*}
T_{m}:L^{p}(\R;Y)\cap L^{1}(\R;Y)\to L^{\infty}(\R;X)
\end{equation*}
is bounded with
\begin{align}\label{eq:boundedness multiplier from intersection n larger than one estimate}
\|T_m\|\leq 2M n! (C_{n} + \omega C_{n+1}).
\end{align}
Here $M:=\sup_{t\in [0,2]} \|T(t)\|_{\calL(X)}$, $C_{k}$ is as in Proposition \ref{prop:q-boundedness implies infty-boundedness} for $k\in \N$, and $C_0 := \|I_{Y}\|_{\La(Y,X)}$. Now let $Y_0\subseteq Y$ be as in Definition \ref{def:admissable spaces} and fix $x\in Y_{0}$. Lemma \ref{lem:standardfact} yields
\begin{equation}\label{eq:Fouriertne}
\F [t\mapsto t^{n}T(t)x](\cdot) = n! (\ui \cdot+A)^{-n-1} x.
\end{equation}
Set $f(t):=\ue^{-\w t}T(t)x$ for $t\geq0$, and $f\equiv 0$ on $(-\infty,0)$. Then
\begin{equation}\label{eq:integrability from Y to X}
\|f(t)\|_Y\leq \|\ue^{-\w t}T(t)\|_{\La(Y)} \|x\|_{Y}\leq C_T \|\ue^{-\w t}T(t)\|_{\La(X)} \|x\|_Y\qquad(t\in[0,\infty)).
\end{equation}
Hence $f\in L^{1}(\R;Y)\cap L^{\infty}(\R;Y)$ and $\|f\|_{L^{r}(\R;Y)}\leq C_T M_{\omega} \|x\|_Y$ for all $r\in [1, \infty]$.
By Lemma \ref{lem:standardfact}, $\widehat{f}(\cdot) = (w+\ui\cdot+A)^{-1}x$. Therefore, by the resolvent identity,
\[
m(\xi) \widehat{f}(\xi) = n! (\ui \xi+A)^{-n-1} x\qquad(\xi\in\R\setminus\{0\}).
\]
Combining \eqref{eq:Fouriertne} and \eqref{eq:integrability from Y to X} with
\eqref{eq:boundedness multiplier from intersection n larger than one estimate} yields
\begin{equation}\label{eq:quantitative infty estimate}
\sup_{t\geq0}\|t^{n}T(t)x\|_X \leq \|T_m\| \big( \|f\|_{L^p(\R;Y)} + |f\|_{L^1(\R;Y)}\big) \leq C  \|x\|_Y,
\end{equation}
where $C=4M n! C_T M_{\omega}  (C_{n} + \omega C_{n+1})$. The required result now follows since $Y_0\subseteq Y$ is dense.

\medskip

\eqref{item:polynomial alpha-boundedness} $\Rightarrow$ \eqref{item:polynomial resolvent multipliers}: Set $K_n := \sup_{t\geq0}\|t^{n}T(t)x\|_X$ and let $Y_0\subseteq Y$ be as in Definition \ref{def:admissable spaces}.
Let $f\in \dot{\Sw}(\R)\otimes Y_0$ and set $S_{k}(f)(s):=\int_{0}^{\infty}t^{k}T(t)f(s-t)\ud t$ for $s\in\R$ and $k\in\{0, 1, \ldots, n\}$.
Lemma \ref{lem:standardfact} yields
\begin{equation}\label{eq:boundedness Sk}
S_{k}(f)=k!\F^{-1}((\ui\cdot+A)^{-k-1}\widehat{f}(\cdot))=k!\,T_{(\ui\cdot+A)^{-k-1}}(f).
\end{equation}
Now, for $n\geq 2$, $k\in \{0, \ldots, n-2\}$ and $r\in[1,\infty]$,
\[
\big\|[t\mapsto \|t^{k}T(t)]\|\big\|_{L^r([0,\infty);\La(Y,X))} \ud t \leq M + K_n \|[t\mapsto t^{-2}]\|_{L^r(1, \infty)} \leq  M + K_n.
\]
Similarly, for $n\geq 1$ and $r\in(1,\infty]$,
\[
\big\|[t\mapsto \|t^{n-1}T(t)]\|\big\|_{L^r([0,\infty);\La(Y,X))} \leq M + \frac{K_n}{(r-1)^{1/r}}.
\]
By combining these estimates with \eqref{eq:boundedness Sk} and with Young's inequality for operator-valued kernels in \cite[Proposition 1.3.5]{ArBaHiNe11} one obtains, for $p\in[1,\infty)$ and $q\in[p,\infty]$,
\begin{equation}\label{eq:quantitative multiplier estimates}
\begin{aligned}
\|R(\ui\cdot,A)^{k}\|_{\Ma_{p,q}(\R;\La(Y,X))} & \leq \tfrac{M + K^n}{(k-1)!}\qquad (n\geq 2, k\in \{1, \ldots, n-1\}),
\\ \|R(\ui\cdot,A)^{n}\|_{\Ma_{p,q}(\R;\La(Y,X))} & \leq\tfrac{M+(r-1)^{-1/r}K_{n}}{(n-1)!} \qquad (n\geq 1, p<q),
\\ \|R(\ui\cdot,A)^{n+1}\|_{\Ma_{1,\infty}(\R;\La(Y,X))} &\leq \tfrac{K_n}{n!}.
\end{aligned}
\end{equation}
Now \eqref{eq:boundedness Sk} and \eqref{eq:quantitative multiplier estimates} yield statements \eqref{item:Ra}-\eqref{item:Rc} for $(\ui\cdot+A)^{-1}$, and by reflection these statements hold for $R(\ui\cdot,A)$ as well. Finally, for \eqref{item:polynomial resolvent multipliers} let $\psi\in C_{c}^{\infty}(\R)$. Then Young's inequality and \eqref{eq:quantitative multiplier estimates} yield $\psi(\cdot) R(\ui\cdot,A)^{k} \in\Ma_{1,\infty}(\R;\La(Y,X))$ for all $k\in\{1,\ldots, n+1\}$, and one obtains \eqref{eq:quantitative multiplier estimates} for $\psi(\cdot)R(\ui\cdot,A)$ with an additional multiplicative factor $\|\F^{-1}(\psi)\|_{L^{1}(\R)}$. Similarly, \eqref{eq:quantitative multiplier estimates} holds with an additional multiplicative factor $\|\F^{-1}(1-\psi)\|_{L^{1}(\R)}$ upon replacing $R(\ui\cdot,A)$ by $(1-\psi(\cdot)) R(\ui\cdot,A)$.
\end{proof}

The assumption in Theorem \ref{thm:abstract polynomial stability} that $A$ has resolvent growth $(\alpha,\beta)$ for some $\alpha,\beta\in[0,\infty)$ is only made to ensure that $T_{R(i\cdot,A)}$ is well-defined, and the specific choice of $\alpha$ and $\beta$ is irrelevant here. Inspection of the proof of Theorem \ref{thm:abstract polynomial stability} also shows that one could assume in \eqref{item:polynomial resolvent multipliers} that for each $k\in \{n-1, n, n+1\}\cap \N$ there exist $p_k,q_k\in [1, \infty]$ such that
\begin{align*}
(1-\psi(\cdot))R(\ui\cdot,A)^{k}& \in \Ma_{p_k,q_k}(\R;\La(Y,X)).
\end{align*}
However, we will not need this generality in the remainder. As was already mentioned in Remark \ref{rem:other multiplier conditions at zero}, the assumption
\begin{align*}
\psi(\cdot)R(\ui\cdot,A)^{k}& \in \Ma_{1,\infty}(\R;\La(Y,X))
\end{align*}
in \eqref{item:polynomial resolvent multipliers} is the most general $(L^{p},L^{q})$ Fourier multiplier condition for $\psi(\cdot)R(\ui\cdot,A)^{k}$.

\begin{remark}\label{rem:p<q necessary}
The theory of $(L^{p},L^{p})$ Fourier multipliers alone cannot yield a characterization of polynomial stability as in Theorem \ref{thm:abstract polynomial stability},
and in general it is necessary to also consider the case where $p<q$ in condition \eqref{item:polynomial resolvent multipliers}.
To see this, consider a uniformly bounded $C_{0}$-semigroup $(T(t))_{t\geq 0}\subseteq\La(X)$ with generator $-A$ such that $\overline{\C_{-}}\subseteq\rho(A)$ but $A$ is not of type $(0,0)$. Let $n=0$ and $Y=X$.
Then $R(\ui\cdot,A)\notin\Ma_{p,p}(\R;\La(X))$ for each $p\in[1,\infty)$ since $\sup\{\|R(\ui\xi,A)\|_{\La(X)}\mid \xi\in\R\setminus\{0\}\}=\infty$.
Nonetheless, \eqref{item:polynomial alpha-boundedness} holds since $(T(t))_{t\geq0}$ is uniformly bounded, and $R(\ui\cdot,A)\in \Ma_{1,\infty}(\R;\La(X))$. Indeed,
\[
\F^{-1}(R(\ui\cdot,A)\widehat{f}(\cdot))(t)=\int_{0}^{\infty}T(t-s)f(s)\ud s\qquad (t\in\R)
\]
defines an element of $L^{\infty}(\R;X)$ for each $f\in \Sw(\R;X)$.
\end{remark}

A variation of the proof of Theorem \ref{thm:abstract polynomial stability} yields the following result, which will also be used in Section \ref{sec:exponential stability}. In particular, it provides a simple condition for powers of the resolvent to be Fourier multipliers.

\begin{proposition}\label{prop:abstract polynomial stability}
Let $-A$ be the generator of a $C_{0}$-semigroup $(T(t))_{t\geq 0}$ on a Banach space $X$, and suppose that $\overline{\C_{-}}\setminus\{0\}\subseteq\rho(A)$. Let $q\in[1,\infty)$, $n\in\N_{0}$ and let $Y$ be an $(A,n)$-admissible space.
Then the following statements are equivalent:
\begin{enumerate}[$(1)$]
\item\label{item:polynomial alpha-boundedness prop}
there exists a constant $C\in[0,\infty)$ such that $[t\mapsto t^{n}T(t)x]\in L^{q}([0,\infty);X)$ for all $x\in Y$, and
\[
\|[t\mapsto t^{n}T(t)x]\|_{L^{q}([0,\infty);X)}\leq C\|x\|_{Y}\qquad (x\in Y);
\]
\item\label{item:polynomial integrability prop}
for each $k\in\{n,n+1\}\cap \N$ one has $R(\ui\cdot,A)^{k}\in \Ma_{1,q}(\R;\La(Y,X))$;

\item\label{item:polynomial resolvent multipliers prop} there exist $\psi\in C_{c}^{\infty}(\R)$ and $p\in[1,q]$ such that
\[
\psi(\cdot)R(\ui\cdot,A)^{k}\in\Ma_{1,q}(\R;\La(Y,X))\text{ and } (1-\psi(\cdot))R(\ui\cdot,A)^{k}\in\Ma_{p,q}(\R;\La(Y,X))
\]
for $k\in\{n,n+1\}\cap \N$.
\end{enumerate}
\end{proposition}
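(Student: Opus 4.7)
The plan is to prove the cycle $(1)\Rightarrow(2)\Rightarrow(3)\Rightarrow(1)$, mirroring the proof of Theorem \ref{thm:abstract polynomial stability} but replacing its $L^{\infty}$-based arguments by $L^{q}$-based ones. In each implication Lemma \ref{lem:standardfact} serves as the bridge between the semigroup and powers of the resolvent.

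For $(1)\Rightarrow(2)$, fix $k\in\{n,n+1\}\cap\N$ and set $K_{k-1}(t):=t^{k-1}T(t)$ for $t\geq 0$ and $K_{k-1}(t):=0$ for $t<0$. The key claim is that
\[
\|K_{k-1}(\cdot)x\|_{L^{q}([0,\infty);X)}\lesssim\|x\|_{Y}\qquad(x\in Y).
\]
When $k=n+1$ this is exactly (1). When $k=n$ (so $n\geq 1$), split the integral at $t=1$: on $[0,1]$ the local boundedness of $\|T(\cdot)\|_{\La(X)}$ together with $Y\hookrightarrow X$ gives $\|t^{n-1}T(t)x\|_{X}\lesssim t^{n-1}\|x\|_{Y}$, which is in $L^{q}([0,1])$ with the required norm; on $[1,\infty)$ one has $t^{n-1}\leq t^{n}$, so the bound follows from (1). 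The Minkowski integral inequality then shows that convolution with $K_{k-1}$ is bounded from $L^{1}(\R;Y)$ to $L^{q}(\R;X)$. Applied to the dense subspace $\dot{\Sw}(\R)\otimes Y_{0}$, with $Y_{0}$ as in Definition \ref{def:admissable spaces}, equation \eqref{eq:equality convolution and multiplier} identifies this convolution with $(k-1)!\,T_{(\ui\cdot+A)^{-k}}$, yielding $R(\ui\cdot,A)^{k}\in\Ma_{1,q}(\R;\La(Y,X))$ (using reflection invariance of multiplier classes).

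The implication $(2)\Rightarrow(3)$ is essentially trivial with $p=1$: for any $\psi\in C_{c}^{\infty}(\R)$ one has $\F^{-1}\psi\in L^{1}(\R)$, and the identity $T_{\psi m}f=(\F^{-1}\psi)\ast T_{m}f$ combined with Young's inequality in $L^{q}$ gives $\psi R(\ui\cdot,A)^{k}\in\Ma_{1,q}$, while $(1-\psi)R(\ui\cdot,A)^{k}=R(\ui\cdot,A)^{k}-\psi R(\ui\cdot,A)^{k}$ also lies in $\Ma_{1,q}$.

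For $(3)\Rightarrow(1)$, adapt the argument used for $(2)\Rightarrow(1)$ in Theorem \ref{thm:abstract polynomial stability}. Fix $\omega\geq 1$ with $\|T(t)\|_{\La(X)}\leq M_{\omega}e^{t(\omega-1)}$, set $f(t):=e^{-\omega t}T(t)x$ for $t\geq 0$ and $0$ otherwise, and define
\[
m(\xi):=n!(\ui\xi+A)^{-n}+n!\,\omega(\ui\xi+A)^{-n-1}.
\]
A short resolvent-identity computation yields $m(\xi)\widehat{f}(\xi)=n!(\ui\xi+A)^{-n-1}x$, which by \eqref{eq:equality convolution and multiplier0} equals $\F[t\mapsto t^{n}T(t)x](\xi)$ when $x\in Y_{0}$. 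By $(A,n)$-admissibility, $f\in L^{1}(\R;Y)\cap L^{p}(\R;Y)\cap L^{q}(\R;X)$ with all three norms $\lesssim\|x\|_{Y}$. Split $m=\psi m+(1-\psi)m$: assumption (3) (applied to $k=n+1$ and, when $n\geq 1$, also to $k=n$) shows that $T_{\psi m}:L^{1}(\R;Y)\to L^{q}(\R;X)$ and $T_{(1-\psi)m}:L^{p}(\R;Y)\to L^{q}(\R;X)$ are bounded; in the case $n=0$ the identity term $(\ui\cdot+A)^{-n}=I$ is absorbed by the direct observation that $f\in L^{q}([0,\infty);X)$. Summing the estimates gives $\|t^{n}T(t)x\|_{L^{q}([0,\infty);X)}\lesssim\|x\|_{Y}$ for $x\in Y_{0}$, and (1) follows by density. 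The only step that is not a direct translation of the proof of Theorem \ref{thm:abstract polynomial stability} is the $L^{q}$-transfer from $t^{n}T(t)x$ to $t^{n-1}T(t)x$ in $(1)\Rightarrow(2)$, which is the main obstacle and is handled via the elementary split at $t=1$ above.
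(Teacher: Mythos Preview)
Your proof is correct and follows essentially the same route as the paper. The paper organizes the cycle as $(2)\Rightarrow(3)$ (trivial), $(3)\Rightarrow(1)$ (mimic Theorem~\ref{thm:abstract polynomial stability} without invoking Proposition~\ref{prop:q-boundedness implies infty-boundedness}), and $(1)\Rightarrow(2)$ (Minkowski's inequality plus Lemma~\ref{lem:standardfact}); your split at $t=1$ to pass from $t^{n}T(t)x\in L^{q}$ to $t^{n-1}T(t)x\in L^{q}$ is exactly what underlies the paper's terse remark that $[t\mapsto t^{k}T(t)x]\in L^{q}(\R_{+};X)$ for all $k\in\{0,\ldots,n\}$.
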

\begin{proof}
\eqref{item:polynomial integrability prop} $\Rightarrow$ \eqref{item:polynomial resolvent multipliers prop} is trivial. For
\eqref{item:polynomial resolvent multipliers prop}  $\Rightarrow$ \eqref{item:polynomial alpha-boundedness prop} one proceeds in an almost identical manner as in the proof of Theorem \ref{thm:abstract polynomial stability}, except that now it is not necessary to appeal to Proposition \ref{prop:q-boundedness implies infty-boundedness}.

\eqref{item:polynomial alpha-boundedness prop} $\Rightarrow$ \eqref{item:polynomial integrability prop}:
Let $Y_0\subseteq Y$ be as in Definition \ref{def:admissable spaces}. Then $[t\mapsto t^{k} T(t)x]\in L^{q}(\R_+;X)$ for all $k\in\{0,\ldots, n\}$ and $x\in Y_0$. Hence, for $f\in L^{1}(\R)\otimes Y_0$, Minkowski's inequality yields
\begin{align*}
&\Big(\int_{\R}  \Big\|\int_{\R} (t-s)^n T(t-s) f(s) \ud s\Big\|^q \ud t\Big)^{1/q} \\
&\leq \int_{\R} \Big(\int_{s}^\infty \|(t-s)^n  T(t-s) f(s)\|^q \ud t \Big)^{1/q} \ud s \leq C\int_{\R} \|f(s)\|  \ud s.
\end{align*}
Now the proof is concluded using Lemma \ref{lem:standardfact}.
\end{proof}

\subsection{Results under Fourier type assumptions\label{subsec:Fourier}}

Here we apply Theorem \ref{thm:abstract polynomial stability} to obtain polynomial stability results under assumptions on the Fourier type of the underlying space. The following theorem coincides with Proposition \ref{prop:general semigroups} for $p=1$. In the case where $\alpha=0$ it was already stated in \cite{BaEnPrSc06} that an improvement of Proposition \ref{prop:general semigroups} might be possible using ideas from \cite[\S4.2]{vanNeerven96a}, but no details are given there.

\begin{theorem}\label{thm:polynomial decay Fourier type}
Let $\alpha,\beta\in[0,\infty)$ and let $A$ be an injective sectorial operator with resolvent growth $(\alpha,\beta)$ on a Banach space $X$ with Fourier type $p\in[1,2]$.
Let $r\in[1,\infty]$ be such that $\frac{1}{r}=\frac{1}{p}-\frac{1}{p'}$, and let $\sigma,\tau\in[0,\infty)$ be such that $\sigma>\alpha-1$ and $\tau>\beta+\frac{1}{r}$.
Then for each $\rho\in[0,\min(\frac{\sigma+1}{\alpha}-1,\frac{\tau-r^{-1}}{\beta}-1))$ there exists a $C_{\rho}\in[0,\infty)$ such that
\begin{equation}\label{eq:polynomial decay Fourier type main}
\|T(t)\|_{\La(X^{\sigma}_{\tau},X)}\leq C_{\rho}t^{-\rho}\qquad (t\in[1,\infty)).
\end{equation}
If $p=2$ then \eqref{eq:polynomial decay Fourier type main} also holds for $\tau\geq \beta$ and $\rho\in[0,\infty)$ with $\rho<\frac{\sigma+1}{\alpha}-1$ and $\rho\leq \frac{\tau}{\beta}-1$.
\end{theorem}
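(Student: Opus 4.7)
My plan is to derive this theorem by combining the polynomial stability characterization of Theorem \ref{thm:abstract polynomial stability} with the Fourier-type multiplier estimate of Proposition \ref{prop:Lp-Lq multipliers Fourier type}, then interpolating via Lemma \ref{lem:interpolation}. Specifically, I will first establish the estimate at integer exponents $\rho = n \in \N_0$ on the ``critical'' pair $\sigma_n := \max(0,(n+1)\alpha - 1 + \delta)$, $\tau_n := (n+1)\beta + 1/r + \delta$ for an arbitrary small $\delta > 0$; interpolating between $(\sigma_n, \tau_n, n)$ and $(\sigma_{n+1}, \tau_{n+1}, n+1)$ via Lemma \ref{lem:interpolation}, together with the embeddings $X^{\sigma'}_{\tau'} \hookrightarrow X^{\sigma}_{\tau}$ for $(\sigma',\tau')\geq(\sigma,\tau)$, then recovers the full admissible range of $(\sigma, \tau, \rho)$.

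For integer $n$, set $Y := X^{\sigma_n}_{\tau_n}$. This $Y$ is $(A,n)$-admissible: condition (i) of Definition \ref{def:admissable spaces} follows from the commutation of $T(t)$ with $\Phi^{\sigma_n}_{\tau_n}(A)$, while (ii) holds with $Y_0 := X^{\sigma_n+1}_{\tau_n+n+2}$, dense in $Y$ by standard sectorial-calculus approximation and on which Proposition \ref{prop:general semigroups} gives enough decay to make $t \mapsto t^n T(t) y$ integrable. Moreover, the isomorphism $\Phi^{\sigma_n}_{\tau_n}(A) \colon X \to Y$ transfers Fourier type $p$ to $Y$. Theorem \ref{thm:abstract polynomial stability} then reduces the claim $\sup_{t\geq 0} \|t^n T(t)\|_{\La(Y,X)} < \infty$ to verifying, for some $\psi \in C_c^\infty(\R)$ with $\psi \equiv 1$ near $0$ and each $k \in \{n-1, n, n+1\} \cap \N$, that $\psi(\cdot) R(\ui\cdot, A)^k \in \Ma_{1,\infty}(\R; \La(Y,X))$ and $(1-\psi(\cdot)) R(\ui\cdot, A)^k \in \Ma_{p,p'}(\R; \La(Y,X))$. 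The first is implied by $L^1$-integrability of the symbol on $\supp(\psi)$, and by Proposition \ref{prop:Lp-Lq multipliers Fourier type} the second follows once $\|R(\ui\cdot, A)^k\|_{\La(Y,X)} \in L^r(\{|\xi|\geq 1\})$.

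It therefore suffices to prove two fractional-domain resolvent estimates: $\|R(\ui\xi,A)^k\|_{\La(X_{\tau_n},X)} \lesssim |\xi|^{k\beta-\tau_n}$ for $|\xi|\geq 1$, and $\|R(\ui\xi,A)^k\|_{\La(X^{\sigma_n},X)} \lesssim |\xi|^{\sigma_n-k\alpha}$ for $0 < |\xi| \leq 1$. Both rest on functional-calculus factorizations exploiting the commutativity
\[
(\lambda+A)^{-k}(1+A)^{-\tau} = \big((\lambda+A)^{-1}(1+A)^{-\tau/k}\big)^k
\]
(and its analogue through $\Phi^{\sigma/k}_0(A)$), together with the one-step bounds of Proposition \ref{prop:app-equivalence growth bounds}\eqref{item:app-equivalence growth bounds 2} and Corollary \ref{cor:app-growth and fractional domains}; inserting $k \leq n+1$ and $\sigma_n,\tau_n$ yields the required decay $|\xi|^{-1/r-\delta}$ at infinity and the integrable singularity $|\xi|^{-1+\delta}$ at zero. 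The case $p = 2$ is handled analogously with $r = \infty$, using the $L^\infty$-symbol/$L^2$-Plancherel form of Proposition \ref{prop:Lp-Lq multipliers Fourier type}, which further permits the endpoint $\rho = \tau/\beta - 1$. The main obstacle I anticipate is the bookkeeping across $k \in \{n-1, n, n+1\} \cap \N$: the factorization ratios $\tau/k$ and $\sigma/k$ may fall outside the direct range of Proposition \ref{prop:app-equivalence growth bounds} and Corollary \ref{cor:app-growth and fractional domains}, requiring supplementary use of the embeddings among fractional domains without exceeding the tight decay budgets $1/r + \delta$ and $1 - \delta$.
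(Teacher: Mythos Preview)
Your proposal is correct and follows the same overall route as the paper: reduce to integer $n$ via Lemma \ref{lem:interpolation}, verify the multiplier conditions of Theorem \ref{thm:abstract polynomial stability} by means of Proposition \ref{prop:Lp-Lq multipliers Fourier type}, and supply the needed resolvent bounds from Proposition \ref{prop:app-equivalence growth bounds} and Corollary \ref{cor:app-growth and fractional domains}. The difference lies in how $R(\ui\xi,A)^k$ is estimated from $X^{\sigma_n}_{\tau_n}$ to $X$. Your even split $(\lambda+A)^{-k}\Phi^{\sigma_n}_{\tau_n}(A)=\big((\lambda+A)^{-1}\Phi^{\sigma_n/k}_{\tau_n/k}(A)\big)^k$ runs precisely into the obstacle you anticipate: for $k<n+1$ the per-factor parameters $\tau_n/k-\beta$ and $\sigma_n/k$ can fall outside the ranges $\beta_0\in[0,1]$ of Proposition \ref{prop:app-equivalence growth bounds}\eqref{item:app-equivalence growth bounds 2} and $\alpha_0\in[0,\alpha]$ of Corollary \ref{cor:app-growth and fractional domains}. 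In those cases your stated bounds $|\xi|^{k\beta-\tau_n}$ and $|\xi|^{\sigma_n-k\alpha}$ are not what the factorization actually gives; one only gets uniform boundedness near zero and $|\xi|^{-k}$ at infinity. These weaker bounds still lie within the $L^1$ and $L^r$ budgets, so the argument can be repaired along the lines you indicate, but some care is needed.

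The paper sidesteps this bookkeeping by an asymmetric factorization. First, iterating Proposition \ref{prop:app-equivalence growth bounds} with $\alpha_0=\alpha$ and $\beta_0=0$ (so each factor $R(\ui\xi,A)\Phi^{\alpha}_{\beta}(A)$ is uniformly bounded) gives
\[
\sup_{\xi\in\R\setminus\{0\}}\|R(\ui\xi,A)^{j}\|_{\La(X^{n\alpha}_{n\beta},X)}<\infty\qquad(j\in\{0,\ldots,n\}).
\]
Then Proposition \ref{prop:app-equivalence growth bounds} and Corollary \ref{cor:app-growth and fractional domains} are applied once more to the part $\widetilde A$ of $A$ in $X^{n\alpha}_{n\beta}$, with the single pair $\alpha_0=\alpha-1+\delta\in[0,\alpha]$ and $\beta_0=\tfrac{1}{r}+\varepsilon\in[0,1]$, yielding one weighted factor $R(\ui\xi,A):X^{\mu}_{\nu}\to X^{n\alpha}_{n\beta}$. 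Composing gives, uniformly in $k\in\{1,\ldots,n+1\}$,
\[
\sup_{\xi\in\R\setminus\{0\}}\frac{|\xi|^{1-\delta}}{(1+|\xi|)^{1-\delta-\beta_0}}\,\|R(\ui\xi,A)^{k}\|_{\La(X^{\mu}_{\nu},X)}<\infty,
\]
from which both the $\Ma_{1,\infty}$ and $\Ma_{p,p'}$ conditions follow without any case distinction in $k$.
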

\begin{proof}
We prove the following equivalent statement: for all $s\geq0$ and $\delta,\veps>0$ there exists a constant $C_{s,\delta,\veps}\geq0$ such that
\begin{equation}\label{eq:polynomial decay Fourier type}
\|T(t)\|_{\La(X^{\mu}_{\nu},X)}\leq C_{s,\delta,\veps}t^{-s}  \qquad (t\in [1, \infty)),
\end{equation}
where $\mu= \max((s+1) \alpha -1 + \delta,0)$, $\nu = (s+1)\beta+\frac{1}{r}+\veps$ for $p\in[1,2)$, and $\nu=(s+1)\beta$ for $p=2$. By Lemma \ref{lem:interpolation} it suffices to consider $n:=s\in\N_{0}$,
and the case where $p=1$ follows from Proposition \ref{prop:general semigroups}. For $p\in (1, 2)$ set $\beta_{0} := \frac{1}{r}+\veps$, and for $p=2$ we let $\beta_0=0$. We may assume that $\beta_{0}\in [0,1)$.

By Proposition \ref{prop:app-equivalence growth bounds} and because $R(\ui\xi,A)$ commutes with $A^{\alpha}(1+A)^{-\alpha-\beta}$ for all $\xi\in\R\setminus\{0\}$, one has
\begin{equation}\label{eq:boundedness of resolvents}
\sup\{\|R(\ui\xi,A)^{k}\|_{\La(X^{n\alpha}_{n\beta},X)}\mid \xi\in\R\setminus\{0\}\}<\infty\qquad (k\in\{1,\ldots,n\}).
\end{equation}
Now, the part $\widetilde{A}$ of $A$ in $X^{n\alpha}_{n\beta}$ satisfies the conditions of Proposition \ref{prop:app-equivalence growth bounds} and Corollary \ref{cor:app-growth and fractional domains},
and $R(\ui\xi,\widetilde{A})=R(\ui\xi,A)\restriction_{X^{n\alpha}_{n\beta}}$ for all $\xi\in\R\setminus\{0\}$. Hence
\begin{equation}\label{eq:boundedness of resolvent on smaller space}
\Big\{\frac{\abs{\xi}^{1-\delta}}{(1+\abs{\xi})^{1-\delta-\beta_{0}}} R(\ui\xi,A)\Big| \xi\in \R\setminus\{0\}\Big\}\subseteq  \La(X^{\mu}_{\nu},X^{n\alpha}_{n\beta})
\end{equation}
is uniformly bounded. Let $k\in\{1,\ldots, n+1\}$. Then \eqref{eq:boundedness of resolvents} and \eqref{eq:boundedness of resolvent on smaller space} show that
\begin{equation}\label{eq:boundedness of power of resolvent}
\Big\{\frac{\abs{\xi}^{1-\delta}}{(1+\abs{\xi})^{1-\delta-\beta_{0}}} R(\ui\xi,A)^{k}\Big| \xi\in \R\setminus\{0\}\Big\}\subseteq\La(X^{\mu}_{\nu},X)
\end{equation}
is uniformly bounded. Let $\psi\in C_{c}^{\infty}(\R)$ be such that $\psi\equiv 1$ on $[-1,1]$. Since $\delta>0$, it follows from \eqref{eq:boundedness of power of resolvent}
and Proposition \ref{prop:Lp-Lq multipliers Fourier type} that
\[
\psi(\cdot)R(\ui\cdot,A)^{k}\in L^{1}(\R;\La(X^{\mu}_{\nu},X))\subseteq\Ma_{1,\infty}(\R;\La(X^{\mu}_{\nu},X)).
\]
Another application of \eqref{eq:boundedness of power of resolvent} yields $\|(1-\psi(\cdot))R(\ui\cdot,A)^{k}\|_{\La(X^{\mu}_{\nu},X)}\in L^{r}(\R)$.
Note that $X^{\mu}_{\nu}$ has Fourier type $p$, since $X^{\mu}_{\nu}$ is isomorphic to $X$. Hence Proposition \ref{prop:Lp-Lq multipliers Fourier type} yields
\[
(1-\psi(\cdot))R(\ui\cdot,A)^{k}\in \Ma_{p,p'}(\R;\La(X^{\mu}_{\nu},X)).
\]
Now Theorem \ref{thm:abstract polynomial stability} concludes the proof.
\end{proof}

\begin{remark}\label{rem:dependence constant Fourier type}
One can show that the constant $C_{\rho}$ in \eqref{eq:polynomial decay Fourier type main} depends only on the following variables: $\alpha$, $\beta$, $\sigma$, $\tau$, $\rho$, $\F_{p,X}$, the sectoriality constant $M(A)$ from \eqref{eq:sectoriality constant},
\[
M_{\alpha,\beta}:=\sup\Big\{\frac{\abs{\xi}^{\alpha}}{\abs{1+\xi}^{\alpha+\beta}}\|R(\ui\xi,A)\|_{\La(X)}\Big| \xi\in\ui\R\setminus\{0\}\Big\}
\]
and the semigroup growth constants $M$, $\w$ and $M_{\w}$ which appear in \eqref{eq:quantitative infty estimate}.
\end{remark}

It is an open question whether \eqref{eq:polynomial decay Fourier type main} also holds for $\rho=\min(\frac{\sigma+1}{\alpha}-1,\frac{\tau-r^{-1}}{\beta}-1)$ if $\alpha+\beta>0$.

A Hilbert space has Fourier type $2$ by Plancherel's identity. Hence we may distill from Theorem \ref{thm:polynomial decay Fourier type} the following important corollary, which in particular implies Theorem \ref{thm:polynomial decay HilbertIntro}.
It follows from Example \ref{ex:optimalitybeta} and Remark \ref{rem:analyticsemigroup} that, up to $\veps$ loss, the polynomial rate of decay in Corollary \ref{cor:polynomial decay Hilbert space} is optimal for $\alpha=0$ and $\tau=\beta\in[0,\infty)$, and for $\alpha=1$ and $\beta=0$. We do not know whether the rate of decay is also optimal for other values of $\alpha$, $\beta$, $\sigma$ and $\tau$.

\begin{corollary}\label{cor:polynomial decay Hilbert space}
Let $\alpha,\beta\in[0,\infty)$ and let $A$ be an injective sectorial operator with resolvent growth $(\alpha,\beta)$ on a Hilbert space.
Let $\sigma,\tau\in[0,\infty)$ be such that $\sigma>\alpha-1$ and $\tau\geq\beta$. Then for each $\rho\in[0,\infty)$ such that $\rho<\frac{\sigma+1}{\alpha}-1$ and $\rho\leq \frac{\tau}{\beta}-1$ there exists a $C_{\rho}\in[0,\infty)$ such that
\[
\|T(t)\|_{\La(X^{\sigma}_{\tau},X)}\leq C_{\rho}t^{-\rho}\qquad (t\in[1,\infty)).
\]
\end{corollary}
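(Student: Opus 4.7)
The plan is to obtain Corollary \ref{cor:polynomial decay Hilbert space} as an immediate specialization of Theorem \ref{thm:polynomial decay Fourier type} to the Hilbert space setting. The key observation is that by Plancherel's identity any Hilbert space has Fourier type $p = 2$, so I can simply invoke the previous theorem with this maximal value of $p$.

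With $p = 2$ the exponent $r \in [1,\infty]$ defined by $\tfrac{1}{r} = \tfrac{1}{p} - \tfrac{1}{p'}$ satisfies $\tfrac{1}{r} = 0$, i.e.\ $r = \infty$. The second, optimized statement of Theorem \ref{thm:polynomial decay Fourier type} covers precisely this case: it asserts the bound $\|T(t)\|_{\La(X^{\sigma}_{\tau},X)}\leq C_{\rho}t^{-\rho}$ on $[1,\infty)$ under the hypotheses $\sigma > \alpha - 1$, $\tau \geq \beta$, $\rho < \tfrac{\sigma+1}{\alpha}-1$ and $\rho \leq \tfrac{\tau}{\beta}-1$. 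These are exactly the conditions appearing in the corollary, so no further work is required: I just quote the $p=2$ clause of the theorem and we are done.

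The only real content therefore lies in Theorem \ref{thm:polynomial decay Fourier type} itself, whose $p=2$ case in turn relies on Proposition \ref{prop:Lp-Lq multipliers Fourier type} (with no $\varepsilon$-loss, since for $p=2$ one can take $\beta_0 = 0$ in the proof), on the resolvent-fractional domain identification of Proposition \ref{prop:app-equivalence growth bounds}/Corollary \ref{cor:app-growth and fractional domains}, and on the Fourier multiplier characterization of polynomial stability in Theorem \ref{thm:abstract polynomial stability}. There is no real obstacle at the level of this corollary; the statement is arranged precisely so that the Hilbert case comes out of the Fourier-type framework without any $\tfrac{1}{r}$-loss in $\tau$ and without any strict-inequality loss in $\rho$ with respect to the $\tau$-exponent. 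The only very minor point worth checking, which one sees by inspecting the proof of Theorem \ref{thm:polynomial decay Fourier type}, is that it is legitimate to take $\tau = \beta$ (rather than $\tau > \beta$) when $p = 2$, since then $\beta_0 = 0$ is admissible and the $L^{\infty}$ estimate of $\|R(i\cdot,A)^k\|_{\La(X^\mu_\nu,X)}$ is already sufficient to apply Plancherel, which is what the second clause of that theorem encodes.
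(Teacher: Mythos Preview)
Your proposal is correct and matches the paper's approach exactly: the corollary is stated as an immediate consequence of Theorem~\ref{thm:polynomial decay Fourier type} together with the fact that a Hilbert space has Fourier type $2$ by Plancherel's identity. The additional remarks you make about why $\tau=\beta$ is admissible when $p=2$ are accurate but go slightly beyond what the paper spells out at this point.
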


\begin{remark}\label{rem:comparison Fourier type and growth on X}
Corollary \ref{cor:growth of semigroup on X} yields a faster decay rate than Theorem \ref{thm:polynomial decay Fourier type} when $\|T(t)\|_{\La(X)}$ grows slowly as $t\to\infty$. More precisely, with notation as in Theorem \ref{thm:polynomial decay Fourier type}, let $\mu_{0}\in[0,\infty)$ be such that
\[
\min\big(\frac{\sigma}{\alpha},\frac{\tau}{\beta}\big)-\mu_{0}=\min\big(\frac{\sigma+1}{\alpha}-1,\frac{\tau-r^{-1}}{\beta}-1\big).
\]
If there exists a $\mu<\mu_{0}$ such that $\limsup_{t\to\infty}t^{-\mu}\|T(t)\|_{\La(X)}\!<\infty$ then Corollary \ref{cor:growth of semigroup on X} yields a sharper decay rate than Theorem \ref{thm:polynomial decay Fourier type}, namely
\[
\|T(t)\|_{\La(X^{\sigma}_{\tau},X)}\lesssim t^{-\rho}\qquad (t\in[1,\infty))
\]
for each $\rho<\min(\frac{\sigma}{\alpha},\frac{\tau}{\beta})-\mu$. Otherwise Theorem \ref{thm:polynomial decay Fourier type} yields at least as sharp a decay rate as Corollary \ref{cor:growth of semigroup on X}. In particular, on Hilbert spaces Corollary \ref{cor:polynomial decay Hilbert space} yields faster decay than Corollary \ref{cor:growth of semigroup on X} if $\alpha=0$ and $\|T(\cdot)\|_{\La(X)}$ grows at least linearly. Note also that in many cases \eqref{eq:better rates using literature} below yields a faster decay rate than Corollary \ref{cor:growth of semigroup on X}.
\end{remark}

\subsection{Results under type and cotype assumptions\label{subsec:cotype}}

Here we consider polynomial decay rates under type and cotype assumptions on the underlying space.

The following result also holds for $q=\infty$. However, in this case Proposition \ref{prop:general semigroups} yields a more general statement, since each Banach space has type $p=1$ and cotype $q=\infty$ and because a Banach space with nontrivial type also has finite cotype.

\begin{theorem}\label{thm:polynomial decay type}
Let $\alpha,\beta\in[0,\infty)$ and let $A$ be an injective sectorial operator with $R$-resolvent growth $(\alpha,\beta)$ on a Banach space $X$ with type $p\in[1,2]$ and cotype $q\in[2,\infty)$. Let $r\in[1,\infty]$ be such that $\frac{1}{r}=\frac{1}{p}-\frac{1}{q}$, and let $\sigma,\tau\in[0,\infty)$ be such that $\sigma>\alpha-1$ and $\tau>\beta+\frac{1}{r}$. Then for each $\rho<\min(\frac{\sigma+1}{\alpha}-1,\frac{\tau-r^{-1}}{\beta}-1)$ there exists a $C_{\rho}\in[0,\infty)$ such that
\[
\|T(t)\|_{\La(X^{\sigma}_{\tau},X)}\leq C_{\rho}t^{-\rho}\qquad (t\in[1,\infty)).
\]
If $p=q=2$ then \eqref{eq:polynomial decay Fourier type main} also holds for $\tau\geq\beta$ and $\rho\in[0,\infty)$ with $\rho<\frac{\sigma+1}{\alpha}-1$ and $\rho\leq \frac{\tau}{\beta}-1$.
\end{theorem}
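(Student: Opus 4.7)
The strategy mirrors the proof of Theorem \ref{thm:polynomial decay Fourier type}, with Proposition \ref{prop:Lp-Lq multipliers Fourier type} replaced by its $R$-bounded counterpart Proposition \ref{prop:Lp-Lq multipliers type}. The stronger $R$-resolvent growth hypothesis is precisely what upgrades the uniform resolvent estimates on fractional domain and range spaces into the $R$-bounds demanded by the latter multiplier theorem.

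By elementary manipulations, together with Lemma \ref{lem:interpolation} to interpolate between integer decay rates, it suffices to prove that for every $n\in\N_{0}$ and all $\delta,\veps>0$ one has
\[
\|T(t)\|_{\La(X^{\mu}_{\nu},X)}\lesssim t^{-n}\qquad (t\in[1,\infty)),
\]
where $\mu:=\max((n+1)\alpha-1+\delta,0)$ and $\nu:=(n+1)\beta+1/r+\veps$. Set $\beta_{0}:=1/r+\veps$, assumed to lie in $[0,1]$. Since $X^{\mu}_{\nu}$ is $(A,n)$-admissible by Proposition \ref{prop:general semigroups}, one applies Theorem \ref{thm:abstract polynomial stability} with $Y=X^{\mu}_{\nu}$, and it remains only to verify condition \eqref{item:polynomial resolvent multipliers} for a suitable triple $\psi,p,q$.

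Fix $\psi\in C_{c}^{\infty}(\R)$ with $\psi\equiv 1$ on $[-1,1]$ and $k\in\{n-1,n,n+1\}\cap\N$. Exactly as in the proof of Theorem \ref{thm:polynomial decay Fourier type}, applying Proposition \ref{prop:app-equivalence growth bounds} and Corollary \ref{cor:app-growth and fractional domains} to the part of $A$ in $X^{n\alpha}_{n\beta}$ (which inherits resolvent growth $(\alpha,\beta)$ because the resolvents commute with $\Phi^{n\alpha}_{n\beta}(A)$) yields the uniform bound
\[
\sup_{\xi\in\R\setminus\{0\}}\frac{|\xi|^{1-\delta}}{(1+|\xi|)^{1-\delta-\beta_{0}}}\|R(\ui\xi,A)^{k}\|_{\La(X^{\mu}_{\nu},X)}<\infty.
\]
Near the origin this dominates $\psi(\cdot)R(\ui\cdot,A)^{k}$ by $|\xi|^{\delta-1}$, which is integrable, so $\psi(\cdot)R(\ui\cdot,A)^{k}\in L^{1}(\R;\La(X^{\mu}_{\nu},X))\subseteq\Ma_{1,\infty}$ by Young's inequality. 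For the high-frequency part one invokes Proposition \ref{prop:Lp-Lq multipliers type}: since $q<\infty$ one has $r>1$, so for $\veps$ small enough $1/\beta_{0}>1/r=1/p-1/q$, and the $\Ma_{p,q}$-boundedness of $(1-\psi(\cdot))R(\ui\cdot,A)^{k}$ reduces to the $R$-boundedness of $\{(1+|\xi|)^{\beta_{0}}R(\ui\xi,A)^{k}\mid|\xi|\geq 1\}$ in $\La(X^{\mu}_{\nu},X)$.

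Via the isomorphism $\Phi^{\mu}_{\nu}(A):X\to X^{\mu}_{\nu}$ the latter becomes an $R$-boundedness assertion in $\La(X)$ for a family of the shape $(1+|\xi|)^{\beta_{0}}R(\ui\xi,A)^{k}A^{\mu}(1+A)^{-\mu-\nu}$. Using the commutation of $R(\ui\xi,A)$ with fractional powers of $A$ and the composition rule $\Phi^{\alpha_{1}}_{\beta_{1}}(A)\Phi^{\alpha_{2}}_{\beta_{2}}(A)=\Phi^{\alpha_{1}+\alpha_{2}}_{\beta_{1}+\beta_{2}}(A)$, one factors this product into $k$ resolvent factors paired with copies of $\Phi^{\alpha}_{\beta+\beta_{0}}(A)$, together with bounded functional-calculus remainders. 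Each paired factor is $R$-bounded in $\La(X)$ by Proposition \ref{prop:app-equivalence growth bounds}(3), whose identity trades $(\lambda+A)^{-1}\Phi^{\alpha}_{\beta+\beta_{0}}(A)$ against $\tfrac{(-\lambda)^{\alpha}}{(1-\lambda)^{\alpha+\beta}}(\lambda+A)^{-1}$ up to an $R$-bounded remainder; the surviving piece is $R$-bounded by the $R$-resolvent growth hypothesis. The stability of $R$-bounds under products, linear combinations, and closed absolutely convex hulls (see \eqref{eq:Kahane contraction}) then assembles the required $R$-bound. Theorem \ref{thm:abstract polynomial stability} thereby yields the desired decay, and the case $p=q=2$ reduces by Kwapie\'n's theorem to a Hilbert space, where Corollary \ref{cor:polynomial decay Hilbert space} both recovers the result and handles the endpoint $\tau\geq\beta$. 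The main technical obstacle is this $R$-boundedness step: the pointwise moment inequality underlying Corollary \ref{cor:app-growth and fractional domains} admits no direct $R$-bounded analogue, so one cannot simply transcribe the Fourier-type argument but must instead iterate the algebraic identity of Proposition \ref{prop:app-equivalence growth bounds}(3) and lean on the algebraic closure properties of the $R$-bound.
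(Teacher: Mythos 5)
Your proposal is correct and follows essentially the same route as the paper: reduce to integer decay rates, verify condition (2) of Theorem \ref{thm:abstract polynomial stability} by treating the low frequencies through an integrable $\La(X^{\mu}_{\nu},X)$-valued bound (hence $\Ma_{1,\infty}$) and the high frequencies through Proposition \ref{prop:Lp-Lq multipliers type}, with the $R$-resolvent growth hypothesis transferred to fractional domain/range spaces via Proposition \ref{prop:app-equivalence growth bounds}, and dispose of $p=q=2$ by Kwapie\'n's theorem and Corollary \ref{cor:polynomial decay Hilbert space}. The only cosmetic difference is that the paper splits the target spaces (uniform bounds in $\La(X^{\mu},X)$ near zero, $R$-bounds in $\La(X_{\nu},X)$ at infinity, the latter obtained by iterating part (2) of Proposition \ref{prop:app-equivalence growth bounds}) rather than working in $\La(X^{\mu}_{\nu},X)$ throughout, which changes nothing of substance.
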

\begin{proof}
The proof is similar to that of Theorem \ref{thm:polynomial decay Fourier type}. The case where $p=q=2$ is already contained in Corollary \ref{cor:polynomial decay Hilbert space}, since each Banach space with type $2$ and cotype $2$ is isomorphic to a Hilbert space, and because every uniformly bounded collection on a Hilbert space is $R$-bounded. So we may assume that $r\in(1,\infty)$ and derive \eqref{eq:polynomial decay Fourier type} for $n:=s\in\N_{0}$. Set $\beta_{0} := \frac{1}{r}+\veps$ and let $k\in\{1,\ldots, n+1\}$. We may suppose that $\beta_{0}\in(0,1)$. As in the proof of Theorem \ref{thm:polynomial decay Fourier type}, using Proposition \ref{prop:app-equivalence growth bounds} and Corollary \ref{cor:app-growth and fractional domains}, one sees that
\[
\{\abs{\xi}^{1-\delta}R(\ui\xi,A)^{k}\mid \xi\in\R\setminus\{0\},\abs{\xi}\leq 1\}\subseteq\La(X^{\mu},X)
\]
is uniformly bounded and that
\begin{equation}\label{eq:polynomial decay type 1}
\{\abs{\xi}^{\beta_{0}}R(\ui\xi,A)^{k}\mid \xi\in\R\setminus\{0\},\abs{\xi}\geq 1\}\subseteq\La(X_{\nu},X)
\end{equation}
is $R$-bounded. Now let $\psi\in C^{\infty}_{c}(\R)$ be such that $\psi \equiv 1$ on $[-1/2, 1/2]$ and such that $\supp(\psi)\subseteq[-1,1]$. Then Proposition \ref{prop:Lp-Lq multipliers Fourier type} shows that $\psi(\cdot)R(\ui\cdot,A)^{k}\in\Ma_{1,\infty}(\La(X^{\mu},X))$, and $(1-\psi(\cdot)R(\ui\cdot,A)^{k}\in \mathcal{M}_{p,q}(\La(X_{\nu},X))$ by the first statement in Proposition \ref{prop:Lp-Lq multipliers type}. Theorem \ref{thm:abstract polynomial stability} concludes the proof.
\end{proof}

A similar dependence on the underlying parameters as in Remark \ref{rem:dependence constant Fourier type} holds for the constant $C_{\rho}$ in Theorem \ref{thm:polynomial decay type}.

Using the second statement in Proposition \ref{prop:Lp-Lq multipliers type} we obtain the following improvement of Theorem \ref{thm:polynomial decay type} on Banach lattices, which allows one to deal with the limit case in the fractional domain exponent.

\begin{theorem}\label{thm:polynomial decay convex}
Let $\alpha,\beta\in[0,\infty)$ and let $A$ be an injective sectorial operator with $R$-resolvent growth $(\alpha,\beta)$ on a Banach lattice $X$ which is $p$-convex and $q$-concave for $p\in[1,2]$ and $q\in[2,\infty)$. Let $r\in(1,\infty]$ be such that $\frac{1}{r}=\frac{1}{p}-\frac{1}{q}$, and let $\sigma,\tau\in[0,\infty)$ be such that $\sigma>\alpha-1$ and $\tau\geq \beta+\frac{1}{r}$. Then for each $\rho\in[0,\infty)$ such that $\rho<\frac{\sigma+1}{\alpha}-1$ and $\rho\leq \frac{\tau-r^{-1}}{\beta}-1$ there exists a $C_{\rho}\in[0,\infty)$ such that
\[
\|T(t)\|_{\La(X^{\sigma}_{\tau},X)}\leq C_{\rho}t^{-\rho}\qquad (t\in[1,\infty)).
\]
\end{theorem}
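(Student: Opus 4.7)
The plan is to run the proof of Theorem \ref{thm:polynomial decay type} almost verbatim, with the single essential modification that the second (endpoint) statement of Proposition \ref{prop:Lp-Lq multipliers type}---available precisely under the $p$-convexity/$q$-concavity hypotheses present here---is invoked in place of its first (strict) statement. This allows the smoothness exponent $\beta_{0}$ used in that earlier proof to be taken equal to $1/r$ rather than $1/r+\varepsilon$, so that the strict inequalities $\tau>\beta+1/r$ and $\rho<(\tau-r^{-1})/\beta-1$ in Theorem \ref{thm:polynomial decay type} may be relaxed to $\geq$ and $\leq$ respectively. By the usual reparametrization and an appeal to Lemma \ref{lem:interpolation}, it suffices to show: for all $s\in\N_{0}$ and $\delta>0$ there exists $C_{s,\delta}\geq 0$ with
\[
\|T(t)\|_{\La(X^{\mu}_{\nu},X)}\leq C_{s,\delta}\,t^{-s}\qquad(t\in[1,\infty)),
\]
where $\mu=\max((s+1)\alpha-1+\delta,0)$ and $\nu=(s+1)\beta+\tfrac{1}{r}$, the key point being the exact exponent $1/r$ in $\nu$.

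Fix such $s=n\in\N_{0}$ and $k\in\{1,\ldots,n+1\}$. Proposition \ref{prop:app-equivalence growth bounds} and Corollary \ref{cor:app-growth and fractional domains} yield, as in the proof of Theorem \ref{thm:polynomial decay type}, uniform boundedness of $\{|\xi|^{1-\delta}R(\ui\xi,A)^{k}\mid |\xi|\leq 1\}$ in $\La(X^{\mu},X)$ and $R$-boundedness of $\{|\xi|^{1/r}R(\ui\xi,A)^{k}\mid|\xi|\geq 1\}$ in $\La(X_{\nu},X)$, now with the exact exponent $1/r$. Choose $\psi\in C_{c}^{\infty}(\R)$ with $\psi\equiv 1$ on $[-1/2,1/2]$ and $\supp(\psi)\subseteq[-1,1]$. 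The first bound together with Proposition \ref{prop:Lp-Lq multipliers Fourier type} (used with $p=1$, $q=\infty$, for which $X$ automatically has Fourier type $1$) gives $\psi(\cdot)R(\ui\cdot,A)^{k}\in\Ma_{1,\infty}(\R;\La(X^{\mu},X))$, hence in $\Ma_{1,\infty}(\R;\La(X^{\mu}_{\nu},X))$ via the continuous embedding $X^{\mu}_{\nu}\hookrightarrow X^{\mu}$. For the high-frequency piece, the endpoint statement of Proposition \ref{prop:Lp-Lq multipliers type} applies with $\tfrac{1}{r}=\tfrac{1}{p}-\tfrac{1}{q}$: since $X_{\nu}$ is Banach-space isomorphic to $X$ via $(1+A)^{-\nu}$, it is a complemented subspace of the $p$-convex Banach lattice $X$, which has finite cotype, while the target $X$ is itself $q$-concave with $q<\infty$. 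This yields $(1-\psi(\cdot))R(\ui\cdot,A)^{k}\in\Ma_{p,q}(\R;\La(X_{\nu},X))\subseteq\Ma_{p,q}(\R;\La(X^{\mu}_{\nu},X))$. The implication $\eqref{item:polynomial resolvent multipliers}\Rightarrow\eqref{item:polynomial alpha-boundedness}$ of Theorem \ref{thm:abstract polynomial stability}, applied with $Y=X^{\mu}_{\nu}$ (which is $(A,n)$-admissible by Proposition \ref{prop:general semigroups}), then delivers $\sup_{t\geq 0}\|t^{n}T(t)\|_{\La(X^{\mu}_{\nu},X)}<\infty$, as required.

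The only point requiring care---and thus the likely obstacle---is verifying that the lattice hypothesis of the endpoint multiplier theorem is genuinely met by the abstract fractional-domain space $X_{\nu}$, which is defined via functional calculus and is not a priori a sublattice of $X$. The resolution is that the property of being a complemented subspace of a $p$-convex Banach lattice with finite cotype is invariant under Banach-space isomorphism, and $\Phi^{0}_{\nu}(A)=(1+A)^{-\nu}$ provides such an isomorphism $X\to X_{\nu}$ by \eqref{eq:norm using Phi}. The limiting case $r=\infty$ (so $p=q$ and the exponent $1/r$ vanishes) is handled uniformly by the same argument.
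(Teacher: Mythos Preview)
Your proof is correct and follows exactly the approach indicated by the paper, which gives no proof beyond the sentence ``Using the second statement in Proposition~\ref{prop:Lp-Lq multipliers type} we obtain the following improvement of Theorem~\ref{thm:polynomial decay type}.'' You have correctly filled in the details, including the one point requiring care: that the endpoint hypothesis of Proposition~\ref{prop:Lp-Lq multipliers type} on the domain space $X_{\nu}$ is inherited via the Banach-space isomorphism $\Phi^{0}_{\nu}(A):X\to X_{\nu}$.
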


We do not know whether the $R$-boundedness assumption in Theorems \ref{thm:polynomial decay type} and \ref{thm:polynomial decay convex} is necessary. This question is relevant even in the case where $\alpha=\beta=0$, cf.~the remark following Corollary \ref{cor:R-boundedness for free}.

\begin{remark}\label{rem:comparison type and Fourier type}
Each Banach space $X$ with Fourier type $p\in[1,2]$ has type $p$ and cotype $p'$, but the converse does not hold in general. In particular, if $X=L^{u}(\Omega)$ for $u\in[1,\infty)$ and for some measure space $\Omega$, then $X$ has Fourier type $\wt{p}=\min(u,u')$, type $p=\min(u,2)$ and cotype $q=\max(u,2)$. In this case the parameter $\frac{1}{r}$ in Theorems \ref{thm:polynomial decay type} and \ref{thm:polynomial decay convex} is strictly smaller than in Theorem \ref{thm:polynomial decay Fourier type} for $u\in[1,\infty)\setminus\{2\}$. However, the $R$-boundedness assumption on the resolvent of $A$ is in general stronger than the assumption in Theorem \ref{thm:polynomial decay Fourier type}.

We suspect that the $R$-boundedness condition in Theorems \ref{thm:polynomial decay type} and \ref{thm:polynomial decay convex} can be removed at the cost of a larger parameter $\frac{1}{r}$. For $\alpha=\beta=0$ this is indeed the case, with $\frac{1}{r}=2(\frac{1}{p}-\frac{1}{q})$, as is shown in Corollary \ref{cor:R-boundedness for free}.
\end{remark}

\subsection{Results for asymptotically analytic semigroups}

Here we consider polynomial stability for the asymptotically analytic semigroups from \cite{Blake99}. Define the \emph{non-analytic growth bound} $\zeta(T)$ of a $C_{0}$-semigroup $(T(t))_{t\geq 0}$ on a Banach space $X$ as
\[
\zeta (T)  := \inf \Big\{ \omega\in\R \Big|  \sup_{t>0}e^{-\omega t}\| T(t)-S(t)\|<\infty\;\mbox{for some $S\in\mathcal{H}(\La(X))$}\Big\},\]
where $\mathcal{H}(\La(X))$ is the set of $S\colon(0,\infty)\to\La(X)$ having an exponentially bounded analytic extension to some sector containing $(0,\infty)$.
One says that $(T(t))_{t\geq 0}$ is \emph{asymptotically analytic} if $\zeta(T)<0$. In this case $s_{0}^{\infty}(-A)<0$, where $s_{0}^{\infty}(-A)$  is the infimum over all $\w\in\R$ for which there exists an $R>0$ such that
\[
\{\lambda\in\C\mid \Real(\lambda)\geq\w,\abs{\Imag(\lambda)}\geq R\}\subseteq\rho(-A)
\]
and
\[
\sup\{\|(\lambda+A)^{-1}\|_{\La(X)}\mid \Real(\lambda)\geq\w,\abs{\Imag(\lambda)}\geq R\}<\infty.
\]
The converse implication holds if $X$ is a Hilbert space. More generally, it was shown in \cite[Theorem 3.6]{Batty-Srivastava03} that $\zeta(T)<0$ if and only if $s_{0}^{\infty}(-A)<0$ and there exist $R>0$ and $\psi\in C^{\infty}_{c}(\R)$ such that $\ui(\R\setminus [-R,R])\subseteq\rho(A)$, $\psi\equiv 1$ on $[-R,R]$ and
\[
(1-\psi(\cdot))R(\ui\cdot,A)\in \Ma_{p,p}(\R;\La(X))
\]
for some (in which case it holds for all) $p\in[1,\infty)$.

 Note that if $(T(t))_{t\geq 0}$ is analytic, and in particular if $A$ is bounded, then trivially $\zeta(T)=-\infty$. More generally, if $(T(t))_{t\geq 0}$ is eventually differentiable then $\zeta(T)=-\infty$. For these facts and for more on the non-analytic growth bound see \cite{Blake99,BaBlSr03,Batty-Srivastava03}.

\begin{theorem}\label{thm:asymptotically analytic semigroups}
Let $\alpha\in[0,\infty)$ and let $A$ be an injective sectorial operator with resolvent growth $(\alpha,0)$ on a Banach space $X$. Suppose that $(T(t))_{t\geq 0}$ is asymptotically analytic, and let $\sigma\in[0,\infty)$ be such that $\sigma>\alpha-1$.
Then for each $\rho\in[0,\frac{\sigma+1}{\alpha}-1)$ there exists a $C_{\rho}\in[0,\infty)$ such that
\[
\|T(t)\|_{\La(X^{\sigma},X)}\leq C_{\rho}t^{-\rho}\qquad (t\in[1,\infty)).
\]
\end{theorem}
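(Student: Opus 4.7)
The plan is to apply the Fourier multiplier characterization of polynomial stability in Theorem \ref{thm:abstract polynomial stability} in combination with the spectral description of asymptotic analyticity in \cite[Theorem 3.6]{Batty-Srivastava03}. By Lemma \ref{lem:interpolation} it suffices to prove $\|T(t)\|_{\La(X^{\mu},X)}\lesssim t^{-n}$ on $[1,\infty)$ for each $n\in\N_{0}$ and each $\delta>0$, where $\mu:=\max((n+1)\alpha-1+\delta,0)$; interpolation between consecutive integer values then yields every $\rho<(\sigma+1)/\alpha-1$ in the stated range. Fix such $n$ and $\delta$, set $Y:=X^{\mu}$ (which is $(A,n)$-admissible by Proposition \ref{prop:general semigroups}), and reduce, via Theorem \ref{thm:abstract polynomial stability}, to exhibiting $\psi\in\Ccinf(\R)$, $p\in[1,\infty)$ and $q\in[p,\infty]$ with $\psi(\cdot)R(\ui\cdot,A)^{k}\in\Ma_{1,\infty}(\R;\La(X^{\mu},X))$ and $(1-\psi(\cdot))R(\ui\cdot,A)^{k}\in\Ma_{p,q}(\R;\La(X^{\mu},X))$ for each $k\in\{n-1,n,n+1\}\cap\N$. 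I would take $\psi$ and $p$ from the Batty--Srivastava characterization, so that $\psi\equiv 1$ on $[-R,R]$ for some $R>0$ and $(1-\psi(\cdot))R(\ui\cdot,A)\in\Ma_{p,p}(\R;\La(X))$.

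For the localized piece I would iterate Corollary \ref{cor:app-growth and fractional domains}. Since $R(\ui\xi,A)$ commutes with fractional powers of $A$, it sends $X^{j\alpha_{0}}$ into $X^{(j-1)\alpha_{0}}$ with the same bound as $R(\ui\xi,A)\colon X^{\alpha_{0}}\to X$, yielding $\|R(\ui\xi,A)^{k}\|_{\La(X^{k\alpha_{0}},X)}\lesssim|\xi|^{k(\alpha_{0}-\alpha)}$ for $\alpha_{0}\in[0,\alpha]$ and $|\xi|\leq 1$. Choosing $\alpha_{0}=\mu/k$ when $\mu\leq k\alpha$ and otherwise invoking the continuous inclusion $X^{\mu}\hookrightarrow X^{k\alpha}$, one obtains $\|R(\ui\xi,A)^{k}\|_{\La(X^{\mu},X)}\lesssim|\xi|^{\min(\mu-k\alpha,0)}$ near the origin. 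Since $\mu-k\alpha\geq\delta-1>-1$ for every $k\leq n+1$, this estimate is integrable, placing $\psi(\cdot)R(\ui\cdot,A)^{k}$ in $L^{1}(\R;\La(X^{\mu},X))\hookrightarrow\Ma_{1,\infty}(\R;\La(X^{\mu},X))$.

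For the non-local piece I would use that $\Ma_{p,p}(\R;\La(X))$ is a Banach algebra under pointwise composition of symbols, so iterating the Batty--Srivastava multiplier $k$ times gives $(1-\psi)^{k}R(\ui\cdot,A)^{k}\in\Ma_{p,p}(\R;\La(X))$. The decomposition
\[
(1-\psi)R(\ui\cdot,A)^{k}=(1-\psi)^{k}R(\ui\cdot,A)^{k}+\eta(\cdot)R(\ui\cdot,A)^{k},\qquad \eta:=(1-\psi)\bigl(1-(1-\psi)^{k-1}\bigr),
\]
concentrates the remaining information in $\eta$, which is smooth, compactly supported and vanishes on $[-R,R]$. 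By the resolvent growth $(\alpha,0)$ the symbol $R(\ui\cdot,A)^{k}$ is uniformly bounded on $\supp(\eta)$, so $\eta(\cdot)R(\ui\cdot,A)^{k}\in L^{1}(\R;\La(X))\hookrightarrow\Ma_{1,\infty}$; combined with the first summand and the continuous embedding $X^{\mu}\hookrightarrow X$, this gives $(1-\psi(\cdot))R(\ui\cdot,A)^{k}\in\Ma_{p,p}(\R;\La(X^{\mu},X))$, completing the application of Theorem \ref{thm:abstract polynomial stability}. The main subtlety I anticipate lies in this algebra step when $k\geq 2$: one must confirm that the $k$-fold composition of the multiplier operator $T_{(1-\psi)R(\ui\cdot,A)}$ corresponds exactly to the pointwise product $(1-\psi(\cdot))^{k}R(\ui\cdot,A)^{k}$, which rests on the commutativity of the operator family $\{R(\ui\xi,A)\mid\xi\in\R\setminus\{0\}\}$ and on carefully verifying that no boundary contribution at $\xi=0$ is missed when splitting off $\eta$.
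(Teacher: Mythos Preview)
Your argument follows the same route as the paper: reduce to integer $n$ via Lemma~\ref{lem:interpolation}, verify condition \eqref{item:polynomial resolvent multipliers} of Theorem~\ref{thm:abstract polynomial stability} with the cutoff $\psi$ and exponent $p=q$ coming from the Batty--Srivastava characterization, handle the low-frequency piece by showing $\psi(\cdot)R(\ui\cdot,A)^{k}\in L^{1}(\R;\La(X^{\mu},X))$ via Corollary~\ref{cor:app-growth and fractional domains}, and obtain the high-frequency $\Ma_{p,p}$ property of the powers from the algebra structure. The paper in fact just asserts $(1-\psi(\cdot))R(\ui\cdot,A)^{k}\in\Ma_{p,p}(\R;\La(X))$ for all $k\leq n+1$ without spelling out the algebra step you make explicit.

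There is, however, one genuine slip in your treatment of the remainder $\eta(\cdot)R(\ui\cdot,A)^{k}$. You place the \emph{symbol} in $L^{1}(\R;\La(X))$ and conclude $\Ma_{1,\infty}$; but a term in $\Ma_{1,\infty}$ cannot be added to one in $\Ma_{p,p}$ to stay in $\Ma_{p,p}$, so the conclusion $(1-\psi(\cdot))R(\ui\cdot,A)^{k}\in\Ma_{p,p}$ does not follow. The repair is easy. Since $\eta\in\Ccinf(\R)$ with $\supp(\eta)\subseteq\{|\xi|\geq R\}$ and every $\xi$-derivative of $R(\ui\xi,A)^{k}$ is a polynomial in resolvents (hence uniformly bounded on $\supp(\eta)$), the map $\eta(\cdot)R(\ui\cdot,A)^{k}$ is smooth with compact support, so its inverse Fourier transform lies in $L^{1}(\R;\La(X))$ and Young's inequality gives $\eta(\cdot)R(\ui\cdot,A)^{k}\in\Ma_{p,p}$. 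Equivalently, replace $\psi$ by $\tilde\psi\in\Ccinf(\R)$ with $\tilde\psi\equiv 1$ on $\supp(\psi)$; then $(1-\tilde\psi)=(1-\tilde\psi)(1-\psi)^{k}$, so $(1-\tilde\psi(\cdot))R(\ui\cdot,A)^{k}$ is a scalar $\Ma_{p,p}$-multiplier times $\bigl((1-\psi(\cdot))R(\ui\cdot,A)\bigr)^{k}\in\Ma_{p,p}$, while $\tilde\psi(\cdot)R(\ui\cdot,A)^{k}$ falls into the $\Ma_{1,\infty}$-slot of Theorem~\ref{thm:abstract polynomial stability} by your low-frequency estimate. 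Finally, your worry about the algebra identity $T_{m}^{k}=T_{m^{k}}$ for $m=(1-\psi)R(\ui\cdot,A)$ is unfounded: the symbol vanishes on a neighbourhood of $\xi=0$, so no boundary contribution arises and the composition identity holds in $\Ma_{p,p}(\R;\La(X))$ in the standard way.
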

\begin{proof}
It suffices to obtain \eqref{eq:polynomial decay Fourier type} with $\mu= \max((n+1) \alpha -1 + \delta,0)$ and $\nu=0$ for $n\in\N_{0}$.
There exist $R\in(0,\infty)$, $\psi\in C^{\infty}_{c}(\R)$ and $p\in[1,\infty)$ such that
\begin{equation}\label{eq:multiplier in Mp}
(1-\psi(\cdot))R(\ui\cdot,A)^{k}\in\Ma_{p,p}(\R;\La(X)) \qquad (k\in\{0,\ldots, n+1\}).
\end{equation}
Since the inclusion $X^{\mu}\subseteq X$ is continuous, \eqref{eq:multiplier in Mp} also holds with $\La(X)$ replaced by $\La(X^{\mu},X)$. It follows as in the proof of Theorem \ref{thm:polynomial decay Fourier type} that
\[
\psi(\cdot)R(\ui\cdot,A)^{k}\in L^{1}(\R;\La(X^{\mu},X)\subseteq\Ma_{1,\infty}(\R;\La(X^{\mu},X)) \qquad (k\in\{0,\ldots, n+1\}).
\]
Now Theorem \ref{thm:abstract polynomial stability} yields the required estimate.
\end{proof}

\begin{remark}\label{rem:analyticsemigroup}
An injective sectorial operator $A$ of angle $\ph\in(0,\pi/2)$ has resolvent growth $(1,0)$. The semigroup $(T(t))_{t\geq 0}$ generated by $-A$ is analytic and for any $\sigma\geq 0$ one has
\[
\|T(t)\|_{\La(X^{\sigma},X)}\lesssim t^{-\sigma}\qquad (t\in[1,\infty)).
\]
This follows from \cite[Proposition 2.6.11]{Haase06a}.
This decay rate is optimal for the multiplication semigroup $(T(t))_{t\geq 0}$ on $L^p[0,\infty)$, $p\in[1,\infty)$, given by $T(t)f(s)=\ue^{-ts}f(s)$ for $f\in L^{p}[0,\infty)$ and $t,s\geq0$.
\end{remark}

\subsection{Necessary conditions}

In this subsection we study the necessity of the assumptions in our results.

\subsubsection{Spectral conditions}

The following lemma, an extension of \cite[Proposition 6.4]{BaChTo16}, shows that one can deduce spectral properties of an operator $A$ given uniform decay on suitable subspaces of the associated semigroup. The proof follows that of \cite[Proposition 6.4]{BaChTo16} and uses the Hille--Phillips functional calculus for semigroup generators. For more on this calculus see \cite[Section 3.3]{Haase06a} or \cite[Chapter XV]{Hille-Phillips57}.

\begin{lemma}\label{lem:resolventconv}
Let $-A$ be the generator of a $C_0$-semigroup $(T(t))_{t\geq 0}$ on a Banach space $X$. Suppose that there exist $\alpha,\beta\in\N_{0}$, $\eta\in\rho(-A)$, and a sequence $(t_{n})_{n\in\N}\subseteq[0,\infty)$ such that
\begin{equation}\label{eq:resolventconv}
\lim_{n\to \infty} \|T(t_{n}) A^\alpha (\eta+A)^{-\alpha-\beta}\|_{\La(X)}= 0.
\end{equation}
Then $\overline{\C_{-}}\setminus\{0\}\subseteq\rho(A)$.
\end{lemma}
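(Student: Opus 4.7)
Writing $\mu := -\lambda$, the task is to show that $\mu + A : D(A) \to X$ is bijective for every $\mu$ with $\Real(\mu) \geq 0$ and $\mu \neq 0$. I will introduce the bounded operator $P := A^\alpha (\eta + A)^{-\alpha - \beta}$, which commutes with $T(\cdot)$ and with $A$, together with the truncated Laplace transform $R_n := \int_0^{t_n} \ue^{-\mu s} T(s)\,\ud s$ and the error $S_n := \ue^{-\mu t_n} T(t_n)$. A routine integration by parts shows $(\mu + A) R_n = I - S_n$ on all of $X$ for every $\mu \in \C$, and the hypothesis combined with $\Real(\mu) \geq 0$ gives $\|S_n P\|_{\La(X)} \leq \|T(t_n)P\|_{\La(X)} \to 0$.

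\textbf{A spectral-continuity estimate.} The crux of the proof is the bound
\[
\|Px - c\, x\|_X \leq C_\mu \|(\mu + A) x\|_X \qquad (x \in D(A)),
\]
with $c := (-\mu)^\alpha (\eta - \mu)^{-\alpha - \beta}$; the constant $c$ is nonzero because $\mu \neq 0$ and $\mu \neq \eta$ (the latter since $\eta \in \rho(-A)$). To prove this, I first expand $P$ using the identity $A(\eta + A)^{-1} = I - \eta(\eta + A)^{-1}$ as
\[
P = \sum_{j=0}^{\alpha} \binom{\alpha}{j}(-\eta)^j (\eta + A)^{-j - \beta},
\]
and then exploit the elementary resolvent perturbation $(\eta + A) x = (\eta - \mu) x + (\mu + A) x$, iterated $m$ times to yield $(\eta + A)^{-m} x = (\eta - \mu)^{-m} x + O(\|(\mu+A)x\|_X)$. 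Substituting this into the expansion and recognising the resulting finite geometric series as $(-\mu/(\eta - \mu))^\alpha$ gives the announced form of $c$. An entirely analogous estimate holds for $P^*$, using $((\eta + A)^{-1})^* = (\eta + A^*)^{-1}$.

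\textbf{Injectivity, closed range, and dense range.} Given $x_k \in D(A)$ with $\|x_k\|_X = 1$ and $r_k := (\mu + A) x_k \to 0$, the identity of the setup step, applied to $x_k$ and then multiplied by $P$ using commutativity, gives
\[
\|P x_k\|_X \leq \|R_n P\|_{\La(X)}\,\|r_k\|_X + \|S_n P\|_{\La(X)}\,\|x_k\|_X.
\]
Letting $k \to \infty$ and then $n \to \infty$ forces $\|P x_k\|_X \to 0$, which contradicts $\|P x_k\|_X \to |c| > 0$ obtained from the spectral-continuity estimate. Hence $\mu + A$ is bounded below on $D(A)$, giving both injectivity (the case $r_k = 0$) and closed range. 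For dense range I pass to the adjoint: if $x^* \in D(A^*)$ satisfies $(\mu + A^*) x^* = 0$, then differentiating $t \mapsto \langle T(t) x, x^* \rangle$ for $x \in D(A)$ and extending by density gives $\langle T(t) x, x^* \rangle = \ue^{\mu t}\langle x, x^* \rangle$ for all $x \in X$, so
\[
|\langle P x, x^* \rangle| \leq \ue^{-\Real(\mu) t_n}\, \|T(t_n) P\|_{\La(X)}\,\|x\|_X\,\|x^*\|_{X^*} \to 0,
\]
whence $P^* x^* = 0$. The adjoint form of the spectral-continuity estimate forces $x^* = 0$, so the range of $\mu + A$ is dense. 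Combined with closed range this yields bijectivity, and therefore $\lambda = -\mu \in \rho(A)$.

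\textbf{Main obstacle.} The genuinely new computation is the spectral-continuity estimate: upgrading the pointwise equality $Px = cx$ on true eigenvectors to a quantitative norm bound for approximate eigenvectors of $-A$. Once this is available, the three-step argument (injectivity, closed range, dense range) is essentially bookkeeping. The vanishing of $c$ at $\mu = 0$ is precisely why the origin must be excluded from $\overline{\C_-}$ in the conclusion.
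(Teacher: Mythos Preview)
Your proof is correct and takes a genuinely different route from the paper's. The paper argues via the Hille--Phillips functional calculus: setting $f_t(\lambda)=\ue^{-t\lambda}\lambda^{\alpha}(\eta+\lambda)^{-\alpha-\beta}$, one identifies $f_t(A)=T(t)A^{\alpha}(\eta+A)^{-\alpha-\beta}$ and invokes the spectral inclusion theorem $f_t(\sigma(A))\subseteq\sigma(f_t(A))$, so that for any $\lambda\in\sigma(A)\setminus\{0\}$ the scalar $|f_{t_n}(\lambda)|=\ue^{-\Real(\lambda)t_n}|\lambda|^{\alpha}|\eta+\lambda|^{-\alpha-\beta}$ is bounded by $\|f_{t_n}(A)\|\to 0$, forcing $\Real(\lambda)>0$. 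This is short and conceptual but leans on nontrivial machinery (the HP calculus and its spectral mapping theorem).

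You instead give a self-contained approximate-eigenvector argument: the algebraic identity $Px-cx=\text{(bounded operator)}\,(\mu+A)x$, obtained by iterating $(\eta+A)^{-1}x=(\eta-\mu)^{-1}x-(\eta-\mu)^{-1}(\eta+A)^{-1}(\mu+A)x$, plays exactly the role that spectral mapping plays in the paper, and the truncated Laplace transform identity $(\mu+A)R_n=I-S_n$ replaces the functional-calculus representation. Your approach is longer but entirely elementary, requiring nothing beyond the resolvent identity and basic semigroup facts; it also makes transparent why $\mu=0$ must be excluded (the constant $c$ vanishes there). One small remark: in the dense-range step you implicitly use that an $x^*$ annihilating $\mathrm{ran}(\mu+A)$ automatically lies in $D(A^*)$ with $(\mu+A^*)x^*=0$; this is immediate from the definition of $A^*$, but worth stating explicitly.
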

\begin{proof}
Without loss of generality we may consider $\beta\in\N$ and $\eta>\w_{0}(T)$. Let $t\geq0$ and set $f_{t}(\lambda):=\ue^{-t\lambda}\lambda^{\alpha}(\eta+\lambda)^{-\alpha-\beta}$ for $\lambda\in\C$ with $\Real(\lambda)>-\eta$. Let
\[
k(s) := \left\{\!
           \begin{array}{ll}\!
             \frac{1}{(\alpha+\beta-1)!}\frac{d^\alpha}{ds^\alpha} (s^{\alpha+\beta-1}e^{-\eta s}), & s\in(0,\infty), \\
             \!0, & s\in(-\infty,0].
           \end{array}
         \right.
\]
Then $f_{t}$ is the Laplace transform of $\delta_{t}\ast k$, where $\delta_{t}$ is the Dirac point mass at $t$. Moreover, $f_t(A)$ is defined through the Hille--Phillips functional calculus for $A$ and
\[
f_{t}(A) = T(t) A^{\alpha}(\eta+A)^{-\alpha-\beta}.
\]
By the spectral inclusion theorem for the Hille--Phillips functional calculus in \cite[Theorem 16.3.5]{Hille-Phillips57} one obtains $f_{t}(\sigma(A))\subseteq\sigma(f_{t}(A))$. Let $\lambda\in \sigma(A)\setminus \{0\}$ and $n\in\N$. Then $f_{t_{n}}(\lambda)\in \sigma(f_{t_{n}}(A))$, so \cite[Corollary IV.1.4]{Engel-Nagel00} shows that
\[
e^{-\Real(\lambda) t_{n}} \frac{|\lambda|^\alpha}{|\eta + \lambda|^{\alpha+\beta}}=  |f_{t_{n}}(\lambda)|\leq \|f_{t_{n}}(A)\|= \|T(t_{n}) A^{\alpha}(\eta+A)^{-\alpha-\beta}\|.
\]
This concludes the proof since the right-hand side tends to zero as $n\to \infty$.
\end{proof}

If $\eta+A$ is a sectorial operator in Lemma \ref{lem:resolventconv}, then one may consider $\beta\in[0,\infty)$ in \eqref{eq:resolventconv}.
Similarly, if $A$ is a sectorial operator then one may let $\alpha\in[0,\infty)$.

A similar statement as in the following proposition can be obtained for more general subspaces. It follows from Example \ref{ex:mult} that the conclusion is sharp. 

\begin{proposition}\label{prop:decay implies resolvent bounds2}
Let $A$ be an injective sectorial operator such that $-A$ generates a $C_0$-semigroup $(T(t))_{t\geq 0}$ on $X$. Suppose that there exist $\alpha\in \{0\}\cup[1, \infty)$, $\beta\in[0,\infty)$ and a $g\in L^1(\R_+)$ such that $\|T(t)\|_{\La(X_\beta^{\alpha},X)}\leq g(t)$ for $t\geq 0$.
Then $\overline{\C_{-}}\setminus\{0\}\subseteq\rho(A)$ and
\begin{equation}\label{eq:familyconvrbdd}
\{\lambda^{\alpha}(1+\lambda)^{-\alpha-\beta} (\lambda+A)^{-1}\mid \lambda\in\overline{\C_{+}}\setminus\{0\}\}\subseteq\La(X)
\end{equation}
is $R$-bounded. In particular, $A$ has $R$-resolvent growth $(\alpha, \beta)$. Furthermore, if $\alpha=0$ then also $0\in \rho(A)$.
\end{proposition}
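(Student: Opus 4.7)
The plan is to derive both conclusions from an integral representation of $(\lambda+A)^{-1}\Phi^{\alpha}_{\beta}(A)$ together with Proposition~\ref{prop:app-equivalence growth bounds}(3). By \eqref{eq:norm using Phi}, the hypothesis $\|T(t)\|_{\La(X^{\alpha}_{\beta},X)}\leq g(t)$ transfers to $\|T(t)\Phi^{\alpha}_{\beta}(A)\|_{\La(X)}\leq Cg(t)$ with $g\in L^{1}(\R_{+})$. Choosing $\eta$ larger than $\w_{0}(T)$, and using that $(1+A)^{\alpha+\beta}(\eta+A)^{-\alpha-\beta}$ is bounded on $X$ (a suitable functional-calculus power of the sectorial operator $A$), a sequence $t_{n}\to\infty$ with $g(t_{n})\to 0$ (which exists since $g$ is integrable) yields $\|T(t_{n})A^{\alpha}(\eta+A)^{-\alpha-\beta}\|\to 0$. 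Lemma~\ref{lem:resolventconv}, in the extension to fractional exponents for sectorial operators noted directly after its proof, then gives $\overline{\C_{-}}\setminus\{0\}\subseteq\rho(A)$.

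Next, the strong Laplace identity $(\lambda+A)^{-1}=\int_{0}^{\infty}\ue^{-\lambda t}T(t)\,\ud t$ for $\Real(\lambda)>\w_{0}(T)$ composes with $\Phi^{\alpha}_{\beta}(A)$, which commutes with the semigroup, to give
\[
(\lambda+A)^{-1}\Phi^{\alpha}_{\beta}(A) = \int_{0}^{\infty}\ue^{-\lambda t}T(t)\Phi^{\alpha}_{\beta}(A)\,\ud t.
\]
The right-hand side is absolutely convergent for every $\lambda\in\overline{\C_{+}}$ and analytic on $\C_{+}$, while the left-hand side is defined and continuous on $\overline{\C_{+}}\setminus\{0\}$ by the spectral inclusion just established, so analytic continuation extends the identity to all of $\overline{\C_{+}}\setminus\{0\}$. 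Applying Minkowski's inequality to this Bochner integral together with the Kahane contraction principle (using that $|\ue^{-\lambda t}|\leq 1$ for $\lambda\in\overline{\C_{+}}$ and $t\geq 0$) then delivers $R$-boundedness of $\{(\lambda+A)^{-1}\Phi^{\alpha}_{\beta}(A)\mid \lambda\in\overline{\C_{+}}\setminus\{0\}\}$ with bound at most $2C\|g\|_{L^{1}(\R_{+})}$.

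To extract \eqref{eq:familyconvrbdd} from this, I would apply Proposition~\ref{prop:app-equivalence growth bounds}(3) with $\beta_{0}=0$ on $\Omega=\overline{\C_{+}}\setminus(S_{\ph}\cup\{0\})$; subtracting the $R$-bounded family obtained above yields $R$-boundedness of $\{(-\lambda)^{\alpha}(1-\lambda)^{-\alpha-\beta}(\lambda+A)^{-1}\mid \lambda\in\Omega\}$. Since the M\"obius map $(1-\lambda)/(1+\lambda)$ sends $\overline{\C_{+}}$ into the closed unit disk and $|(-\lambda)/\lambda|^{\alpha}=1$, the scalar ratio between the two normalizations has modulus at most one on $\overline{\C_{+}}\setminus\{0\}$, so Kahane contraction upgrades this to $R$-boundedness of \eqref{eq:familyconvrbdd} on $\Omega$. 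When $\alpha=0$, the uniform bound on $(1+\lambda)^{-\beta}(\lambda+A)^{-1}$ near the imaginary axis forces $\|R(\lambda,-A)\|$ to stay bounded on a punctured neighborhood of $0$, and the standard blow-up of resolvents at the spectrum then places $0$ in $\rho(A)$. The main technical obstacle I anticipate is extending the $R$-boundedness to the excluded sector $S_{\ph}\supseteq(0,\infty)$, since the constants in Proposition~\ref{prop:app-equivalence growth bounds}(3) degrade as $\ph\to 0$; this requires a separate argument on $(0,\infty)$ based on the sectoriality of $A$ and the integral representation from the preceding paragraph.
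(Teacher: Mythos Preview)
Your approach is essentially the same as the paper's: Lemma~\ref{lem:resolventconv} for the spectral inclusion, the Laplace integral to get $R$-boundedness of $\{(\lambda+A)^{-1}\Phi^{\alpha}_{\beta}(A)\mid\lambda\in\overline{\C_{+}}\setminus\{0\}\}$ (the paper identifies the integral with the resolvent via Lemma~\ref{lem:standardfact} applied to the rescaled semigroup $(\ue^{-\lambda t}T(t))_{t\geq 0}$ rather than by analytic continuation, but either works), and then Proposition~\ref{prop:app-equivalence growth bounds}\eqref{item:app-equivalence growth bounds 3} to convert to the scalar weight on $\Omega$. Your normalisation swap via $|(1-\lambda)/(1+\lambda)|\leq 1$ and Kahane contraction is correct, and your argument for $0\in\rho(A)$ when $\alpha=0$ (resolvent blow-up near the spectrum) is actually shorter than the paper's explicit construction of $A^{-1}$ via $F(0)$.

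The gap you flag is real, and the paper closes it with a device you did not hit on. Since $A$ is sectorial, on the missing sector $S_{\ph}$ one has $\|\lambda(\lambda+A)^{-1}\|\leq M(A)$, so for $\alpha\geq 1$ the family $\lambda^{\alpha}(1+\lambda)^{-\alpha-\beta}(\lambda+A)^{-1}$ is \emph{uniformly} bounded on all of $\overline{\C_{+}}\setminus\{0\}$ (combining this with the $R$-bound already obtained on $\Omega$). The paper then invokes the Poisson representation for a bounded holomorphic $\La(X)$-valued function on the half-plane: each value in the open interior $\C_{+}$ is an $L^{1}$-average, against the Poisson kernel, of the boundary values on $\ui\R$. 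Since $\ui\R\setminus\{0\}\subseteq\Omega$ and $L^{1}$-averages of $R$-bounded sets are $R$-bounded by \eqref{eq:Kahane contraction}, the $R$-bound on the imaginary axis propagates to all of $\C_{+}$ with a constant independent of $\ph$. This sidesteps the degradation of the constants in Proposition~\ref{prop:app-equivalence growth bounds}\eqref{item:app-equivalence growth bounds 3} as $\ph\to 0$ entirely, rather than attempting a separate direct estimate on $(0,\infty)$.
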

\begin{proof}
Lemma \ref{lem:resolventconv} and the remark following it show that $\overline{\C_{-}}\setminus\{0\}\subseteq\rho(A)$. By assumption, $T(\cdot)A^{\alpha}(1+A)^{-\alpha-\beta}x\in L^{1}(\R_{+};X)$ for all $x\in X$, with
\[
\int_{0}^{\infty}\|T(t)A^{\alpha}(1+A)^{-\alpha-\beta}x\|_{X}\ud t\leq \|g\|_{L^{1}(\R_{+})}\|x\|_{X}.
\]
Moreover, for each $\lambda\in\overline{\C}_{+}$ one has $[t\mapsto e^{-\lambda t}]\in L^{\infty}(\R_{+})$. Set
\[
F(\lambda)x:=\int_{0}^{\infty}e^{-\lambda t}T(t)A^{\alpha}(1+A)^{-\alpha-\beta}x\,\ud t
\]
for $\lambda\in\overline{\C_{+}}$ and $x\in X$. By \cite[Corollary 2.17]{Kunstmann-Weis04}, $\{F(\lambda)\mid \lambda\in\overline{\C_{+}}\setminus\{0\}\}\subseteq\La(X)$ is $R$-bounded. Lemma \ref{lem:standardfact}, applied to the semigroup $(e^{-\lambda t}T(t))_{t\geq0}$ generated by $-(\lambda+A)$, shows that $F(\lambda)=(\lambda+A)^{-1}A^{\alpha}(1+A)^{-\alpha-\beta}$ for $\lambda\in\overline{\C_{+}}\setminus\{0\}$. Now Proposition \ref{prop:app-equivalence growth bounds} \eqref{item:app-equivalence growth bounds 3} implies that
\begin{equation}\label{eq:decay and resolvent bounds}
\{\lambda^{\alpha}(1+\lambda)^{-\alpha-\beta} (\lambda+A)^{-1}\mid \lambda\in\C\setminus\{0\}, |\!\arg(\lambda)|\in[\ph,\pi/2]\}\subseteq\La(X)
\end{equation}
is $R$-bounded for each $\ph\in(0,\pi/2)$. In particular, since $A$ is a sectorial operator, the collection in \eqref{eq:familyconvrbdd} is uniformly bounded. Now a standard argument, considering a convolution with the Poisson kernel (see e.g.~\cite[Proposition 8.5.8]{HyNeVeWe2}), shows that \eqref{eq:familyconvrbdd} is $R$-bounded.

For the second statement suppose that $\alpha=0$. To show that $0\in\rho(A)$ we may consider $\beta\in\N$, since $(1+A)^{-(\lceil\beta\rceil-\beta)}\in\La(X)$. Note that $F(0)\in\La(X,D(A))$, with
\[
AF(0)x=-\lim_{h\downarrow0}\frac{T(h)-I_{X}}{h}F(0)=\lim_{h\downarrow0}\frac{1}{h}\int_{0}^{h}T(t)(1+A)^{-\beta}x\,\ud t=(1+A)^{-\beta}x
\]
for all $x\in X$. Similarly, $F(0)Ay=(1+A)^{-\beta}y$ for $y\in D(A)$. By iteration one obtains that $F(0)\in\La(X,X_{\beta})$. This shows that the part of $A$ in $X_{\beta}$ is invertible, with inverse $F(0)(1+A)^{\beta}|_{X_{\beta}}$. Using the similarity transform $(1+A)^{-\beta}:X\to X_\beta$ one obtains $0\in\rho(A)$, which concludes the proof.
\end{proof}

\subsubsection{Operators which are not sectorial}\label{subsec:non sectorial operators}

In several of the results up to this point we have considered operators $A$ with resolvent growth $(\alpha,\beta)$, for $\alpha,\beta\in[0,\infty)$, which are in addition assumed to be sectorial. Here we discuss which results are still valid when one drops the sectoriality assumption. A complicating factor is then that $A^{\alpha}$ is not well defined through the sectorial functional calculus, and we only consider $\alpha\in \N_{0}$.

Let $A$ be an injective operator, not necessarily sectorial, with resolvent growth $(\alpha,\beta)$ on a Banach space $X$. First note that $\veps+A$ is a sectorial operator for each $\veps>0$, since $-A$ generates a $C_{0}$-semigroup and $\overline{\C}_{-}\subseteq\rho(\veps+A)$. Hence the fractional domains
\[
X_{\beta}=D((1+A)^{-\beta})=D((1-\veps+\veps+A)^{-\beta})
\]
 are well defined via the sectorial functional calculus for $\veps+A$, $\veps\in(0,1)$, and up to norm equivalence they do not depend on the choice of $\veps$.

If $\alpha_{1}=\alpha_{2}\in\N_{0}$ in Lemma \ref{lem:interpolation}, then \eqref{eq:decay on intermediate domains} still holds and \eqref{eq:decay on larger domains} is replaced by
\begin{equation}\label{eq:decay larger domains nonsectorial}
\|T(t)\|_{\La(X^{\lceil \nu\alpha_{1}\!\rceil}_{\nu\beta_{1}},X)}\leq C_{\nu} t^{-\mu\nu}\qquad(t\in[1,\infty)).
\end{equation}
The proofs are identical except that one obtains \eqref{eq:decay larger domains nonsectorial} for $\nu\notin\N$ by applying \eqref{eq:decay on intermediate domains} to the pairs $(\lceil\nu\rceil\alpha_1,
\lfloor\nu\rfloor\beta_1)$ and $(\lceil\nu\rceil\alpha_1,\lceil\nu\rceil\beta_1)$. One can also show that for each $\tau\in[0,\infty)$ there exists a $\sigma\in\N$ such that \eqref{eq:growth of semigroup on X main} holds for all $\rho\in[0,\min(\frac{\sigma}{\alpha},\frac{\tau}{\beta}))$.

Suppose that $X$ has Fourier type $p\in[1,2]$ and let $r\in[1,\infty]$ be such that $\frac{1}{r}=\frac{1}{p}-\frac{1}{p'}$.
Then for all $s\geq0$ and $\veps>0$ there exists a $C_{s,\veps}\geq0$ such that
\begin{equation}\label{eq:polynomial decay Fourier type nonsectorial}
\|T(t)\|_{\La(X^{\mu}_{\nu},X)}\leq C_{s,\veps}t^{-s}  \qquad (t\in [1, \infty)),
\end{equation}
where $\mu= \lfloor (\lceil s\rceil +1)\alpha\rfloor\in\N_{0}$, $\nu = (s+1)\beta+\frac{1}{r}+\veps$ for $p\in[1,2)$, and $\nu=(s+1)\beta$ for $p=2$. Versions of \eqref{eq:polynomial decay Fourier type nonsectorial} in the settings of Theorems \ref{thm:polynomial decay type}, \ref{thm:polynomial decay convex} and \ref{thm:asymptotically analytic semigroups} also hold. In particular, if $(T(t))_{t\geq 0}$ is asymptotically analytic then
\eqref{eq:polynomial decay Fourier type nonsectorial} holds with $\mu:= \lfloor(s+1) \alpha\rfloor$ and $\nu=0$ for each $s\in\N_{0}$.

\subsection{Comparison and examples}\label{subsec:comparison, problems and examples}

In this subsection we compare the decay rates which we have obtained to what can be found in the literature, and we present examples to illustrate our results.

\subsubsection{Comparison}

Let $\alpha,\beta\geq0$ and let $A$ be an injective sectorial operator with resolvent growth $(\alpha,\beta)$ on a Banach space $X$. The decay rates which we have obtained so far are in general not optimal when $(T(t))_{t\geq 0}\subseteq\La(X)$ is uniformly bounded. Indeed, for $\sigma,\tau\geq0$ and $N:=\sup\{\|T(t)\|_{\La(X)}\mid t\in[0,\infty)\}<\infty$ it follows from \cite{Batty-Duyckaerts08,Chill-Seifert16} that there exists a $C_{\rho}\geq0$ such that
\begin{equation}\label{eq:comparison bounded}
\|T(t)\|_{\La(X^{\sigma}_{\tau},X)}\leq C_{\rho}N (1+\log(t))^{\rho}t^{-\rho}\qquad (t\in[1,\infty)),
\end{equation}
where $\rho=\frac{\sigma}{\alpha}$ if $\beta=0$, $\rho=\frac{\tau}{\beta}$ if $\alpha=0$, and $\rho=\min(\sigma,\tau)\cdot\min(\frac{1}{\alpha},\frac{1}{\beta})$ if $\alpha\beta>0$. It was shown in \cite{Borichev-Tomilov10} that \eqref{eq:comparison bounded} is optimal on general Banach spaces if $\alpha=0$, but on Hilbert spaces \eqref{eq:comparison bounded} can be improved to
\begin{equation}\label{eq:comparison bounded Hilbert}
\|T(t)\|_{\La(X^{\sigma}_{\tau},X)}\leq C_{\rho}N^{2}t^{-\rho}\qquad (t\in[1,\infty)),
\end{equation}
cf.~\cite{Borichev-Tomilov10,BaChTo16}. Moreover, \eqref{eq:comparison bounded Hilbert} is optimal, in the sense that for $\sigma,\tau\in\{0,1\}$ \eqref{eq:comparison bounded Hilbert} implies that $A$ has resolvent growth $(\alpha,\beta)$ (see \cite{Batty-Duyckaerts08, BaChTo16}).

For unbounded semigroups \eqref{eq:comparison bounded} and \eqref{eq:comparison bounded Hilbert} do not hold in general.
Indeed, \cite[Example 4.2.9]{vanNeerven96b} gives an example of an operator $A$ with $R$-resolvent growth $(0,0)$ on $X := L^p(1,\infty)\cap L^{p'}(1,\infty)$, $p\in[1,2)$, such that $\|T(\cdot)\|_{\La(X)}$ grows exponentially. Moreover, Example \ref{ex:optimalitybeta} shows that on Hilbert spaces \eqref{eq:comparison bounded Hilbert} can fail for $\alpha=0$ and $\beta>0$, and Corollary \ref{cor:polynomial decay Hilbert space} is optimal for this example when $\tau=\beta$.

Note that \eqref{eq:comparison bounded} need not be optimal for uniformly bounded semigroups when $\alpha\beta>0$, and that Corollary \ref{cor:growth of semigroup on X} yields a sharper decay rate if e.g.~$\alpha=\sigma=1/\veps$ and $\beta=\tau=\veps$ for $\veps\in(0,1)$. On Hilbert spaces one can use \cite[Theorem 4.7]{BaChTo16}, Proposition \ref{prop:app-equivalence growth bounds} and Lemma \ref{lem:interpolation} to let $\rho=\min(\frac{\sigma}{\alpha},\frac{\tau}{\beta})$ in \eqref{eq:comparison bounded Hilbert}, but a similar improvement of \eqref{eq:comparison bounded} on Banach spaces using the methods of \cite{Batty-Duyckaerts08, Chill-Seifert16} is not immediate.

The characterization of polynomial stability in Theorem \ref{thm:abstract polynomial stability} is new even for uniformly bounded semigroups.

A scaling argument can be used to apply \eqref{eq:comparison bounded} to polynomially growing semigroups, leading to sharper decay rates than those in Corollary \ref{cor:growth of semigroup on X}. Suppose $\alpha=0$, $\beta>0$ and that $\|T(t)\|_{\La(X)}\lesssim t^{\mu}$ for all $t\geq1$ and some $\mu\geq0$. For $a>0$ one has
\[
\sup\{\|\ue^{-at}T(t)\|_{\La(X)}\mid t\in[0,\infty)\} \lesssim a^{-\mu}.
\]
Now \eqref{eq:comparison bounded} yields
\[
\|\ue^{-at} T(t)\|_{\La(X_{\tau},X)}\lesssim a^{-\mu} (1+\log(t))^{\tau/\beta}t^{-\tau/\beta}\qquad(t\in[1,\infty)).
\]
For $t\geq1$ set $a := 1/t$. Then
\begin{equation}\label{eq:better rates using literature}
\|T(t)\|_{\calL(X_{\sigma},X)}\lesssim (1+\log(t))^{\tau/\beta}t^{\mu-\tau/\beta},
\end{equation}
which improves the rates from Corollary \ref{cor:growth of semigroup on X}. However, other results in this section yield faster decay rates than \eqref{eq:better rates using literature} for large $\mu$, such as Corollary \ref{cor:polynomial decay Hilbert space} for $\mu\geq 1$.

In this article we make polynomial growth assumptions on the resolvent, whereas in \cite{Batty-Duyckaerts08,BaChTo16,Chill-Seifert16, RoSeSt17} more general resolvent growth is allowed. The scaling argument from above can be used in certain cases to obtain  decay estimates corresponding to more general resolvent growth, but this depends on the growth behavior of the semigroup on $X$. We do not know whether the techniques from this article can be used to obtain nontrivial decay estimates for unbounded semigroups under, for example, exponential or logarithmic growth conditions on the resolvent.

\subsubsection{An exponentially unstable semigroup with polynomial resolvent}

We now apply our theorems to an operator from \cite[Example 4.1]{Wrobel89}, which in turn is a variation of a classical example in stability theory from \cite{Zabczyk75} (see also \cite[Example 1.2.4]{vanNeerven96b}). This example shows that Corollary \ref{cor:polynomial decay Hilbert space} is optimal in the case where $\alpha=0$ and $\tau=\beta$. 

\begin{example}\label{ex:optimalitybeta}
We show that for all $\beta\in(0,\infty)$ and $\veps\in(0,1)$ there exists an operator $A$ with resolvent growth $(0,\beta)$ on a Hilbert space $X$ such that $\|T(\cdot)\|_{\La(X_{\tau},X)}$ is unbounded for $\tau\in[0,(1-\veps)\beta)$. In fact, $\|T(t)\|_{\La(X_{\tau},X)}$ grows exponentially in $t$ for $\tau\in[0,(1-\veps)\beta)$. By Corollary \ref{cor:polynomial decay Hilbert space} $\|T(\cdot)\|_{\La(X_{\beta},X)}$ is uniformly bounded, and therefore the exponent $\tau$ in Corollary \ref{cor:polynomial decay Hilbert space} is optimal.

It suffices to show that for all $\gamma,\delta\in(0,1)$ there exists an operator $A$ with resolvent growth $\big(0,\frac{\log(1/\gamma)}{\log(1/\delta)}\big)$ on a Hilbert space $X$ such that $\|T(\cdot)\|_{\La(X_{\tau},X)}$ is unbounded for all $\tau\in[0,\frac{1-\gamma}{\log(1/\delta)})$, as follows from the fact that $1-\gamma$ is the first order Taylor approximation of $\log(1/\gamma)$ near $\gamma=1$. Set $\beta_{0}:=\frac{\log(1/\gamma)}{\log(1/\delta)}$, and for $n\in\N$ let the $n\times n$ matrix $B_{n}$ be given by
\[
B_{n}(k,l):= \left\{\!
    \begin{array}{ll}
      \!1 & \text{for }l=k+1, \\
      \!0 & \text{otherwise.}
    \end{array}
  \right.
\]
Let $m(n):= \big\lfloor \frac{\log(n)}{\log(1/\delta)}\big\rfloor\in\N_{0}$ and let $n_{0}\in\N$ be such that $m(n_{0})\geq 2$. Next, let $\displaystyle X = \bigoplus_{n\geq n_{0}} \ell^2_{m(n)}$ be the $\ell^2$ direct sum of the $m(n)$-dimensional $\ell^2_{m(n)}$ spaces for $n\geq n_{0}$, and consider the operator $A := (-\ui n+\gamma-B_{m(n)})_{n\geq n_{0}}$ on $X$. As shown in \cite[Example 4.1]{Wrobel89}, $-A$ generates a $C_0$-semigroup $(T(t))_{t\geq0}\subseteq\La(X)$ such that $\omega_0(T) = 1-\gamma$. We claim that $\overline{\C_{-}}\subseteq \rho(A)$ and that there exists a $C\geq0$ such that
\begin{equation}\label{eq:wrobel}
\|(\eta+\ui\xi+A)^{-1}\|_{\La(X)}\leq C (|\xi|^{\beta_{0}}+1) \qquad(\eta\in[0,\infty),\ \xi\in \R),
\end{equation}
which implies that $A$ has resolvent growth $(0,\beta_{0})$.

To prove the claim let $z:=\eta+i\xi$ and note that $B_{m(n)}^{m(n)} = 0\in \La(\ell^{2}_{m(n)})$ and $\|B_{m(n)}\|_{\La(\ell^{2}_{m(n)})}=1$ for all $n\geq n_{0}$. Hence
\[
\|(z-\ui n+\gamma-B_{m(n)})^{-1}\|_{\La(\ell^{2}_{m(n)})} \leq \sum_{k=0}^{m(n)-1} \frac{\|B_{m(n)}^k\|_{\La(\ell^{2}_{m(n)})}}{|z-\ui n+\gamma|^{k+1}}\leq \sum_{k=0}^{m(n)-1} \frac{1}{|z-\ui n+\gamma|^{k+1}}.
\]
Fix $\xi\in \R$, and let $n_1\in\N$ be such that $n_{1}\geq n_{0}$ and $|n_1-\xi| = \min\{\abs{n-\xi}\mid n\in\N, n\geq n_{0}\}$. Note that $\abs{z-\ui n+\gamma}\geq \gamma$ for all $n\in\N$. Hence for $\xi\geq 0$ and $n\in \{n_{0}, \ldots, n_1+1\}$ one has
\begin{align*}
&\|(z-\ui n+\gamma-B_{m(n)})^{-1}\|_{\La(\ell^{2}_{m(n)})}  \leq \sum_{k=0}^{m(n)-1} \frac{1}{\gamma^{k+1}} = \frac{\gamma^{-m(n)}-1}{1-\gamma}\\
&\leq (1-\gamma)^{-1} \gamma^{-m(n_1+1)}\leq (1-\gamma)^{-1} (n_1+1)^{\beta_{0}}\lesssim \xi^{\beta_{0}}+1,
\end{align*}
where we used that $n_1\leq \xi+2$. If $\xi<0$ or $n\geq n_1+2$ then $|z-\ui n+\gamma|\geq c_{\gamma}:=\sqrt{1+\gamma^2}>1$. Therefore
\[
\|(z-\ui n+\gamma-B_{m(n)})^{-1}\|_{\La(\ell^{2}_{m(n)})} \leq \sum_{k=0}^{\infty} \frac{1}{c_{\gamma}^{k+1}}<\infty,
\]
and now \eqref{eq:wrobel} follows. In fact, \eqref{eq:wrobel} is optimal for $\eta=0$ (see \cite[Example 4.1]{Wrobel89}).

We now show that $\|T(\cdot)\|_{\calL(X_{\tau},X)}$ is unbounded for $\tau\in[0,\frac{1-\gamma}{\log(1/\delta)})$. First note that $\|T(t)\|_{\calL(X_{\tau},X)} \geq \frac{\|T(t) x\|_{X}}{\|(1+A)^{\tau} x\|_{X}}$ for each $x\in X_{\tau}$ with $1=\|x\|_{X_{\tau}}\gtrsim \|(1+A)^{\tau}x\|_{X}$. Let $n\geq n_{0}$ and let $x = (x^{(k)})_{k\geq n_{0}}\in X$ be such that $x^{(k)} = 0$ for all $k\neq m(n)$ and $x^{(m(n))} = (0, \ldots, 0, 1)$. Then, for $\tau\in \N_0$, Newton's binomial formula yields
\begin{equation}\label{eq:binomial estimate}
\|(1+A)^{\tau} x\|_{X} = \|(-\ui n+1+\gamma-B_{m(n)})^{\tau} x^{(m(n))}\|_{\ell^{2}_{m(n)}} \lesssim n^{\tau}.
\end{equation}
The moment inequality \cite[Proposition 6.6.4]{Haase06a} extends \eqref{eq:binomial estimate} to all $\tau\in[0,\infty)$. Now set $t := m(n)-1\in[1,\infty)$. Lemma \ref{lem:exp} yields
\begin{align*}
\|T(t) x\|_{X} &= \ue^{-\gamma t} \|e^{t B_{m(n)}}x^{(n)}\|_{\ell^{2}_{m(n)}} = e^{-\gamma t}  \Big(\sum_{k=0}^{m(n)-1} \Big(\frac{t^k}{k!}\Big)^2\Big)^{1/2}\\
& \gtrsim \frac{e^{(1-\gamma)m(n)}}{(m(n))^{1/4}} \gtrsim \frac{n^{\frac{1-\gamma}{\log(1/\delta)}}}{\log(n)^{1/4}}.
\end{align*}
Combining this with \eqref{eq:binomial estimate} shows that, with
$v:=\frac{1-\gamma}{\log(1/\delta)}-\tau$,
\[\|T(t)\|_{\calL(X_{\tau},X)}\gtrsim \frac{n^{v}}{\log(n)^{1/4}}\eqsim \frac{e^{tv}}{t^{1/4}}\]
for an implicit constant independent of $n\geq n_{0}$ and $t\geq1$. The latter is bounded as $n\to \infty$ if and only if $\tau\geq \frac{1-\gamma}{\log(1/\delta)}$ holds, and otherwise it grows exponentially.
\end{example}

\subsubsection{Operator matrices}

We now give an example of an operator $\mathcal{A}$ with resolvent growth $(n,0)$, for $n\in\N\setminus\{1\}$, such that $\|T(\cdot)\|_{\La(X^{m},X)}$ is unbounded for all $m\in\{0,\ldots, n-2\}$. Moreover, $\|T(t)\|_{\La(X^{n-1},X)}$ does not tend to zero as $t\to\infty$. Hence the example would show that the exponent $\frac{\sigma+1}{\alpha}-1$ in Theorem \ref{thm:asymptotically analytic semigroups} is sharp, if $A$ were a sectorial operator. However, it turns out that this is not the case. As noted in Section \ref{subsec:non sectorial operators}, our theory also applies to operators which are not sectorial.

\begin{example}\label{ex:nonsect}
Fix $n\in\N\setminus\{1\}$. We give an example of an injective bounded operator $\mathcal{A}$ with dense range on a Hilbert space $X$ such that $\sigma(\mathcal{A}) = [0,1]$,
\begin{align}\label{eq:toproveoptimal1}
\sup\Big\{\frac{\abs{\lambda}^{n}}{(1+\abs{\lambda})^{n}}\|(\lambda+A)^{-1}\|_{\La(X)}\Big| \lambda\in\overline{\C_{+}}\setminus\{0\}\Big\}<\infty,
\end{align}
and
\begin{align}\label{eq:toproveoptimal2}
\|T(t) \mathcal{A}^{m}\|_{\La(X)}\eqsim t^{n-1-m}\qquad (t\in[1,\infty))
\end{align}
for all $m\in\{0,\ldots, n-1\}$, where $(T(t))_{t\geq0}$ is the $C_{0}$-semigroup generated by $-\mathcal{A}$. Moreover, $\mathcal{A}$ is not sectorial.

We construct $\mathcal{A}$ using operator matrices. Let $A\in \calL(L^2(0,1))$ be the multiplication operator given by $A f(x) := xf(x)$ for $f\in L^{2}(0,1)$ and $x\in(0,1)$.
Set $X:=(L^{2}(0,1))^{n}$ and let $N\in\La(X)$ be the nilpotent operator matrix with $N_{k,k+1} = I_{L^{2}(0,1)}$ for $k\in \{1, \ldots, n-1)$, and $N_{k,l}=0\in \La(L^{2}(0,1))$ for $k,l\in\{1,\ldots, n\}$ with $l\neq k+1$. Set $\mathcal{A}:=A I_{X}-N$. Then $\mathcal{A}$ is bounded and has dense range. Let $(T(t))_{t\geq0}\subseteq\La(X)$ and $(S(t))_{t\geq 0}\subseteq\La(X)$ be the $C_{0}$-semigroups generated by $-\mathcal{A}$ and $-AI_{X}$. Then $T(t) = S(t) \ue^{tN}$ for all $t\in[0,\infty)$, where we use that $AI_X$ and $N$ commute. Since $N^k\neq 0$ if and only if $k\leq n-1$, one has $\|T(t)\|_{\La(X)}\eqsim t^{n-1}$ for $t\geq0$. Also, $\sigma(\mathcal{A}) = [0,1]$ and, using the Neumann series for the resolvent,
\[
R(\lambda, \mathcal{A}) = R(\lambda, A) (I_X + R(\lambda,A) N)^{-1} = \sum_{k=0}^{n-1} R(\lambda,A)^{k+1}(-N)^k
\]
for $\lambda\in\C\setminus[0,1]$. This implies \eqref{eq:toproveoptimal1}.

Fix $m\in\{0,\ldots, n-1\}$. Then $\mathcal{A}^{m} = \sum_{k=0}^{m} \binom{m}{k} (-1)^{k} A^{m-k} N^k$ and
\begin{align}\label{eq:toproveoptimalhelp}
T(t)\mathcal{A}^{m} = \sum_{k=0}^{m} \binom{m}{k} (-1)^{k} S(t) A^{m-k} e^{tN} N^k\qquad(t\in[0,\infty)).
\end{align}
Let $k\in\{0,\ldots, m\}$ and $t\geq m$. Then
\[
\displaystyle \|S(t)\restriction_{L^{2}(0,1)} (-A)^{m-k}\|_{\La(L^{2}(0,1))} = \sup_{s\in (0,1)} e^{-ts} s^{m-k} \eqsim t^{k - m}.
\]
The dominating matrix element of $e^{tN} N^k$ is $\frac{t^{n-k-1}}{(n-k-1)!}I_{L^2(0,1)}$. Hence
\[
\|S(t) A^{n-1-k} e^{tN} N^k\|_{\La(X)} \eqsim t^{k-m} t^{n-k-1}
\]
for an implicit constant independent of $t$. Now \eqref{eq:toproveoptimal2} follows from \eqref{eq:toproveoptimalhelp}.
\end{example}

\subsubsection{Multiplication operators on Sobolev spaces}

We now consider another typical setting where one encounters generators of unbounded semigroups with polynomial growth of the resolvent. It is included to show that even straightforward multiplication operators can generate unbounded $C_0$-semigroups when the underlying space is a Sobolev space. The example also shows that Proposition \ref{prop:decay implies resolvent bounds2} is sharp.

\begin{example}\label{ex:mult}
Fix $a\in(0,\infty)$ and $b\in(0,1)$ with $a+b\geq 1$. Set $\ph(s) := s^{-a} + \ui s^b$ for $s\in(1,\infty)$. Let $X := W^{1,2}(1, \infty)$ and let $A$ be the multiplication operator on $X$ associated with $\ph$. Then $\sigma(A)\subseteq \C_{+}$ and $-A$ generates the $C_{0}$-semigroup $(T(t))_{t\geq 0}\subseteq\La(X)$ given by $T(t)f(s)=\ue^{-t\ph(s)}f(s)$ for $t\in[0,\infty)$, $f\in X$ and $s\in(1,\infty)$. We prove that $A$ has resolvent growth $(0,\frac{b-1+2a}{b})$, by showing that $\|(\eta-\ui \xi + A)^{-1}\|_{\La(X)}\lesssim g(\xi):=\abs{\xi}^{\frac{b-1+2a}{b}}$ for each $\eta\in[0,\infty)$ and $\xi\in\R$.

First note that the operator $(\eta- \ui\xi+ A)^{-1}$ is the multiplication operator on $X$ associated with $s\mapsto -(\eta+s^{-a} + \ui(s^b -\xi))^{-1}$. Furthermore,
\[
\sup\{\|(\eta-\ui \xi + A)^{-1}\|_{\La(X)}\mid \eta\in[0,\infty), \xi\in[-(a/b)^{b/(a+b)},(a/b)^{b/(a+b)}]\}<\infty,
\]
where we use that $-A$ is a semigroup generator and that $\sigma(A)\subseteq\C_{+}$. For $\xi\in\R$ with $\abs{\xi}>(a/b)^{b/(a+b)}$ we bound $\|(\eta- \ui\xi+ A)^{-1}\|_{\La(X)},$ using the supremum norm of $s\mapsto -(\eta+s^{-a} + \ui(s^b -\xi))^{-1}$ and its derivative, by
\begin{equation}\label{eq:help equation example}
\begin{aligned}
&\sup_{s\in(1,\infty)} \frac{1}{\abs{\eta+s^{-a} + \ui(s^b -\xi)}}  +\! \sup_{s\in(1,\infty)}\! \frac{|-a s^{-a-1} + \ui b s^{b-1}|}{\abs{\eta+s^{-a}+ \ui(s^b -\xi)}^{2}}\\
& \leq \sqrt{2}\!\sup_{s\in(1,\infty)} \frac{1}{s^{-a} + |s^b -\xi|}  + \sup_{s\in(1,\infty)} \frac{as^{-a-1} + b s^{b-1}}{s^{-2a} + (s^b -\xi)^2}.
\end{aligned}
\end{equation}
For the first term in \eqref{eq:help equation example} note that
\[
\sup\Big\{\frac{1}{s^{-a} + |s^b -\xi|}\Big| x\in[1,\abs{\xi}^{1/b}]\Big\}\leq \xi^{a/b}\leq g(\xi)
\]
and that $s\mapsto (s^{-a}+\abs{s^{b}-\xi})^{-1}$ is decreasing for $s>\abs{\xi}^{1/b}> (a/b)^{1/(a+b)}$. For the second term and for $\abs{\xi}>(a/b)^{b/(a+b)}>1$ and $s\in(1, (\abs{\xi}+\tfrac12)^{1/b})$, write
\[
\frac{as^{-a-1} + b s^{b-1}}{s^{-2a} + (s^b -\xi)^2}\leq a s^{a-1} + \frac{bs^{b-1}}{s^{-2a}}\lesssim g(\xi).
\]
We conclude that $A$ indeed has resolvent growth $(0,\frac{b-1+2a}{b})$.

Let $t\in[1,\infty)$ and write
\begin{align*}
&\|T(t)\|_{\calL(X)}  \eqsim \!\sup_{k\in \{0,1\}}\sup_{s\in(1,\infty)} \Big|\frac{\ud^k}{\ud s^k}[e^{-t \phi(s)}]\Big| \\
&\eqsim\! \sup_{s\in(1,\infty)} |\ui bs^{b-1} t e^{-t s^{-a}} - a s^{-a-1} t e^{-t s^{-a}}| \eqsim\! \sup_{s\in(1,\infty)} s^{b-1} t e^{-t s^{-a}} \eqsim t^{1-\frac{1-b}{a}}
\end{align*}
for implicit constants independent of $t$. It follows from Corollary \ref{cor:growth of semigroup on X} that
\[
\|T(t)\|_{\La(X_{\tau},X)} \lesssim t^{1-\frac{1-b}{a}-\rho} \qquad (t\in[1,\infty))
\]
for each $\tau\in[0,\infty)$ and $\rho\in[0,\tau b/(b-1+2a))$.
On the other hand, explicit computations yield
\begin{equation}\label{eq:help equation example 2}
\begin{aligned}
\|T(t)\|_{\calL(X_{\tau}, X)} & \eqsim \sup_{k\in \{0,1\}}\sup_{s\in(1,\infty)} \Big|\frac{\ud^k}{\ud s^k}[e^{-t \phi(s)} \phi(s)^{-\tau}]\Big|\\
&\eqsim \sup_{s\in(1,\infty)} s^{b-1-b\tau} t e^{-t s^{-a}} \eqsim t^{1-\frac{1-b+b\tau}{a}}.
\end{aligned}
\end{equation}
Thus $\|T(\cdot)\|_{\La(X_{\tau},X)}$ decays faster than Corollary \ref{cor:growth of semigroup on X} would imply. We also obtain from \eqref{eq:help equation example 2} that $\|T(t)\|_{\calL(X_{\tau}, X)}\in L^1[0,\infty)$ if and only if $\tau>\frac{b-1+2a}{b}$. Therefore Proposition \ref{prop:decay implies resolvent bounds2} yields that $A$ has resolvent growth $(0, \beta)$ for each $\beta>\frac{b-1+2a}{b}$. Since the notions of uniform boundedness and $R$-boundedness coincide on the Hilbert space $X$, this shows that the parameters in Proposition \ref{prop:decay implies resolvent bounds2} cannot be improved.
\end{example}

\section{Exponential stability}\label{sec:exponential stability}

In this section we use the theory from the previous sections to derive in a unified manner various corollaries on exponential stability.

Let $-A$ be the generator of a $C_{0}$-semigroup $(T(t))_{t\geq 0}$ on a Banach space $X$. Set $s(-A):=\sup\{\Real(\lambda)\mid\lambda\in\sigma(-A)\}$, and for $\beta\in[0,\infty)$ let
\begin{align*}
s_{\beta}(-A)&:=\inf\{\w>s(-A)\mid \sup\{(1+\abs{\lambda})^{-\beta}\|(\lambda+A)^{-1}\|_{\La(X)}\mid \Real(\lambda)\geq\w\}<\infty\},\\
s_{R}(-A)&:=\inf\{\w>s(-A)\mid R_{X}(\{(\lambda+A)^{-1}\mid\Real(\lambda)\geq\w\})<\infty\}.
\end{align*}
Then Proposition \ref{prop:decay implies resolvent bounds2} yields
\[
s_{0}(-A)\leq s_{R}(-A)\leq \w_{0}(T).
\]
In particular, for each $\eta\in (\w_{0}(T),\infty)$ the operator $A+\eta$ is sectorial. Hence for $\beta\in [0,\infty)$ the fractional domain $X_{\beta}=D((\eta+A)^{\beta})$ is defined as in Section \ref{subsec:sectorial operators}, and up to norm equivalence $X_{\beta}$ does not depend on the choice of $\eta\in(\w_{0}(T),\infty)$. Throughout this section we fix a choice of $\eta\in(\w_{0}(T),\infty)$ and the associated spaces $X_{\beta}$ for $\beta\in[0,\infty)$. For $x\in X$ let
\begin{align*}
\w(x):=\inf\{\w\in\R\mid \lim_{t\to\infty}\|e^{-\omega t}T(t)x\|_{X}=0\},
\end{align*}
and for a Banach space $Y$ continuously embedded in $X$ set
\begin{align*}
\w_{Y}(T):=\sup\{\w(x)\mid x\in Y\}.
\end{align*}
For $\beta\in(0,\infty)$ we write $\omega_\beta(T):=\omega_{X_{\beta}}(T)$. The uniform boundedness principle implies that for all $\omega>\omega_{Y}(T)$ there exists an $M\in(0,\infty)$ such that
\begin{equation}\label{eq:growthboundY}
\|T(t)\|_{\calL(Y, X)}\leq M\ue^{\omega t}\qquad (t\in[0,\infty)).
\end{equation}

We need two preparatory lemmas. The first is \cite[Lemma 3.5]{Weis97}, and it follows directly from Lemma \ref{lem:interpolation} and from basic properties of convex functions.

\begin{lemma}\label{lem:continuity growth bound}
Let $-A$ be the generator of a $C_{0}$-semigroup $(T(t))_{t\geq0}$ on a Banach space $X$. Then the function $(0,\infty)\to[-\infty,\infty)$, $\beta \mapsto \w_{\beta}(T)$, is continuous on open subintervals of $\{\beta\in\R\mid \w_{\beta}(T)\in(-\infty,\infty)\}$.
\end{lemma}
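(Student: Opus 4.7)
The plan is to show that $\beta \mapsto \omega_\beta(T)$ is \emph{convex} on its natural domain, and then appeal to the standard fact that a real-valued convex function is continuous on any open interval on which it is finite.

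For the convexity step I would fix $\beta_1, \beta_2 \in (0,\infty)$ with $\omega_{\beta_j}(T) \in \R$, and take any $\omega_j > \omega_{\beta_j}(T)$ for $j\in\{1,2\}$. By \eqref{eq:growthboundY} applied with $Y = X_{\beta_j}$, there exist constants $M_j \in (0,\infty)$ with
\[
\|T(t)\|_{\La(X_{\beta_j},X)} \leq M_j \ue^{\omega_j t} \qquad (t\in[0,\infty)).
\]
Now I would apply Lemma \ref{lem:interpolation} with $\alpha_1 = \alpha_2 = 0$, $f_j(t) := M_j \ue^{\omega_j t}$, and the given $\theta \in [0,1]$. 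This yields a constant $C_\theta \in [0,\infty)$ such that, for all $t\in[0,\infty)$,
\[
\|T(t)\|_{\La(X_{\theta\beta_1+(1-\theta)\beta_2},X)} \leq C_\theta M_1^\theta M_2^{1-\theta}\, \ue^{(\theta \omega_1 + (1-\theta)\omega_2)t}.
\]
By the definition of $\omega_Y(T)$ this forces $\omega_{\theta\beta_1 + (1-\theta)\beta_2}(T) \leq \theta \omega_1 + (1-\theta)\omega_2$; letting $\omega_j \downarrow \omega_{\beta_j}(T)$ then gives
\[
\omega_{\theta\beta_1 + (1-\theta)\beta_2}(T) \leq \theta\, \omega_{\beta_1}(T) + (1-\theta)\, \omega_{\beta_2}(T),
\]
which is exactly convexity of $\beta\mapsto \omega_\beta(T)$ on the set $\{\beta \in (0,\infty) \mid \omega_\beta(T) \in \R\}$ (this set is automatically an interval, as convexity together with $\omega_{\beta}(T) \leq \omega_0(T)$ prevents $+\infty$ values in between finite ones, and Lemma \ref{lem:interpolation} shows monotone behavior in $\beta$).

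Having established convexity, the conclusion follows from the classical fact that every convex function $\R \supseteq I \to \R$ is continuous on the interior of $I$: if $J$ is an open subinterval of $\{\beta \in (0,\infty) \mid \omega_\beta(T) \in (-\infty,\infty)\}$, then $\omega_{(\cdot)}(T)$ is a real-valued convex function on $J$, hence continuous on $J$. The main (minor) obstacle is simply making sure the hypotheses of Lemma \ref{lem:interpolation} are genuinely applicable with $\alpha_1=\alpha_2=0$ and that the interpolation bound translates into the correct inequality between exponential growth bounds; everything after that is a one-line application of a standard real-analytic fact about convex functions.
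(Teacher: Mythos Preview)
Your proposal is correct and is precisely the argument the paper has in mind: the paper's proof consists of the single sentence that the result ``follows directly from Lemma \ref{lem:interpolation} and from basic properties of convex functions,'' and you have unpacked exactly this by using Lemma \ref{lem:interpolation} (with $\alpha_1=\alpha_2=0$, applied to the sectorial shift $A+\eta$) to obtain the convexity inequality and then invoking continuity of real-valued convex functions. The only cosmetic point is that the sectoriality hypothesis of Lemma \ref{lem:interpolation} is met by $A+\eta$ rather than $A$ itself, but since $T(t)$ commutes with $(\eta+A)^{-1}$ and the exponential factor $\ue^{-\eta t}$ cancels out of the growth-bound comparison, this changes nothing.
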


The following lemma is \cite[Theorem 3.1]{Weis97} (see also \cite[Theorem 3.2]{Weis-Wrobel96}). We show that it follows directly from Lemma \ref{lem:continuity growth bound} and Proposition \ref{prop:general semigroups}.

\begin{lemma}\label{lem:general Banach space stability}
Let $-A$ be the generator of a $C_{0}$-semigroup $(T(t))_{t\geq0}$ on a Banach space $X$. Then
$\w_{\beta+1}(T)\leq s_{\beta}(-A)$ for all $\beta\in[0,\infty)$.
\end{lemma}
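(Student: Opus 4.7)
The strategy is to rescale the semigroup so that the relevant half-plane sits to the right of the imaginary axis, apply Proposition \ref{prop:general semigroups} to the shifted generator, and pass to the boundary case via Lemma \ref{lem:continuity growth bound}.

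Fix $\omega>s_\beta(-A)$ and let $A_\omega:=A+\omega$, which generates the rescaled semigroup $S(t):=\ue^{-\omega t}T(t)$. Because $\omega>s(-A)$, the operator $A_\omega$ is injective and $\overline{\C_-}\setminus\{0\}\subseteq \rho(A_\omega)$. The substitution $\mu\mapsto \mu+\omega$, together with the trivial estimate $1+\abs{\mu+\omega}\lesssim 1+\abs{\mu}$ and the definition of $s_\beta(-A)$, yields
\[
\sup_{\mu\in \overline{\C_+}\setminus\{0\}}(1+\abs{\mu})^{-\beta}\|(\mu+A_\omega)^{-1}\|_{\La(X)}<\infty,
\]
so $A_\omega$ has resolvent growth $(0,\beta)$ in the sense of Definition \ref{def:operators with resolvent growth}. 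Moreover, the fractional domain spaces $X_\tau$ attached to $A$ and to $A_\omega$ coincide with equivalent norms, since they may both be computed via $\eta'+A$ for any common $\eta'>\omega_0(T)$.

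Apply Proposition \ref{prop:general semigroups} --- more precisely, its non-sectorial variant from Section \ref{subsec:non sectorial operators}, valid because $\alpha=0\in\N_0$ --- with $\sigma=0$ and an arbitrary $\tau>\beta+1$. This gives $\|S(t)\|_{\La(X_\tau,X)}\lesssim t^{-\rho}$ on $[1,\infty)$ for some $\rho=\rho(\tau)\geq 0$, and in particular $\ue^{-\omega t}\|T(t)x\|_X\to 0$ as $t\to\infty$ for every $x\in X_\tau$, so $\omega_\tau(T)\leq \omega$. Letting $\omega\downarrow s_\beta(-A)$ yields $\omega_\tau(T)\leq s_\beta(-A)$ for every $\tau>\beta+1$. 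To pass to $\tau=\beta+1$, observe that $\tau\mapsto \omega_\tau(T)$ is non-increasing and bounded above by $\omega_0(T)<\infty$; either $\omega_{\beta+1}(T)=-\infty$, in which case the conclusion is immediate, or $\omega_\tau(T)$ is finite on an open neighborhood of $\beta+1$, and Lemma \ref{lem:continuity growth bound} then gives
\[
\omega_{\beta+1}(T)=\lim_{\tau\downarrow\beta+1}\omega_\tau(T)\leq s_\beta(-A).
\]

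The main subtlety is that $A_\omega$ is typically \emph{not} sectorial: whenever $\beta>0$ the family $\{\lambda R(\lambda,A_\omega)\}$ fails to be uniformly bounded outside a proper sector, so Proposition \ref{prop:general semigroups} cannot be invoked verbatim. This is the reason for appealing to the non-sectorial version in Section \ref{subsec:non sectorial operators}, where the assumption $\alpha=0\in\N_0$ is what allows one to bypass the sectorial functional calculus and still recover the required contour integral representation of the orbits.
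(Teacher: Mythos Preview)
Your proof is correct and follows essentially the same strategy as the paper's: rescale so that the shifted generator has resolvent growth $(0,\beta)$, apply Proposition~\ref{prop:general semigroups} to obtain uniform boundedness of $\|T(t)\|_{\La(X_{\beta+1+\varepsilon},X)}$, and use Lemma~\ref{lem:continuity growth bound} to pass to the endpoint $\beta+1$. The paper's write-up is terser---it invokes Proposition~\ref{prop:general semigroups} directly without commenting on sectoriality---whereas you correctly observe that the shifted operator is generally \emph{not} sectorial when $\beta>0$ and explicitly appeal to the non-sectorial extension in Section~\ref{subsec:non sectorial operators}; this is a point the paper's proof leaves implicit.
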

\begin{proof}
First note that by Lemma \ref{lem:continuity growth bound} it suffices to show that $\w_{\beta+1+\varepsilon}(T)\leq s_{\beta}(-A)$ for all $\beta\geq0$ and $\varepsilon\in (0,1)$. Also, by a scaling argument we may suppose that $s_{\beta}(-A)<0$ and prove that $\w_{\beta+1+\varepsilon}(T)\leq0$. But in this case $A$ has resolvent growth $(0,\beta)$, and Proposition \ref{prop:general semigroups} then shows that $\sup_{t\geq0}\|T(t)\|_{\La(X_{\beta+1+\veps},X)}<\infty$.
\end{proof}

\subsection{The resolvent as an $(L^p, L^q)$ Fourier multiplier}

The following theorem is the main link between exponential stability and $(\Ellp,\Ellq)$-Fourier multipliers.  This result appeared in \cite{Hieber01} and in full generality in \cite[Theorem 3.6]{Latushkin-Shvydkoy01}. Here we give a proof using Theorem \ref{thm:abstract polynomial stability}.

\begin{theorem}\label{thm:abstract stability result}
Let  $-A$ be the generator of a $C_{0}$-semigroup $(T(t))_{t\geq0}$ on a Banach space $X$, and let $\beta\in[0,\infty)$.
Then, for all $p\in[1,\infty)$ and $q\in[p,\infty]$,
\begin{align}\label{eq:identityomegaalpha}
\w_{\beta}(T)=\inf\{\w>s_{\beta}(-A)\!\mid\! (\w+\ui\cdot+A)^{-1}\!\in\!\Ma_{p,q}(\R;\La(X_{\beta},X))\}.
\end{align}
In fact, $(\w+\ui\cdot+A)^{-1}\!\in\!\Ma_{p,q}(\R;\La(X_{\beta},X))$ for all $\w>\w_{\beta}(T)$.
\end{theorem}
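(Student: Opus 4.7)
The plan is to deduce \eqref{eq:identityomegaalpha} and the ``in fact'' statement from Theorem \ref{thm:abstract polynomial stability} by passing to the rescaled semigroup. Throughout, set $\tilde A := A + \omega$, so that $-\tilde A$ generates $T_\omega(t) := \ue^{-\omega t} T(t)$ and $(\omega + \ui\xi + A)^{-1} = (\ui\xi + \tilde A)^{-1}$ is the resolvent of the generator $-\tilde A$ on the imaginary axis.

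For the ``in fact'' statement (which also delivers the inequality $\geq$ in \eqref{eq:identityomegaalpha}), I would fix $\omega > \omega_\beta(T)$ and pick $\omega_1 \in (\omega_\beta(T), \omega)$, so that \eqref{eq:growthboundY} provides $\|T(t)\|_{\La(X_\beta, X)} \leq M\ue^{\omega_1 t}$. The kernel $K(t) := \ind_{[0,\infty)}(t)\, T_\omega(t)$ then decays exponentially and belongs to $L^r(\R; \La(X_\beta, X))$ for every $r \in [1, \infty]$. Lemma \ref{lem:standardfact} applied to $-\tilde A$ with $n = 0$ identifies $\F[t \mapsto T_\omega(t) x](\xi) = (\omega + \ui\xi + A)^{-1} x$ for each $x \in X_\beta$, so the convolution operator $f \mapsto K \ast f$ coincides with $T_m$, where $m(\xi) := (\omega + \ui\xi + A)^{-1}$; Young's inequality for operator-valued kernels \cite[Proposition~1.3.5]{ArBaHiNe11} then makes $T_m$ bounded from $L^p(\R; X_\beta)$ to $L^q(\R; X)$ for every $p \in [1, \infty)$ and $q \in [p, \infty]$, which is exactly the claimed multiplier bound.

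For the reverse inequality, I would fix $\omega > s_\beta(-A)$ satisfying the multiplier hypothesis and apply Theorem \ref{thm:abstract polynomial stability} to $\tilde A$ with $n = 0$ and $Y = X_\beta$; this will yield $\sup_t \|T_\omega(t)\|_{\La(X_\beta, X)} < \infty$, and hence $\omega_\beta(T) \leq \omega$. The resolvent growth $(0, \beta)$ of $\tilde A$ is immediate from $\omega > s_\beta(-A)$ after the substitution $\mu = \lambda + \omega$. For admissibility of $X_\beta$ as an $(\tilde A, 0)$-space in the sense of Definition \ref{def:admissable spaces}, the only nontrivial point is to exhibit a dense subspace on which $T_\omega(\cdot)y$ is integrable; Lemma \ref{lem:general Banach space stability} supplies $\omega_{\beta+1}(T) \leq s_\beta(-A) < \omega$, so $\|T_\omega(t)\|_{\La(X_{\beta+1}, X)}$ decays exponentially and $Y_0 := X_{\beta+1}$ (dense in $X_\beta$) works. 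Condition (2) of Theorem \ref{thm:abstract polynomial stability} is then verified after reflecting $\xi \mapsto -\xi$ (which preserves multiplier norms) to pass between $(\omega + \ui\cdot + A)^{-1}$ and $R(\ui\cdot, \tilde A)$: with any cutoff $\psi \in C_c^\infty(\R)$, the piece $\psi R(\ui\cdot, \tilde A)$ lies in $L^1(\R; \La(X_\beta, X)) \subseteq \Ma_{1,\infty}$ by the uniform boundedness in $\La(X_\beta, X)$ coming from Proposition \ref{prop:app-equivalence growth bounds}, while $(1-\psi) R(\ui\cdot, \tilde A) \in \Ma_{p,q}$ since multiplication by $1 - \psi$ corresponds to $I - T_{\psi I_X}$ and $T_{\psi I_X}$ is bounded on $L^q$ by Young's inequality.

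The main obstacle will be verifying the admissibility condition: one needs integrability of the rescaled orbits on a dense subspace of $X_\beta$, and this is not automatic when $\omega$ lies between $s_\beta(-A)$ and $\omega_0(T)$. The resolution via Lemma \ref{lem:general Banach space stability} is precisely what makes the reduction to Theorem \ref{thm:abstract polynomial stability} clean. A secondary but unavoidable nuisance is tracking the sign and reflection convention relating the symbol $(\omega + \ui\cdot + A)^{-1}$ in the present statement with $R(\ui\cdot, A)$ in the statement of Theorem \ref{thm:abstract polynomial stability}.
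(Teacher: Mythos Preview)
Your approach is essentially the paper's, and the reverse inequality via Theorem~\ref{thm:abstract polynomial stability} applied to $\tilde A=A+\omega$ is correct; in fact your rescaling is slightly cleaner than the paper's route, which keeps $A$ fixed, assumes $\mu_{p,q,\beta}(A)<0$, and then uses a Poisson-kernel convolution (Proposition~\ref{prop:app-equivalence growth bounds} together with \cite[Theorem~5.18]{Rosenblum-Rovnyak}) to pass from $(\omega+\ui\cdot+A)^{-1}\in\Ma_{p,q}$ at some $\omega<0$ to $R(\ui\cdot,A)\in\Ma_{p,q}$, before applying Theorem~\ref{thm:abstract polynomial stability} with $\psi\equiv 0$. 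Your shifted version avoids that step entirely. Note also that you may simply take $\psi\equiv 0$; the splitting via a nontrivial cutoff works but is unnecessary here.

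There is one genuine omission in the forward direction: to conclude the inequality $\geq$ in \eqref{eq:identityomegaalpha} you must show that every $\omega>\omega_\beta(T)$ actually satisfies $\omega>s_\beta(-A)$, since the set on the right is restricted to such $\omega$. Your argument produces the $L^r$-kernel and the convolution bound, but the identification via Lemma~\ref{lem:standardfact} already presupposes $-\omega+\ui\R\subseteq\rho(A)$, and the membership in the set further requires the $(1+|\lambda|)^{-\beta}$-weighted bound on a half-plane. The paper closes this by applying Proposition~\ref{prop:decay implies resolvent bounds2} to $(\ue^{-\omega t}T(t))_{t\geq0}$, which from the $L^1$-bound on $\|T_\omega(t)\|_{\La(X_\beta,X)}$ yields $\overline{\C_-}\subseteq\rho(\tilde A)$ and the uniform bound, hence $\omega>s_\beta(-A)$. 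You should insert this citation.
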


\begin{proof}
Fix $p\in[1,\infty)$ and $q\in[p,\infty]$, and denote the right-hand side of \eqref{eq:identityomegaalpha} by $\mu_{p,q,\beta}(A)$.
We first show that $\w_{\beta}(T)\geq \mu_{p,q,\beta}(A)$. Let $\omega>\w_{\beta}(T)$, and apply Proposition \ref{prop:decay implies resolvent bounds2} to $(\ue^{-\w t}T(t))_{t\geq 0}$ to obtain $\omega>s_\beta(-A)$.  Now $(\w+\ui\cdot+A)^{-1}\in\!\Ma_{p,q}(\R;\La(X_{\beta},X))$ follows from Lemma \ref{lem:standardfact} and Young's inequality for convolutions. Hence $\omega\geq \mu_{p,q,\beta}(A)$, and the statement follows by letting $\omega\downarrow \w_{\beta}(T)$.

To prove the reverse inequality it suffices to assume that $\mu_{p,q,\beta}(A)<0$ and show that $\w_{\beta}(A)\leq 0$. Note that $R(i\cdot,A)\in \Ma_{p,q,\beta}(\R;\La(X_{\beta},X))$. Indeed, this follows by using Proposition \ref{prop:app-equivalence growth bounds} and \cite[Theorem 5.18]{Rosenblum-Rovnyak} to express $(\ui\cdot+A)^{-1}\in L^{\infty}(\R;\La(X_{\beta},X))$ as a convolution of the Poisson kernel with $(\w+\ui\cdot+A)^{-1}$ for $s_{\beta}(-A)<\w<0$, and by applying Young's inequality. From Theorem \ref{thm:abstract polynomial stability} with $\psi \equiv 0$ one now obtains $\sup_{t\geq0}\|T(t)\|_{\calL(X_{\beta},X)}<\infty$ and $\omega_\beta(T)\leq 0$. Here one may use Lemma \ref{lem:general Banach space stability} to see that $X_{\beta}$ satisfies the assumptions of Theorem \ref{thm:abstract polynomial stability}.
\end{proof}

The first part of the following theorem is \cite[Theorem 3.2]{Weis97} (see also \cite[Theorem 4.4]{vNeStWe95} and \cite[Remark 3.3]{Weis-Wrobel96}). The proof avoids the use of Mikhlin's multiplier theorem on Besov spaces (see \cite[Theorem 2.1]{Weis97}) and instead relies on the elementary Proposition \ref{prop:Lp-Lq multipliers Fourier type}. Part \eqref{item:exp2} is the main result of \cite{vanNeerven09}.

\begin{theorem}\label{thm:exponential stability}
Let $-A$ be the generator of a $C_{0}$-semigroup $(T(t))_{t\geq0}$ on a Banach space $X$. Then the following assertions hold:
\begin{enumerate}
\item\label{item:exp1} If $X$ has Fourier type $p\in[1,2]$ then $\w_{\frac{1}{p}-\frac{1}{p'}}(T)\leq s_{0}(-A)$.
\item\label{item:exp2} If $X$ has type $p\in[1,2]$ and cotype $q\in[2,\infty]$ then $\w_{\frac{1}{p}-\frac{1}{q}}(T)\leq s_{R}(-A)$.
\end{enumerate}
\end{theorem}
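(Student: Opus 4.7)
The plan is to apply Theorem~\ref{thm:abstract stability result}, verifying the required Fourier multiplier condition by combining the resolvent estimate of Proposition~\ref{prop:app-equivalence growth bounds} with either Proposition~\ref{prop:Lp-Lq multipliers Fourier type} (for part (1)) or Proposition~\ref{prop:Lp-Lq multipliers type} (for part (2)), and then passing to the sharp endpoint via Lemma~\ref{lem:continuity growth bound}.

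The first step is to establish that for $\omega>s_0(-A)$ (respectively $\omega>s_R(-A)$) and every $\beta'\geq 0$,
\begin{equation}\label{eq:keyres}
\|(\omega+\ui\xi+A)^{-1}\|_{\La(X_{\beta'},X)}\lesssim(1+|\xi|)^{-\beta'}\qquad(\xi\in\R),
\end{equation}
together with the obvious $R$-bounded analogue under the $s_R$ hypothesis. To prove this, pick $\omega_1>\omega_0(T)$ so that $B:=A+\omega_1$ is injective sectorial with spectrum contained in $\{\Real(z)>0\}$; Proposition~\ref{prop:app-equivalence growth bounds}\eqref{item:app-equivalence growth bounds 2} (in its uniform, respectively $R$-bounded, form) then supplies~\eqref{eq:keyres} with $\omega$ replaced by $\omega_1$, and the resolvent identity
\[
(\omega+\ui\xi+A)^{-1}=(\omega_1+\ui\xi+A)^{-1}+(\omega_1-\omega)(\omega+\ui\xi+A)^{-1}(\omega_1+\ui\xi+A)^{-1},
\]
combined with the uniform $\La(X)$-bound on $(\omega+\ui\xi+A)^{-1}$, transfers the estimate from the line $\Real=\omega_1$ to the line $\Real=\omega$.

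For part (1), set $\beta:=\tfrac{1}{p}-\tfrac{1}{p'}$ and $r:=1/\beta$ (with $r=\infty$ when $p=2$). In the Hilbert endpoint $p=2$ one has $\beta=0$; the uniform bound~\eqref{eq:keyres} and Plancherel (the case $r=\infty$ of Proposition~\ref{prop:Lp-Lq multipliers Fourier type}) give $(\omega+\ui\cdot+A)^{-1}\in\Ma_{2,2}(\R;\La(X))$, whence Theorem~\ref{thm:abstract stability result} yields $\omega_0(T)\leq s_0(-A)$ upon sending $\omega\downarrow s_0(-A)$. For $p\in[1,2)$, pick any $\beta'>\beta$; then $(1+|\xi|)^{-\beta'}\in L^r(\R)$ since $\beta' r>1$, and since $X_{\beta'}\cong X$ has Fourier type $p$, Proposition~\ref{prop:Lp-Lq multipliers Fourier type} gives $(\omega+\ui\cdot+A)^{-1}\in\Ma_{p,p'}(\R;\La(X_{\beta'},X))$. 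Theorem~\ref{thm:abstract stability result} then yields $\omega_{\beta'}(T)\leq s_0(-A)$ for every $\beta'>\beta$, and Lemma~\ref{lem:continuity growth bound} (applied on an open neighborhood of $\beta$ where $\beta''\mapsto\omega_{\beta''}(T)$ is finite, which may be assumed without loss of generality since the case $\omega_\beta(T)=-\infty$ is trivial) lets one pass to the limit $\beta'\downarrow\beta$ to conclude $\omega_\beta(T)\leq s_0(-A)$.

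For part (2), set $\beta:=\tfrac{1}{p}-\tfrac{1}{q}$. The endpoint $\beta=1$ (forced by $p=1$, $q=\infty$) is already Lemma~\ref{lem:general Banach space stability}, and the endpoint $\beta=0$ (forced by $p=q=2$, so $X$ is Hilbert by Kwapie\'{n}'s theorem) reduces to the Plancherel case from part (1). For $\beta\in(0,1)$, the $R$-bounded version of~\eqref{eq:keyres} shows that $\{(1+|\xi|)^{\beta'}(\omega+\ui\xi+A)^{-1}:\xi\in\R\}\subseteq\La(X_{\beta'},X)$ is $R$-bounded for every $\beta'\geq\beta$. Choosing $\beta':=\beta+\veps$ with $\veps>0$ small enough that $\beta\beta'<1$---possible precisely because $\beta<1$---ensures $\tfrac{1}{\beta'}>\beta=\tfrac{1}{p}-\tfrac{1}{q}$, so Proposition~\ref{prop:Lp-Lq multipliers type} applied with $r:=\beta'$ gives $(\omega+\ui\cdot+A)^{-1}\in\Ma_{p,q}(\R;\La(X_{\beta'},X))$; Theorem~\ref{thm:abstract stability result} combined with the continuity argument of part (1) then finishes the proof.

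The main technical obstacle is that the bound~\eqref{eq:keyres} with $\beta'=\beta$ sits exactly at the threshold of $L^r$-integrability (respectively of the strict inequality required in Proposition~\ref{prop:Lp-Lq multipliers type}), missing each by a logarithmic amount. Handling this by working at the strictly larger exponent $\beta'>\beta$ and then invoking Lemma~\ref{lem:continuity growth bound} at the endpoint is what makes it possible to avoid the Besov-space multiplier theorems used in Weis' original argument.
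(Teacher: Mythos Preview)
Your proof follows precisely the alternative route the paper mentions in its last sentence (Theorem~\ref{thm:abstract stability result} combined with Proposition~\ref{prop:app-equivalence growth bounds} and Propositions~\ref{prop:Lp-Lq multipliers Fourier type}/\ref{prop:Lp-Lq multipliers type}, plus Lemma~\ref{lem:continuity growth bound} for the endpoint), whereas the paper's primary proof simply quotes the polynomial-decay Theorems~\ref{thm:polynomial decay Fourier type} and~\ref{thm:polynomial decay type} after a scaling argument. The overall strategy is sound, and the resolvent-identity transfer from the line $\Real=\omega_1$ to $\Real=\omega$ is correct in both the uniform and $R$-bounded settings.

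There is, however, one concrete gap. Your key estimate~\eqref{eq:keyres} is claimed ``for every $\beta'\geq 0$'', but Proposition~\ref{prop:app-equivalence growth bounds}\eqref{item:app-equivalence growth bounds 2} only allows $\beta_0\in[0,1]$, and the estimate is in fact \emph{false} for $\beta'>1$: a single resolvent factor can contribute at most one power of $|\xi|^{-1}$, regardless of how much smoothing $(1+A)^{-\beta'}$ provides (try the multiplication operator $Af(s)=(1+is)f(s)$ on $L^2(\R)$). This matters for part~\eqref{item:exp1} at the endpoint $p=1$, where $\beta=\frac{1}{p}-\frac{1}{p'}=1$ and you would need $\beta'>1$ to get $L^1$-integrability of $\xi\mapsto(1+|\xi|)^{-\beta'}$. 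You handle the analogous endpoint $\beta=1$ in part~\eqref{item:exp2} by invoking Lemma~\ref{lem:general Banach space stability}; the same lemma (with $\beta=0$) covers $p=1$ in part~\eqref{item:exp1} and should be invoked there as well. For all $p\in(1,2)$ you have $\beta<1$, so choosing $\beta'\in(\beta,1]$ keeps you within the range of Proposition~\ref{prop:app-equivalence growth bounds} and the argument goes through.

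A minor remark on part~\eqref{item:exp2}: your condition $\beta\beta'<1$ and the choice $r:=\beta'$ reflect a literal reading of Proposition~\ref{prop:Lp-Lq multipliers type} in which the decay exponent is $r$ rather than $1/r$. Since you take $\beta'=\beta+\veps>\beta$ anyway, the hypothesis of that proposition (decay exponent strictly larger than $\tfrac{1}{p}-\tfrac{1}{q}$) is satisfied regardless of which convention one uses, so the argument is unaffected.
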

\begin{proof}
By Lemma \ref{lem:continuity growth bound} and a scaling argument, for \eqref{item:exp1} we may assume that $s_{0}(-A)<0$ and show that $\w_{\frac{1}{p}-\frac{1}{p'}+\veps}(T)\leq 0$ for any $\veps>0$. The latter follows directly from Theorem \ref{thm:polynomial decay Fourier type}. In the same way \eqref{item:exp2} follows from Theorem \ref{thm:polynomial decay type}. Alternatively, one can give direct proofs by combining Theorem \ref{thm:abstract stability result} with Proposition \ref{prop:app-equivalence growth bounds} and the multiplier results in Propositions \ref{prop:Lp-Lq multipliers Fourier type} and \ref{prop:Lp-Lq multipliers type}.
\end{proof}

The geometry of $X$ and regularity of the resolvent can be used to obtain $R$-bounds from uniform bounds, leading to the following corollary.

\begin{corollary}\label{cor:R-boundedness for free}
Let $-A$ be the generator of a $C_{0}$-semigroup on a Banach space $X$ with type $p\in[1,2]$ and cotype $q\in[2,\infty]$.  Then $\w_{\frac{2}{p}-\frac{2}{q}}(T)\leq s_{0}(-A)$.
\end{corollary}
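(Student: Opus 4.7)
The plan is to reduce to Theorem \ref{thm:abstract stability result} by verifying an $(L^{p},L^{q})$ Fourier multiplier condition for the resolvent. By Lemma \ref{lem:continuity growth bound} and a scaling argument, it suffices to assume $s_{0}(-A)<0$ and prove $\w_{\beta}(T)\leq 0$ for every $\beta>2(\tfrac{1}{p}-\tfrac{1}{q})$; the degenerate case $\tfrac{1}{p}-\tfrac{1}{q}=1$ is already covered by Lemma \ref{lem:general Banach space stability} (together with the monotonicity of $\w_{\beta}$ in $\beta$), so I may assume $\tfrac{1}{p}-\tfrac{1}{q}<1$ and fix $\gamma,\,1/r\in(\tfrac{1}{p}-\tfrac{1}{q},1]$ with $\gamma+1/r$ arbitrarily close to $2(\tfrac{1}{p}-\tfrac{1}{q})$. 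Under $s_{0}(-A)<0$ the family $\{(\ui\xi+A)^{-1}:\xi\in\R\}$ is uniformly bounded in $\La(X)$ and $0\in\rho(A)$.

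First I would apply Lemma \ref{lem:rbddsuff} to $f(\xi):=(1+\xi^{2})^{-\gamma/2}(\ui\xi+A)^{-1}$ regarded as an element of $\La(X)$. Setting $1/s:=\tfrac{1}{p}-\tfrac{1}{q}$, the identity $\tfrac{\ud}{\ud\xi}(\ui\xi+A)^{-1}=-\ui(\ui\xi+A)^{-2}$ together with uniform boundedness of the resolvent gives $\|f(\xi)\|_{\La(X)}+\|f'(\xi)\|_{\La(X)}\lesssim(1+|\xi|)^{-\gamma}$, which belongs to $L^{s}(\R)$ precisely because $\gamma>1/s$. Consequently $f\in W^{1,s}(\R;\La(X))$, and Lemma \ref{lem:rbddsuff} yields $R$-boundedness of $\{(1+\xi^{2})^{-\gamma/2}(\ui\xi+A)^{-1}:\xi\in\R\}$ in $\La(X)$. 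Next I would invoke Proposition \ref{prop:app-equivalence growth bounds} \eqref{item:app-equivalence growth bounds 3} with $\alpha=0$, $\beta=\gamma$, $\beta_{0}=1/r$ and $\ph=\pi/2$, so that $\Omega=\ui\R\setminus\{0\}$. The resulting $R$-boundedness in $\La(X)$ of
\[
\{(1-\lambda)^{1/r}(\lambda+A)^{-1}(1+A)^{-\gamma-1/r}-(1-\lambda)^{-\gamma}(\lambda+A)^{-1}:\lambda\in\Omega\},
\]
combined with the previous step (via $|1-\ui\xi|\eqsim 1+|\xi|$ and the Kahane contraction principle to absorb unit-modulus phase factors), transfers $R$-boundedness to the first subtracted term. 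The isomorphism $(1+A)^{-(\gamma+1/r)}\colon X\to X_{\gamma+1/r}$ recorded in \eqref{eq:norm using Phi} then reinterprets this as $R$-boundedness of $\{(1+|\xi|)^{1/r}(\ui\xi+A)^{-1}:\xi\in\R\}$ in $\La(X_{\gamma+1/r},X)$.

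Since $1/r>\tfrac{1}{p}-\tfrac{1}{q}$, Proposition \ref{prop:Lp-Lq multipliers type} now yields $(\ui\cdot+A)^{-1}\in\Ma_{p,q}(\R;\La(X_{\gamma+1/r},X))$, and Theorem \ref{thm:abstract stability result} then delivers $\w_{\gamma+1/r}(T)\leq 0$. Sending $\gamma,1/r\downarrow\tfrac{1}{p}-\tfrac{1}{q}$ and applying Lemma \ref{lem:continuity growth bound} closes the argument. The main delicate point will be the transfer step above: converting a bare $R$-bound in $\La(X)$ (obtained essentially gratis from Lemma \ref{lem:rbddsuff}) into an $R$-bound in the correct fractional scale $\La(X_{\gamma+1/r},X)$ carrying the polynomial weight $(1+|\xi|)^{1/r}$ demanded by Proposition \ref{prop:Lp-Lq multipliers type}, and this is exactly what the algebraic identity in Proposition \ref{prop:app-equivalence growth bounds} \eqref{item:app-equivalence growth bounds 3} is designed to accomplish.
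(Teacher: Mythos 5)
Your argument is correct and shares the paper's overall architecture (reduce via Lemma \ref{lem:continuity growth bound}, Lemma \ref{lem:general Banach space stability}, Theorem \ref{thm:abstract stability result} and Proposition \ref{prop:Lp-Lq multipliers type} to an $R$-bound for a polynomially weighted resolvent family in $\La(X_{2\beta},X)$, obtained ``for free'' from the Sobolev-regularity Lemma \ref{lem:rbddsuff}), but the middle step is organized differently. The paper applies Lemma \ref{lem:rbddsuff} \emph{directly} to $\xi\mapsto(1+|\xi|)^{\beta}(\w+\ui\xi+A)^{-1}$ viewed as a $W^{1,r}$-function with values in $E=\La(X_{2\beta},X)$, using only the uniform-boundedness part of Proposition \ref{prop:app-equivalence growth bounds}\,\eqref{item:app-equivalence growth bounds 2} to supply the decay $\|f(\xi)\|_{E}\lesssim(1+|\xi|)^{-\beta}$ needed for $L^{r}$-integrability; you instead apply Lemma \ref{lem:rbddsuff} in $\La(X)$ to the damped resolvent and then transfer the resulting $R$-bound to the fractional scale via the $R$-boundedness identity of Proposition \ref{prop:app-equivalence growth bounds}\,\eqref{item:app-equivalence growth bounds 3} together with the contraction principle \eqref{eq:Kahane contraction} and \eqref{eq:norm using Phi}. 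Both routes are legitimate; the paper's is a bit shorter because it avoids part \eqref{item:app-equivalence growth bounds 3} altogether. One small imprecision: the case you must exclude before invoking Lemma \ref{lem:rbddsuff} is not only $\tfrac1p-\tfrac1q=1$ but all of $q=\infty$ (the lemma requires \emph{finite} cotype) and, more generally, $2(\tfrac1p-\tfrac1q)\geq\tfrac{1}{2}\cdot 2=1$; all of these cases are indeed disposed of by Lemma \ref{lem:general Banach space stability} and monotonicity of $\beta\mapsto\w_{\beta}(T)$, exactly as the paper does when it reduces to $\beta\in(0,1/2)$, so the fix is cosmetic. You should also note (as your appeal to Proposition \ref{prop:app-equivalence growth bounds} requires) that $s_{0}(-A)<0$ forces $A$ to be injective and sectorial, via $\sigma(A)\subseteq\C_{+}$, the Hille--Yosida bound and \eqref{eq:sectoriality constant}; this is implicit in the paper as well.
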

\begin{proof}
Let $\frac1r = \frac1p-\frac1q$ and $\beta>\frac1p-\frac1q$. By Lemma \ref{lem:general Banach space stability} we may suppose that $\beta\in(0,1/2)$. By Lemma \ref{lem:continuity growth bound}, Theorem \ref{thm:abstract stability result} and Proposition \ref{prop:Lp-Lq multipliers type} it suffices to show that for each $\omega>s_0(A)$ the set $\{(1+|\xi|)^\beta (\omega+\ui\xi+A)^{-1}\mid \xi\in\R\}\subseteq\La(X_{2\beta},X)$ is $R$-bounded. Let $E := \calL(X_{2\beta}, X)$ and define $f:\R\to E$ by $f(\xi) := (1+|\xi|)^\beta (\omega+\ui\xi+A)^{-1}$ for $\xi\in\R$. Then $\|f(\xi)\|_{E}\leq C (1+|\xi|)^{-\beta}$ by Proposition \ref{prop:app-equivalence growth bounds}, so that $f\in L^r(\R;E)$. Moreover,
\begin{align*}
\|f'(\xi)\|_{\calL(E)}&\leq \beta (1+|\xi|)^{\beta-1}\|(\omega+\ui \xi+ A)^{-1}\|_{E} +  (1+|\xi|)^{\beta} \|(\omega+\ui\xi+A)^{-2}\|_{E}
\\ & \lesssim  (1+|\xi|)^{-\beta-1} + (1+|\xi|)^{-\beta}.
\end{align*}
So $f\in W^{1,r}(\R;E)$, and Lemma \ref{lem:rbddsuff} shows that the range of $f$ is $R$-bounded.
\end{proof}

For $X = L^r(\Omega)$ with $r\in [1, \infty)$, Corollary \ref{cor:R-boundedness for free} and part \eqref{item:exp1} of Theorem \ref{thm:exponential stability} yield the same conclusion. It is an open question whether in this case the index $|\frac{2}{r}-1|$ can be improved.

\begin{remark}\label{rem:extra parameter}
In Theorem \ref{thm:exponential stability} and Corollary \ref{cor:R-boundedness for free} one can add a parameter $\beta\in[0,\infty)$, as in Lemma \ref{lem:general Banach space stability}. Then Theorem \ref{thm:exponential stability} \eqref{item:exp1} says that $\w_{\beta+\frac{1}{p}-\frac{1}{p'}}(T)\leq s_{\beta}(-A)$, and \eqref{item:exp2} that $\w_{\beta+\frac{1}{p}-\frac{1}{q}}(T)\leq s_{R,\beta}(-A)$. Here 
\[
s_{R,\beta}(-A):=\inf\{\w>s(-A)\mid R_{X}(\{(1+\abs{\lambda})^{-\beta}(\lambda+A)^{-1}\mid \Real(\lambda)\geq\w\})<\infty\}.
\]
In Corollary \ref{cor:R-boundedness for free} the more general inequality is $\w_{2\beta+\frac{2}{p}-\frac{2}{p'}}(T)\leq s_{\beta}(-A)$. The proofs are the same, using Proposition \ref{prop:app-equivalence growth bounds}.
\end{remark}

\subsection{The resolvent as a Fourier multiplier on Besov spaces}\label{subsec:multipliers on Besov spaces}

In this subsection we give an alternative characterization of $\omega_{\beta}(T)$, $\beta>0$, using Fourier multipliers on Besov spaces. We then use this characterization to obtain a new stability result for positive semigroups.

For the definition and basic properties of vector-valued Sobolev and Besov spaces which are used below we refer to \cite{ScScSi12,Schmeisser-Sickel04}.

\begin{theorem}\label{abstract stability result using Bessel and Besov spaces}
Let $-A$ be the generator of a $C_{0}$-semigroup $(T(t))_{t\geq0}$ on a Banach space $X$, and let $\beta\in(0,\infty)$. Then for all $p\in[1,\infty)$ and $q\in[p,\infty]$,
\[
\w_{\beta}(T) = \inf\{\w>s_{\beta}(-A)\!\mid\! T_{m_{\omega}}\in \calL(\Be^{\beta}_{p,1}(\R;X), L^q(\R;X))\},
\]
where $m_{\omega}(\cdot):= (\w+\ui\cdot+A)^{-1}$ for $\w>s_{\beta}(-A)$.
\end{theorem}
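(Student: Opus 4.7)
The plan is to mirror the two-step proof of Theorem \ref{thm:abstract stability result}, with $L^{p}(\R;X_{\beta})$ replaced throughout by the vector-valued Besov space $\Be^{\beta}_{p,1}(\R;X)$. Denote the infimum on the right-hand side of the identity by $\mu_{B}$.

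First I would establish $\mu_{B}\le\w_{\beta}(T)$. Given $\omega>\w_{\beta}(T)$, Proposition \ref{prop:decay implies resolvent bounds2} applied to $(\ue^{-\omega t}T(t))_{t\ge 0}$ yields $\omega>s_{\beta}(-A)$, and the kernel $K(s):=\ue^{-\omega s}T(s)\ind_{[0,\infty)}(s)$ lies in $L^{r}([0,\infty);\La(X_{\beta},X))$ for every $r\in[1,\infty]$. To obtain boundedness of $T_{m_{\omega}}$ from $\Be^{\beta}_{p,1}(\R;X)$ into $L^{q}(\R;X)$, I would decompose $f\in \Be^{\beta}_{p,1}(\R;X)\cap\Sw(\R;X)$ via Littlewood--Paley, $f=\sum_{k}\Delta_{k}f$, and prove the frequency-localized estimate
\[
\|T_{m_{\omega}}(\Delta_{k}f)\|_{L^{q}(\R;X)}\lesssim 2^{k\beta}\|\Delta_{k}f\|_{L^{p}(\R;X)}
\]
uniformly in $k$; summing and using $\|f\|_{\Be^{\beta}_{p,1}}\eqsim\sum_{k}2^{k\beta}\|\Delta_{k}f\|_{L^{p}}$ would give the claim. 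The dyadic bound itself would come from the formal factorization $m_{\omega}(\xi)=[m_{\omega}(\xi)(\eta+A)^{-\beta}](\eta+A)^{\beta}$: the factor $m_{\omega}(\xi)(\eta+A)^{-\beta}$ is uniformly $\La(X)$-bounded and defines a bounded $L^{p}(\R;X)\to L^{q}(\R;X)$ Fourier multiplier by Theorem \ref{thm:abstract stability result}, while $(\eta+A)^{\beta}$, although unbounded on $X$, contributes only a factor $O(2^{k\beta})$ on the Fourier support $|\xi|\eqsim 2^{k}$ of $\Delta_{k}f$ by Proposition \ref{prop:app-equivalence growth bounds}.

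For the reverse inequality $\w_{\beta}(T)\le\mu_{B}$, I would reduce to the case $\mu_{B}<0$ and aim to show $\w_{\beta}(T)\le 0$. As in Theorem \ref{thm:abstract stability result}, I would choose $s_{\beta}(-A)<\omega<0$ and express $(\ui\cdot+A)^{-1}$ as the Poisson convolution of $m_{\omega}$, using \cite[Theorem 5.18]{Rosenblum-Rovnyak} together with Proposition \ref{prop:app-equivalence growth bounds} and Young's inequality lifted to Besov spaces, to deduce $R(\ui\cdot,A)\in\La(\Be^{\beta}_{p,1}(\R;X),L^{q}(\R;X))$. A Besov-space analogue of Theorem \ref{thm:abstract polynomial stability} (or of Proposition \ref{prop:abstract polynomial stability}) with $n=0$ and $\psi\equiv 0$ then yields $\sup_{t\ge 0}\|T(t)\|_{\La(X_{\beta},X)}<\infty$, and hence $\w_{\beta}(T)\le 0$; the required admissibility of $X_{\beta}$ is secured by Lemma \ref{lem:general Banach space stability}.

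The main obstacle is the first step: making the formal factorization $m_{\omega}=[m_{\omega}(\eta+A)^{-\beta}](\eta+A)^{\beta}$ rigorous despite the unboundedness of $(\eta+A)^{\beta}$, and establishing the frequency-localized bound $\|T_{m_{\omega}\tilde{\varphi}_{k}}\|_{L^{p}(\R;X)\to L^{q}(\R;X)}\lesssim 2^{k\beta}$ with a constant independent of the dyadic index $k$. This will require careful approximation on $\Sw(\R;X)$ and a precise interplay between the Fourier localization of the Littlewood--Paley pieces and the resolvent estimates of Proposition \ref{prop:app-equivalence growth bounds}.
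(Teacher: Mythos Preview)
Your proposal contains a genuine gap in the direction $\mu_{B}\le\w_{\beta}(T)$. The factorization $m_{\omega}=[m_{\omega}(\eta+A)^{-\beta}](\eta+A)^{\beta}$ cannot be exploited the way you suggest: the Littlewood--Paley projections $\Delta_{k}$ localize in the \emph{time-frequency} variable $\xi\in\R$, whereas $(\eta+A)^{\beta}$ is an unbounded operator on the \emph{state space} $X$. These two structures are decoupled. For a generic $f\in\Be^{\beta}_{p,1}(\R;X)$ there is no reason that $\Delta_{k}f$ take values in $X_{\beta}$, let alone that $\|(\eta+A)^{\beta}\Delta_{k}f\|_{L^{p}(\R;X)}\lesssim 2^{k\beta}\|\Delta_{k}f\|_{L^{p}(\R;X)}$. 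Proposition \ref{prop:app-equivalence growth bounds} relates $(1+|\xi|)^{-\beta}$-weighted resolvent bounds on $X$ to unweighted bounds from $X_{\beta}$ to $X$; it does \emph{not} say that time-frequency localization near $|\xi|\simeq 2^{k}$ makes $(\eta+A)^{\beta}$ act boundedly with constant $2^{k\beta}$. You are conflating the scalar Fourier variable with the spectral parameter of $A$.

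The paper's proof of this direction uses a completely different mechanism. One takes $\alpha\in(0,\beta)$ and $\omega>\w_{\alpha}(T)$, sets $S_{\alpha}f:=(\omega+A)^{-\alpha}T_{m_{\omega}}f$, and integrates by parts in the convolution representation $S_{\alpha}f(t)=\int_{-\infty}^{t}\ue^{-\omega(t-s)}T(t-s)(\omega+A)^{-\alpha}f(s)\,\ud s$ to convert each power of $(\omega+A)$ on the output into a time-derivative on the input. This gives $S_{\alpha}\in\La(W^{k,p}(\R;X),L^{q}(\R;X_{k}))$ for integer $k$, and real interpolation then yields $S_{\alpha}\in\La(\Be^{\beta}_{p,1}(\R;X),L^{q}(\R;\D_{A}(\beta,1)))$; since $\alpha<\beta$ one can absorb $(\omega+A)^{\alpha}$ back. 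Integration by parts is precisely the device that trades $A$-regularity for time-regularity, and this is the idea missing from your plan.

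For the other direction you invoke a ``Besov-space analogue of Theorem \ref{thm:abstract polynomial stability}'' that the paper does not supply and whose proof is not apparent. The paper instead constructs a specific test function $Fx(t)=t^{n}\ue^{-\omega t}T(t)x$, shows by interpolation between $F:X\to L^{p}(\R;X)$ and $F:X_{n}\to W^{n,p}(\R;X)$ that $F:\D_{A}(\alpha,1)\to\Be^{\alpha}_{p,1}(\R;X)$ for $\alpha\in(\beta,\beta+\veps)$, and then reads off $\|T(t)x\|_{X}$ from $T_{m}(Fx)$ to feed into Proposition \ref{prop:abstract polynomial stability} and Theorem \ref{thm:abstract stability result}. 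Again the bridge between Besov regularity in time and fractional-domain regularity in $X$ is real interpolation applied to a concrete operator, not an abstract multiplier statement.
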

\begin{proof}
Denote the right-hand side of \eqref{eq:identityomegaalpha} by $\nu_{p,q,\beta}(A)$. We first show that $\w_\beta(T) \leq \nu_{p,q,\beta}(A)$.
By shifting $A$ and using Lemma \ref{lem:continuity growth bound} we may assume that $\nu_{p,q,\beta}(A)<0$ and prove that $\w_{\beta+\veps}(T)\leq 0$ for any $\varepsilon>0$.  Without loss of generality we may suppose that $(\beta,\beta+\varepsilon]\cap \N = \varnothing$. Let $n := \lceil \beta+\varepsilon\rceil$, $\alpha\in (\beta, \beta+\varepsilon)$ and let $\D_{A}(\alpha,1)=(X,\D(A^{n}))_{\alpha/n,1}$ be the appropriate real interpolation space.
Set $m(\xi):= (\ui\xi+A)^{-1}$ for $\xi\in\R$. As in Theorem \ref{thm:abstract stability result} one sees that $T_{m}\in\La(\Be^{\alpha}_{p,1}(\R;X),\Ellq(\R;X))$, where we also use that $B^{\alpha}_{p,1}(\R;X)\subseteq B^{\beta}_{p,1}(\R;X)$ continuously.

Let $\w>\w_{0}(T)$ and let $F\in\La(X,L^p(\R;X))$ be given by $Fx(t) := t^n \ue^{-\w t}T(t)x$ for $t\in\R$ and $x\in X$, where we extend the semigroup by zero to all of $\R$. Then $F:X_n\to W^{n,p}(\R;X)$ is bounded and, by real interpolation, 
\[
F:\D_{A}(\alpha,1)\to (\Ellp(\R;X),\W^{n,p}(\R;X))_{\alpha/n,1} = \Be^{\alpha}_{p,1}(\R;X)
\]
is bounded. Now fix $x\in X_{\alpha+1}$ and let $f:= Fx$. Then
\begin{equation}\label{Lp-Lq-estimate}
\|T_{m}(f)\|_{\Ellq(\R;X)}\lesssim\|f\|_{\Be^{\alpha}_{p,1}(\R;X)}\lesssim\|x\|_{\D_{A}(\alpha,1)}\lesssim \|x\|_{X_{\beta+\varepsilon}},
\end{equation}
where we have also used that $X_{\beta+\varepsilon} \hookrightarrow X_{\D_{A}(\alpha,1)}$. By Lemmas \ref{lem:standardfact} and \ref{lem:general Banach space stability} one has
\begin{equation}\label{eq:multiplier is convolution}
T\ast f(t):=\int_{0}^{t}T(t-s)f(s)\ud s = T_{m}(f)(t)\qquad(t\in[0,\infty))
\end{equation}
and $T_{m}(f)(t)=0$ for $t\in(-\infty,0)$. On the other hand, $T(t-s)f(s) = s^n \ue^{-\w s}T(t) x$ for $s\in[0,t]$. Hence, for $t\geq 1$,
\[
\|T(t)x\|_{X}\lesssim \|T(t)x\|_{X}  \int_0^t s^n e^{-\w s}\ud s=\|T*f(t)\|_{X}.
\]
Now, using that $\sup_{t\in[0,1]}\|T(t)\|_{\La(X)}<\infty$, it follows from \eqref{Lp-Lq-estimate} and \eqref{eq:multiplier is convolution} that
\[
\Big(\int_0^\infty \|T(t)x\|_{X}^q \ud t\Big)^{\frac1q}\lesssim \|x\|_{X}+ \|T*f\|_{\Ellq([1,\infty);X)}\lesssim\|x\|_{X_{\beta+\varepsilon}}.
\]
Therefore Proposition \ref{prop:abstract polynomial stability} implies that $R(i\cdot,A)\in \Ma_{1,q}(\R;\La(X_{\beta+\varepsilon},X))$. Here one may again use Lemma \ref{lem:general Banach space stability} to see that $X_{\beta+\veps}$ satisfies the conditions of Proposition \ref{prop:abstract polynomial stability}. Finally, Theorem \ref{thm:abstract stability result} shows that $\omega_{\beta+\varepsilon}(A)\leq 0$.

Next, we prove that $\w_\beta(T) \geq \nu_{p,q,\beta}(A)$. To do so it suffices by Lemma \ref{lem:continuity growth bound} to show that for all $\alpha\in(0,\beta)$ and $\omega>\w_{\alpha}(T)$ one has
$T_{m_{\omega}}\in \calL(\Be^{\beta}_{p,1}(\R;X), L^q(\R;X))$. Moreover, we may suppose that $\alpha\notin \N$. Let $n\in \N$ be such that $\alpha, \beta\in (n-1, n]$. By \eqref{eq:growthboundY} there exist $M,\varepsilon\in(0,\infty)$ such that $\ue^{-\w t}\|T(t)\|_{\La(X_{\alpha},X)}\leq M e^{-\varepsilon t}$ for all $t\geq 0$. For $f\in\Sw(\R;X)$ set $S_{\alpha} f := (\w+ A)^{-\alpha}T_{m_\omega}f$. Then $S_{\alpha}\in\La(L^p(\R;X),L^q(\R;X))$ by Theorem \ref{thm:abstract stability result}. We claim that $S_{\alpha}\in\La(W^{k,p}(\R;X),L^q(\R;X_k))$ for each $k\in[1,\ldots, n\}$. Indeed, let $f\in C^k_c(\R)\otimes X$ and $t\in[0,\infty)$. Lemma \ref{lem:standardfact} and integration by parts yield
\begin{align*}
&(\w+A)^k S_{\alpha} f(t)  = \int_{-\infty}^t (\w+A)^k e^{-\w(t-s)}T(t-s) (\w+A)^{-\alpha}f(s) \ud s
\\ &  = -\int_{-\infty}^t \frac{d}{ds} [(\w+A)^{k-1} e^{-\w(t-s)}T(t-s)] (\w+A)^{-\alpha}f(s) \ud s
\\ &  = -(\w+A)^{k-1-\alpha} f(t) + \int_{-\infty}^t (\w+A)^{k-1} e^{\w(t-s)}T(t-s) (\w+ A)^{-\alpha}f'(s) \ud s
\\ & = -(\w+A)^{k-1-\alpha} f(t) + (\w+A)^{k-1} S_{\alpha} f'(t).
\end{align*}
By iterating this procedure one obtains
\[
(\w+A)^k S_{\alpha} f(t) = -\sum_{j=1}^{k}   (\w+A)^{k-j-\alpha} f^{(j-1)}(t)  + S_{\alpha} f^{(k)}(t).
\]
Since $k-1-\alpha<0$ this yields
\begin{align*}
\|(\w+A)^k S_{\alpha} f\|_{L^q(\R;X)} & \leq \sum_{j=1}^{k} \|(\w+A)^{k-j-\alpha} f^{(j-1)}\|_{L^q(\R;X)} +  \|S_{\alpha} f^{(k)}\|_{L^q(\R;X)}
\\ & \lesssim \|f\|_{W^{k-1,q}(\R;X)} + \|f^{(k)}\|_{L^p(\R;X)}\lesssim  \|f\|_{W^{k,p}(\R;X)},
\end{align*}
and the claim follows since $(\w+A)^{-k}:X\to X_{k}$ is an isomorphism.

Now, if $\beta=n$ then $B^{\beta}_{p,1}(\R;X)\subseteq W_{n,p}(\R;X)$ continuously so $S_{\alpha}:\Be^{\beta}_{p,1}(\R;X)\to L^q(\R;X_{\beta})$ is bounded. On the other hand, if $\beta<n$ then real interpolation for the exponents $k=n-1$ and $k = n$ shows that
$S_{\alpha}\in\La(\Be^{\beta}_{p,1}(\R;X),L^q(\R;\D_{A}(\beta,1)))$. Since $\alpha<\beta$, in both cases we obtain that $T_{m_\omega}:\Be^{\beta}_{p,1}(\R;X)\to L^q(\R;X)$ is bounded, which concludes the proof.
\end{proof}

The following theorem unifies \cite[Theorem 1]{Weis95} and \cite[Corollary 1.3]{vanNeerven09} and is new for $1\leq p<q<2$ and $2<p<q<\infty$. Here there is no use in adding an additional parameter $\beta$ as in Lemma \ref{lem:general Banach space stability} and Remark \ref{rem:extra parameter}, since $s_{0}(-A)=s(-A)$.

\begin{theorem}\label{thm:exponential stability convex}
Let $-A$ be the generator of a positive $C_{0}$-semigroup $(T(t))_{t\geq 0}$ on a Banach lattice $X$ which is $p$-convex and $q$-concave for $p\in[1,\infty)$ and $q\in[p,\infty)$. Then $\w_{\frac{1}{p}-\frac{1}{q}}(T)\leq s(-A)$.
\end{theorem}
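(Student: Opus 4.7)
Set $\beta:=\tfrac{1}{p}-\tfrac{1}{q}$. The plan is to combine the Besov-multiplier characterization of $\omega_\beta(T)$ in Theorem \ref{abstract stability result using Bessel and Besov spaces} with the positive-kernel multiplier bound of Proposition \ref{prop:positive kernel Lp-Lq multipliers}. By Lemma \ref{lem:continuity growth bound} and a rescaling I may assume $s(-A)<0$ and aim to prove $\omega_\beta(T)\leq 0$. A classical consequence of positivity is the resolvent estimate
\[
\|(\omega+\ui\xi+A)^{-1}\|_{\La(X)}\leq \|(\omega+A)^{-1}\|_{\La(X)}\qquad (\omega>s(-A),\ \xi\in\R),
\]
arising from the Pringsheim-type bound $|(\omega+\ui\xi+A)^{-1}x|\leq(\omega+A)^{-1}|x|$ for positive Laplace transforms, together with the monotonicity $(\omega+A)^{-1}\geq(\lambda+A)^{-1}$ in the positive cone for real $s(-A)<\omega<\lambda$. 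This forces $s_\eta(-A)=s(-A)$ for every $\eta\geq 0$, so Theorem \ref{abstract stability result using Bessel and Besov spaces} reduces the problem to producing, for each $\omega\in(s(-A),0)$, a Fourier multiplier bound $T_{m_\omega}\in\La(B^\beta_{p,1}(\R;X),L^q(\R;X))$, where $m_\omega(\xi):=(\omega+\ui\xi+A)^{-1}$.

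Fix such an $\omega$. The operator-valued kernel $K_\omega(t):=\ind_{[0,\infty)}(t)\ue^{-\omega t}T(t)$ is positive by positivity of $T$ and has Fourier transform $m_\omega$, and the $p$-convexity and $q$-concavity of $X$ are precisely the geometric hypotheses of Proposition \ref{prop:positive kernel Lp-Lq multipliers}. Granting its application, that proposition yields $T_{m_\omega}\in\La(H^\beta_p(\R;X),L^q(\R;X))$ with norm controlled by $\|(\omega+A)^{-1}\|_{\La(X)}$, after which the continuous embedding $B^\beta_{p,1}(\R;X)\hookrightarrow H^\beta_p(\R;X)$ (valid for any Banach space $X$, via the isomorphism property of $(1-\Delta)^{\beta/2}$ between Besov spaces combined with the elementary embedding $B^0_{p,1}\hookrightarrow L^p$) concludes the argument via Theorem \ref{abstract stability result using Bessel and Besov spaces}.

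The delicate point is hypothesis \eqref{KcondL1} of Proposition \ref{prop:positive kernel Lp-Lq multipliers}: one needs $K_\omega(\cdot)x\in L^1(\R;X)$ for every $x\in X$. This is automatic when $\omega>\omega_0(T)$, but I need it for all $\omega>s(-A)$, and in general $s(-A)$ can be strictly smaller than $\omega_0(T)$. My strategy is the approximation argument indicated in the remark following Proposition \ref{prop:positive kernel Lp-Lq multipliers}: apply the proposition to the trivially integrable truncations $K_{\omega,N}:=\ind_{[0,N]}K_\omega$, whose symbols are $m_{\omega,N}(\xi)=\int_0^N\ue^{-(\omega+\ui\xi)t}T(t)\,\ud t$. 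Since $m_{\omega,N}(0)|x|$ is an increasing net in the positive cone dominated by $(\omega+A)^{-1}|x|$, lattice monotonicity of the norm yields a uniform bound $\|m_{\omega,N}(0)\|_{\La(X)}\lesssim\|(\omega+A)^{-1}\|_{\La(X)}$ independent of $N$. Combined with strong pointwise convergence $m_{\omega,N}(\xi)\to m_\omega(\xi)$, itself a consequence of the Laplace-transform representation of the resolvent for positive semigroups at $\omega>s(-A)$, a Fatou/density argument transfers the uniform multiplier estimate from $m_{\omega,N}$ to $m_\omega$, closing the gap.
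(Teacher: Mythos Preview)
Your proposal is correct and follows essentially the same route as the paper: reduce via Theorem \ref{abstract stability result using Bessel and Besov spaces} to a Besov-to-$L^q$ multiplier bound for $m_\omega$, invoke Proposition \ref{prop:positive kernel Lp-Lq multipliers} on a positive-kernel approximation, and pass to the limit. The only genuine difference is the choice of approximation used to enforce condition \eqref{KcondL1}. The paper regularizes with the resolvent, setting $K_n(t)=\ue^{-\omega t}T(t)\,n(n+A)^{-1}$; integrability then comes from Lemma \ref{lem:general Banach space stability} (since $n(n+A)^{-1}x\in X_1$ and $\omega_1(T)\leq s_0(-A)<\omega$), the uniform bound on $m_n(0)$ from sectoriality of $A+\eta$, and the strong convergence $m_n(\xi)x\to m(\xi)x$ from $n(n+A)^{-1}\to I$. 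You instead truncate in time, which makes integrability trivial but shifts the burden onto the improper Laplace representation of $(\omega+A)^{-1}$ for positive semigroups at abscissa $\omega>s(-A)$, which you need for both the uniform bound $\|m_{\omega,N}(0)\|\leq\|(\omega+A)^{-1}\|$ and the strong pointwise convergence. Both approximations are legitimate and the limiting step is the same Fatou-type lemma (the paper cites \cite[Lemma 3.1]{Rozendaal-Veraar17a}). Your version is slightly more self-contained in that it avoids Lemma \ref{lem:general Banach space stability}, at the cost of leaning more explicitly on the positive-semigroup Laplace theory; the paper's version packages that dependence into the single citation $s_0(-A)=s(-A)$.
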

\begin{proof}
First note that $s_{0}(-A)=s(-A)$ (see \cite[Theorem 5.3.1]{ArBaHiNe11}). Let $r\in (1, \infty]$ satisfy $\frac1r = \frac{1}{p} - \frac{1}{q}$ and let $\omega>s_{0}(-A)$. By Theorem \ref{abstract stability result using Bessel and Besov spaces} it suffices to show that $T_{m}\in \La(B^{1/r}_{p,1}(\R;X),L^{q}(\R;X))$ for $m(\xi) := (\omega+\ui\xi+A)^{-1}$, $\xi\in\R$. For $n\in\N$ with $n>\w_{0}(T)$ set $K_{n}(t) := e^{-\w t}T(t) n(n+A)^{-1}$, $t\geq 0$, and let $K_{n}\equiv 0$ on $(-\infty,0)$. Then $K_{n}(t)\in\La(X)$ is positive for all $t\in\R$, and $K_n(\cdot)x\in L^1(\R;X)$ for all $x\in X$ by Lemma \ref{lem:general Banach space stability}. Furthermore, $\F(K_{n}x)(\xi) = m_n(\xi)x$ for $\xi\in\R$, where $m_n(\xi) := (\omega+\ui\xi+A)^{-1}n(n+A)^{-1}$. Note that $\sup_{\xi\in\R}  \|m_n(\xi)\|_{\La(X)}<\infty$. Now, since $X$ has cotype $q<\infty$ (see \cite[p.\ 332]{DiJaTo95}), it follows from Proposition \ref{prop:positive kernel Lp-Lq multipliers} and the continuous embedding $B^{1/r}_{p,1}(\R;X)\subseteq H^{1/r}_{p}(\R;X)$ that
\[
C:=\sup_{n}\|T_{m_n}\|_{\calL(B^{1/r}_{p,1}(\R;X),L^{q}(\R;X))} <\infty.
\]
Now fix $f\in \Sw(\R;X)$. Then $\|T_{m_n}(f)\|_{L^{q}(\R;Y)} \leq C\|f\|_{B^{1/r}_{p,1}(\R;X)}$ for all $n$. Moreover, for $\xi\in\R$ and $x\in X$ one has $\lim_{n\to \infty} m_n(\xi) x =m(\xi)x$ by \cite[Lemma 3.4]{Engel-Nagel00}. Now \cite[Lemma 3.1]{Rozendaal-Veraar17a} implies that $T_{m}\in\La(B^{1/r}_{p,1}(\R;X),L^{q}(\R;X))$, as required.
\end{proof}

Using Theorem \ref{thm:exponential stability convex} and \cite[Example 5.5b]{Girardi-Weis03b} one can modify an example due to Arendt (see \cite[Example 5.1.11]{ArBaHiNe11}, \cite[Section 4]{Weis-Wrobel96} and \cite[Example 1.4]{vanNeerven09}) to construct for all $1\leq p\leq q<\infty$ a positive $C_{0}$-semigroup $(T(t))_{t\geq0}$ on $X=L^p(1, \infty)\cap L^q(1, \infty)$ with generator $-A$ such that $\w_{0}(T)=-\frac{1}{p}$, 
\[
\omega_{\frac1p - \frac1q}(T) = s_{R}(-A)=s_0(-A)=-\frac{1}{q},
\]
and such that $\alpha\mapsto \w_{\alpha}(T)$ is linear on $[0,\frac{1}{p}-\frac{1}{q}]$. This shows that the index $\frac{1}{p}-\frac{1}{q}$ in part \eqref{item:exp2} of Theorem \ref{thm:exponential stability} and in Theorem \ref{thm:exponential stability convex} is optimal, which shows in turn that \cite[Theorem 3.24]{Rozendaal-Veraar17a} is optimal. Moreover, it follows from \cite[Example 4.4]{Weis97} that the positivity assumption in Theorem \ref{thm:exponential stability convex} cannot be omitted.

\addtocontents{toc}{\protect\setcounter{tocdepth}{0}}
\subsection*{Acknowledgements}

Part of this research was conducted during a stay of the first author at the Institut Mittag-Leffler for the research program 'Interactions between Partial Differential Equations \& Functional Inequalities'. He would like to thank the organizers of the program, and in particular Bogus\l{}aw Zegarli\'n{}ski, for the invitation to the Institut Mittag-Leffler.

The authors would like to thank Fokko van de Bult for helpful discussions which led to Lemma \ref{lem:exp}, and Yuri Tomilov, Lassi Paunonen and Reinhard Stahn for useful comments. We would also like to thank the referee for carefully reading the manuscript.

\addtocontents{toc}{\protect\setcounter{tocdepth}{2}}

\appendix
\addtocontents{toc}{\protect\setcounter{tocdepth}{1}}

\section{Technical estimates}\label{sec:technical lemmas}

In this section we provide the proofs of a few technical results which are used in the main text.

\subsection{Contour integrals}

We start with a lemma which is needed when dealing with certain contour integrals in Proposition \ref{prop:app-equivalence growth bounds}.

\begin{lemma}\label{lem:app-auxiliary lemma}
Let $\ph\in(0,\tfrac{\pi}{2}]$ and $\theta\in(\pi-\ph,\pi)$. Set $\Omega:=\overline{\C_{+}}\setminus (S_{\ph}\cup\{0\})$ and let $\Gamma:=\{r\ue^{\ui\theta}\mid r\in[0,\infty)\}\cup\{r\ue^{-\ui\theta}\mid r\in[0,\infty)\}$ be oriented from $\infty\ue^{\ui\theta}$ to $\infty\ue^{-\ui\theta}$. Then for all $\alpha\in[0,\infty)$, $\beta\in(0,\infty)$, $\eta\in(0,1]$ and $\lambda\in \Omega$ one has
\begin{equation}\label{eq:app-auxiliary equation 1}
\frac{1}{2\pi\ui}\int_{\Gamma}\frac{z^{\alpha}}{(\eta+z)^{\alpha+\beta}(z+\lambda+\eta-1)}\ud z=\frac{(1-\eta-\lambda)^{\alpha}}{(1-\lambda)^{\alpha+\beta}}.
\end{equation}
Furthermore, for all $\gamma\in[1,\infty)$ and $\delta\in[0,\infty)$ there exists a constant $C\in[0,\infty)$ such that
\begin{equation}\label{eq:app-auxiliary equation 2}
\frac{\abs{z}^{\gamma}}{\abs{1+z}^{\gamma+\delta}}\frac{\abs{1-\lambda}}{\abs{z+\lambda}}\leq C\quad \text{and}\quad \frac{1+\abs{\lambda}}{\abs{\frac{1}{2}+z}^{\delta}\abs{z+\lambda-\frac{1}{2}}}\leq C
\end{equation}
for all $z\in\Gamma$ and $\lambda\in\Omega$.
\end{lemma}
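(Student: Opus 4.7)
The plan is to split the lemma into the contour-integral identity \eqref{eq:app-auxiliary equation 1} and the uniform estimates \eqref{eq:app-auxiliary equation 2}, which require rather different techniques.

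For \eqref{eq:app-auxiliary equation 1} I will close the unbounded path $\Gamma$ by a large circular arc in the right half-plane, so that the combined contour becomes the positively oriented boundary of a truncation of $S_{\theta}$. Since $\theta<\pi$, the branch cuts of $z^{\alpha}$ (taken along $(-\infty,0]$) and of $(\eta+z)^{\alpha+\beta}$ (taken along $(-\infty,-\eta]$) both lie in $\C\setminus\overline{S_{\theta}}$, so the integrand is meromorphic inside the closed contour with a single simple pole at $z_{0}:=1-\eta-\lambda$. For $\beta>0$ the integrand decays like $|z|^{-\beta-1}$ at infinity, hence the arc contribution vanishes as the radius tends to $\infty$, and the residue theorem yields the claimed value $z_{0}^{\alpha}(\eta+z_{0})^{-\alpha-\beta}=(1-\eta-\lambda)^{\alpha}(1-\lambda)^{-\alpha-\beta}$. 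It remains to verify $z_{0}\in S_{\theta}$: since $\pi-\theta<\ph$ implies $\overline{S_{\pi-\theta}}\setminus\{0\}\subseteq S_{\ph}$, it suffices to show $\lambda-t\notin S_{\ph}$ for every $t\in[0,1]$. This follows from a short dichotomy: if $\Real(\lambda)\leq t$ then $\Real(\lambda-t)\leq 0$ keeps $\lambda-t$ outside $S_{\ph}\subseteq\C_{+}$, while if $\Real(\lambda)>t$ the hypothesis $\lambda\notin S_{\ph}$ gives $|\Imag(\lambda-t)|=|\Imag(\lambda)|\geq\Real(\lambda)\tan\ph>\Real(\lambda-t)\tan\ph$.

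The uniform bounds in \eqref{eq:app-auxiliary equation 2} rest on two elementary geometric observations. First, since $-1,-\tfrac{1}{2}\in\C\setminus\overline{S_{\theta}}$, on $\Gamma$ one has $|1+z|\eqsim 1+|z|\eqsim|\tfrac{1}{2}+z|$ with implicit constants depending only on $\theta$. Second, for $\lambda\in\Omega$ one has $|\arg(-\lambda)|\leq\pi-\ph$, so for $z\in\Gamma$ the angular separation between $z$ and $-\lambda$ is at least $\theta+\ph-\pi>0$; the law of cosines then yields $|z+\lambda|\geq\sin(\theta+\ph-\pi)\max(|z|,|\lambda|)\gtrsim|z|+|\lambda|$. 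The same reasoning applied to $z-\tfrac{1}{2}$, which satisfies $|\arg(z-\tfrac{1}{2})|\in[\theta,\pi]$ because $z-\tfrac{1}{2}$ has strictly negative real part for $z\in\Gamma$, combined with the easy uniform lower bound $|z-\tfrac{1}{2}|\gtrsim 1$, yields $|z+\lambda-\tfrac{1}{2}|\gtrsim 1+|\lambda|$. Combining these with $|1-\lambda|\leq 1+|\lambda|$ and a brief case split on $|\lambda|\gtrless 1$ and $|z|\gtrless|\lambda|$ finishes the first inequality; the hypothesis $\gamma\geq 1$ enters precisely in the regime $|z|<|\lambda|\leq 1$ to absorb the factor $|z|^{\gamma}$ via $|z|^{\gamma}\leq|\lambda|^{\gamma-1}|\lambda|$. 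The second inequality then follows immediately from the two angular lower bounds above.

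The main difficulty is the pole-location step in \eqref{eq:app-auxiliary equation 1}: although geometrically natural, chaining the hypotheses $\theta>\pi-\ph$, $\lambda\in\overline{\C_{+}}\setminus S_{\ph}$, and $\eta\in(0,1]$ to conclude $z_{0}\in S_{\theta}$ requires the sector-shift argument above. The degenerate case $z_{0}=0$ is automatically excluded because it would force $\lambda=1-\eta\in[0,\infty)$, which is incompatible with $\lambda\in\Omega$ since $(0,\infty)\subseteq S_{\ph}$ and $0\notin\Omega$.
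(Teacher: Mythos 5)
Your proposal is correct and follows essentially the same route as the paper: the identity \eqref{eq:app-auxiliary equation 1} is obtained by closing $\Gamma$ into the positively oriented boundary of a truncated sector and applying the residue theorem at the simple pole $z_{0}=1-\eta-\lambda$, and the bounds \eqref{eq:app-auxiliary equation 2} come from the elementary geometric fact that $\Gamma$ (and its translate by $-\tfrac{1}{2}$) keeps an angular distance of at least $\theta+\ph-\pi$ from $-\Omega$. Your explicit verification that $z_{0}\in S_{\theta}$ and the transversality bound $\abs{z+\lambda}\gtrsim\abs{z}+\abs{\lambda}$ are spelled out in slightly more detail than the corresponding steps in the paper (which instead truncates near the origin, uses the triangle inequality $\abs{1-\lambda}\leq\abs{1+z}+\abs{z+\lambda}$, and a scaling argument), but the substance is the same.
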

\begin{proof}

Let $\alpha\in[0,\infty)$, $\beta\in(0,\infty)$, $\eta\in(0,1]$ and $\lambda\in\Omega$. For $r\in (0,\Imag(\lambda)/2]$ and $R\geq 2\abs{\lambda}+2$ set $\Gamma_{+}:=\{s\ue^{\ui\theta}\mid s\in[r,R]\}$, $\Gamma_{-}:=\{s\ue^{-\ui\theta}\mid s\in[r,R]\}$, $\Gamma_{r}:=\{r \ue^{\ui\nu}\mid \nu\in[-\theta,\theta]\}$ and $\Gamma_{R}:=\{R\ue^{\ui\nu}\mid \nu\in[-\theta,\theta]\}$, and let $\Gamma_{r,R}:=\Gamma_{+}\cup\Gamma_{r}\cup\Gamma_{-}\cup\Gamma_{R}$ be oriented counterclockwise. Then
\begin{align*}
\int_{\Gamma_{R}}\frac{\abs{z}^{\alpha}}{\abs{\eta+z}^{\alpha+\beta}\abs{z+\lambda+\eta-1}}\ud \abs{z}&=\int_{-\theta}^{\theta}\frac{R^{1+\alpha}}{\abs{\eta+R\ue^{\ui\nu}}^{\alpha+\beta}\abs{R\ue^{\ui\nu}+\lambda+\eta-1}}\ud \nu\\
&=R^{-\beta}\int_{-\theta}^{\theta}\frac{1}{\abs{\frac{\eta}{R}+\ue^{\ui\nu}}^{\alpha+\beta}\abs{\ue^{\ui\nu}+\frac{\lambda+\eta-1}{R}}}\ud \nu\\
&\leq 2^{2+\alpha+\beta}\theta R^{-\beta},
\end{align*}
and the latter tends to zero as $R\to \infty$. Similarly, one sees that
\[
\int_{\Gamma_{r}}\frac{\abs{z}^{\alpha}}{\abs{\eta+z}^{\alpha+\beta}\abs{z+\lambda+\eta-1}}\ud \abs{z}
\]
tends to zero as $r\to 0$. Now Cauchy's integral theorem yields
\begin{align*}
&\frac{1}{2\pi\ui}\int_{\Gamma}\frac{z^{\alpha}}{(\eta+z)^{\alpha+\beta}(z+\lambda+\eta-1)}\ud z\\
&=\lim_{r\to0, R\to\infty}\frac{1}{2\pi\ui}\int_{\Gamma_{r,R}}\frac{z^{\alpha}}{(\eta+z)^{\alpha+\beta}(z+\lambda+\eta-1)}\ud z=\frac{(1-\eta-\lambda)^{\alpha}}{(1-\lambda)^{\alpha+\beta}},
\end{align*}
which proves \eqref{eq:app-auxiliary equation 1}.

Next, let $\gamma\in[1,\infty)$, $\delta\in(0,\infty)$, $z\in \Gamma$ and $\lambda\in\Omega$. Note that $\abs{z+\lambda}=\abs{z}\,\abs{\ue^{\pm\ui\theta}+\lambda'}$ for some $\lambda'\in\Omega$. Since the distance from $\ue^{\pm\ui\theta}$ to $-\Omega$ is nonzero, there exists a constant $C_{1}\in(0,\infty)$ such that $\abs{z+\lambda}\geq C_{1}\abs{z}$. Hence
\begin{align*}
\frac{\abs{z}^{\gamma}}{\abs{1+z}^{\gamma+\delta}}\frac{\abs{1-\lambda}}{\abs{z+\lambda}}&\leq \frac{\abs{z}^{\gamma}}{\abs{1+z}^{\gamma+\delta}}\Big(\frac{\abs{1+z}}{\abs{z+\lambda}}+1\Big)
 \leq \frac{\abs{z}^{\gamma}}{\abs{1+z}^{\gamma+\delta}}\Big(\frac{C_{1}^{-1}}{\abs{z}}+C_{1}^{-1}+1\Big),
\end{align*}
and the latter is uniformly bounded in $z\in \Gamma$.

For the second term in \eqref{eq:app-auxiliary equation 2} first note that the distances from $z-\frac{1}{2}$ to $\Gamma$, and hence to $-\Omega$, and from $z+\frac{1}{2}$ to $0$ are bounded uniformly from below by a constant $C_{2}>0$. Hence $\abs{z+\lambda-\frac{1}{2}}\geq C_{2}$ and $\abs{\frac12 + z}\geq C_{2}$ for all $z\in\Gamma$ and $\lambda\in\Omega$. Therefore, for the second term in \eqref{eq:app-auxiliary equation 2} it suffices to bound $\frac{\abs{\lambda}}{\abs{z+\lambda-\frac{1}{2}}}$ uniformly. Let $\nu\in[\frac{\pi}{2},\frac{\pi}{2}]$ be such that $\lambda = |\lambda| e^{i\nu}$, and set $w := \frac{z}{\abs{\lambda}} \in \Gamma$. Then
\[
\frac{\abs{\lambda}}{\abs{z+\lambda-\frac{1}{2}}} = \frac{1}{\abs{w+e^{i\nu}-\frac{1}{2|\lambda|}}}.
\]
Now the required results follows, since by geometric inspection one sees that
\[
\abs{(w-\tfrac{1}{2|\lambda|})-(-e^{i\nu})}\geq \rm{dist}(\Gamma, -e^{i\nu})\geq \rm{dist}(\Gamma, -e^{i\ph}).\qedhere
\]
\end{proof}

\subsection{Estimates for exponential functions}

The following lemma provides a two-sided exponential estimate, one part of which is used in Example \ref{ex:optimalitybeta}.

\begin{lemma}\label{lem:exp}
Let $m\in\N$. Then
\begin{equation}\label{eq:exp lemma}
\frac{\ue^{m}}{m^{1/4} \ue^2} \leq \Big(\sum_{j=0}^m \Big(\frac{m^j}{j!}\Big)^2\Big)^{1/2} \leq \frac{\ue^{m}}{m^{1/4}}.
\end{equation}
\end{lemma}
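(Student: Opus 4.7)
Write $a_j := m^j/j!$ for $0 \le j \le m$ and $S_m := \sum_{j=0}^m a_j^2$, so the lemma is equivalent to $e^{2m-4}/\sqrt{m} \le S_m \le e^{2m}/\sqrt{m}$. The approach relies on two ingredients: the quantitative Stirling inequality
\[
\sqrt{2\pi m}\,(m/e)^m \le m! \le \sqrt{2\pi m}\,(m/e)^m e^{1/(12m)},
\]
and the observation that $a_{j+1}/a_j = m/(j+1) \ge 1$ for $j < m$, which makes $(a_j)_{j=0}^m$ nondecreasing with peak $a_m$.

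The upper bound is immediate. One estimates $S_m \le a_m \sum_{j=0}^m a_j \le a_m e^m$, and the lower Stirling bound gives $a_m \le e^m/\sqrt{2\pi m}$, so $S_m \le e^{2m}/\sqrt{2\pi m} \le e^{2m}/\sqrt{m}$.

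For the lower bound the plan is to retain only the indices within $\sqrt{m}$ of the peak. With $K := \lfloor\sqrt{m}\rfloor$ and $0 \le i \le K$, the telescoping identity $a_{m-i}/a_m = \prod_{l=0}^{i-1}(1 - l/m)$ has all factors of the form $1 - x$ with $x \le 1/\sqrt{m} \le 1/2$. Applying $\log(1-x) \ge -x - x^2$ termwise and bounding the resulting sums of $l$ and $l^2$ gives
\[
\log(a_{m-i}/a_m) \ge -\frac{i(i-1)}{2m} - \frac{i(i-1)(2i-1)}{6m^2} \ge -\frac{1}{2} - \frac{1}{3\sqrt{m}}
\]
uniformly in $i \le K$. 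Combining this with the upper Stirling bound, which yields $a_m^2 \ge e^{2m - 1/6}/(2\pi m)$, and with $K+1 \ge \sqrt{m}$, one obtains
\[
S_m \ge \sqrt{m}\,e^{-1 - 2/(3\sqrt{m})}\,\frac{e^{2m - 1/6}}{2\pi m} \ge \frac{e^{2m - 11/6}}{2\pi\sqrt{m}}.
\]
Since $e^{13/6} > 2\pi$, the right-hand side exceeds $e^{2m - 4}/\sqrt{m}$ for every $m \ge 1$, completing the proof.

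The main technical point is simply verifying that the numerical constants coming from Stirling and from the Taylor estimate combine into a bound $e^{2m - C}/\sqrt{m}$ with $C$ small enough that $e^{4-C}/(2\pi) \ge 1$; the calculation above gives $C = 11/6$, which works.
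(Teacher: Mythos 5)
Your proof is correct and follows essentially the same strategy as the paper: the upper bound via $S_m\leq a_{\max}\sum_j a_j\leq a_{\max}\ue^{m}$ with Stirling, and the lower bound by retaining the $\lfloor\sqrt{m}\rfloor+1$ terms nearest the peak and showing each is within a fixed constant of $\ue^{m}/\sqrt{m}$. The only (immaterial) difference is that you control $a_{m-i}/a_m$ by the telescoping product $\prod_{l<i}(1-l/m)$ with a second-order logarithm bound, whereas the paper applies Stirling directly to $(m-k)!$ and uses $\log(1-s)\leq -s$.
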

\begin{proof}
Both estimates are clear for $m=1$, so we may consider $m\geq 2$ throughout. Let $k\in\N\cap [1,\lfloor \sqrt{m}\rfloor]$ and note that
\begin{align*}
\log\Big(\frac{m^{m-k}}{(m-k)!}\Big) & = (m-k)\log(m) - \log((m-k)!)
\\ & \geq (m-k)\log(m) - (m-k+\tfrac12) \log(m-k)  + m-k - 1,
\end{align*}
where we used Stirling's formula. Moreover,
\[
\log(m-k) = \log(m) + \log(1-\tfrac{k}{m}) \leq \log(m) - \tfrac{k}{m},
\]
where we used that $\log(1-s)\leq -s$ for $s\in (0,1)$. Hence
\begin{align*}
\log\Big(\frac{m^{m-k}}{(m-k)!}\Big) &\geq (m-k)\log(m) - (m-k+\tfrac12)(\log(m) - \tfrac{k}{m}) + m-k-1
\\ &  \geq -\frac12\log(m) -\frac{k^2}{m} + m -1\geq -\frac12\log(m)  +m -2,
\end{align*}
where in the last step we used that $k^2\leq m$ holds.
We now see that $\frac{m^{m-k}}{(m-k)!}\geq m^{-1/2} \ue^{-2} \ue^{m}$, from which we deduce the first inequality in \eqref{eq:exp lemma}:
\[
\sum_{j=0}^m \Big(\frac{m^j}{j!}\Big)^2 \geq \sum_{k=0}^{\lfloor \sqrt{m}\rfloor} \Big(\frac{m^{m-k}}{(m-k)!}\Big)^2 \geq m^{\frac12} m^{-1} \ue^{-4} \ue^{2m} = m^{-1/2}\ue^{-4}  \ue^{2m}.
\]

For the second inequality let $a_j := \frac{m^j}{j!}$ for $j\in \{0, \ldots, m\}$. Then another application of Stirling's formula yields
\[
a_{m-1}\leq  \frac{m^{m-1}}{\sqrt{2\pi} (m-1)^{m-\frac12} e^{-(m-1)}} = \frac{e^{m}}{e\sqrt{2\pi}\sqrt{m}} \Big(1+\frac{1}{m-1}\Big)^{m-\frac12}\leq  \frac{e^m}{\sqrt{m}}.
\]
Also, $a_{j}\leq a_{m-1}$ for each $j\in\{0,\ldots, m-1\}$, where we use that $a_{j+1}/a_j \geq 1$ for each $j\in\{0,\ldots, m-1\}$ and that $\frac{a_{m}}{a_{m-1}}=1$. Since $\sum_{j=0}^m a_j \leq e^{m}$, the upper estimate in \eqref{eq:exp lemma} follows from
\[
\sum_{j=0}^m a_j^2 \leq a_{m-1}\sum_{j=0}^m a_j \leq \frac{e^{2m}}{\sqrt{m}}.\qedhere
\]
\end{proof}

\bibliographystyle{plain}
\bibliography{BibliografieStab}

\begin{thebibliography}{10}

\bibitem{AmFeNi14}
K.~Ammari, E.~Feireisl, and S.~Nicaise.
\newblock Polynomial stabilization of some dissipative hyperbolic systems.
\newblock {\em Discrete Contin. Dyn. Syst.}, 34(11):4371--4388, 2014.

\bibitem{Anantharaman-Leautaud14}
N.~Anantharaman and M.~L{\'e}autaud.
\newblock Sharp polynomial decay rates for the damped wave equation on the
  torus.
\newblock {\em Anal. PDE}, 7(1):159--214, 2014.

\bibitem{ArBaHiNe11}
W.~Arendt, C.~Batty, M.~Hieber, and F.~Neubrander.
\newblock {\em Vector-valued {L}aplace transforms and {C}auchy problems},
  volume~96 of {\em Monographs in Mathematics}.
\newblock Birkh\"auser/Springer Basel AG, Basel, second edition, 2011.

\bibitem{BaEnPrSc06}
A.~B{\'a}tkai, K.-J. Engel, J.~Pr{\"u}ss, and R.~Schnaubelt.
\newblock Polynomial stability of operator semigroups.
\newblock {\em Math. Nachr.}, 279(13-14):1425--1440, 2006.

\bibitem{BaFaSh03}
A.~B\'atkai, E.~Fa\v{s}anga, and R.~Shvidkoy.
\newblock Hyperbolicity of delay equations via {F}ourier multipliers.
\newblock {\em Acta Sci. Math. (Szeged)}, 69(1-2):131--145, 2003.

\bibitem{BaBlSr03}
C.~Batty, M.~Blake, and S.~Srivastava.
\newblock A non-analytic growth bound for {L}aplace transforms and semigroups
  of operators.
\newblock {\em Integral Equations Operator Theory}, 45(2):125--154, 2003.

\bibitem{BaChTo16}
C.~Batty, R.~Chill, and Y.~Tomilov.
\newblock Fine scales of decay of operator semigroups.
\newblock {\em J. Eur. Math. Soc. (JEMS)}, 18(4):853--929, 2016.

\bibitem{Batty-Duyckaerts08}
C.~Batty and T.~Duyckaerts.
\newblock Non-uniform stability for bounded semi-groups on {B}anach spaces.
\newblock {\em J. Evol. Equ.}, 8(4):765--780, 2008.

\bibitem{BaPaSe16}
C.~Batty, L.~Paunonen, and D.~Seifert.
\newblock {Optimal energy decay in a one-dimensional coupled wave-heat system}.
\newblock {\em J. Evol. Equ.}, 16(3):649--664, 2016.

\bibitem{Batty-Srivastava03}
C.~Batty and S.~Srivastava.
\newblock The non-analytic growth bound of a {$C_0$}-semigroup and
  inhomogeneous {C}auchy problems.
\newblock {\em J. Differential Equations}, 194(2):300--327, 2003.

\bibitem{Blake99}
M.~Blake.
\newblock {\em Asymptotically Norm-Continuous Semigroups of Operators}.
\newblock PhD thesis, University of Oxford, 1999.

\bibitem{Borichev-Tomilov10}
A.~Borichev and Y.~Tomilov.
\newblock Optimal polynomial decay of functions and operator semigroups.
\newblock {\em Math. Ann.}, 347(2):455--478, 2010.

\bibitem{Burq98}
N.~Burq.
\newblock D\'ecroissance de l'\'energie locale de l'\'equation des ondes pour
  le probl\`eme ext\'erieur et absence de r\'esonance au voisinage du r\'eel.
\newblock {\em Acta Math.}, 180(1):1--29, 1998.

\bibitem{CaCaTe17}
M.~Cavalcanti, V.D. Cavalcanti, and L.~Tebou.
\newblock {S}tabilization of the wave equation with localized compensating
  frictional and {K}elvin-{V}oigt dissipating mechanisms.
\newblock {\em Electron. J. Differential Equations}, 2017(83):1--18, 2017.

\bibitem{Chill-Seifert16}
R.~Chill and D.~Seifert.
\newblock Quantified versions of {I}ngham's theorem.
\newblock {\em Bull. Lond. Math. Soc.}, 48(3):519--532, 2016.

\bibitem{ClLaMoRa00}
S.~Clark, Y.~Latushkin, S.~Montgomery-Smith, and T.~Randolph.
\newblock Stability radius and internal versus external stability in {B}anach
  spaces: an evolution semigroup approach.
\newblock {\em SIAM J. Control Optim.}, 38(6):1757--1793, 2000.

\bibitem{DeHiPr03}
R.~Denk, M.~Hieber, and J.~Pr{\"u}ss.
\newblock {$\mathscr{R}$}-boundedness, {F}ourier multipliers and problems of
  elliptic and parabolic type.
\newblock {\em Mem. Amer. Math. Soc.}, 166(788):viii+114, 2003.

\bibitem{DiJaTo95}
J.~Diestel, H.~Jarchow, and A.~Tonge.
\newblock {\em Absolutely summing operators}, volume~43 of {\em Cambridge
  Studies in Advanced Mathematics}.
\newblock Cambridge University Press, Cambridge, 1995.

\bibitem{Engel-Nagel00}
K.~Engel and R.~Nagel.
\newblock {\em One-parameter semigroups for linear evolution equations}, volume
  194 of {\em Graduate Texts in Mathematics}.
\newblock Springer-Verlag, New York, 2000.
\newblock With contributions by S. Brendle, M. Campiti, T. Hahn, G. Metafune,
  G. Nickel, D. Pallara, C. Perazzoli, A. Rhandi, S. Romanelli and R.
  Schnaubelt.

\bibitem{vanGaans06}
O.~van Gaans.
\newblock On {$R$}-boundedness of unions of sets of operators.
\newblock In {\em Partial differential equations and functional analysis},
  volume 168 of {\em Oper. Theory Adv. Appl.}, pages 97--111. Birkh\"auser,
  Basel, 2006.

\bibitem{GaToKa96}
J.~Garc{\'{\i}}a-Cuerva, J.~L. Torrea, and K.~S. Kazarian.
\newblock On the {F}ourier type of {B}anach lattices.
\newblock In {\em Interaction between functional analysis, harmonic analysis,
  and probability ({C}olumbia, {MO}, 1994)}, volume 175 of {\em Lecture Notes
  in Pure and Appl. Math.}, pages 169--179. Dekker, New York, 1996.

\bibitem{Girardi-Weis03b}
M.~Girardi and L.~Weis.
\newblock Criteria for {R}-boundedness of operator families.
\newblock In {\em Evolution equations}, volume 234 of {\em Lecture Notes in
  Pure and Appl. Math.}, pages 203--221. Dekker, New York, 2003.

\bibitem{Haase06a}
M.~Haase.
\newblock {\em The functional calculus for sectorial operators}, volume 169 of
  {\em Operator Theory: Advances and Applications}.
\newblock Birkh\"auser Verlag, Basel, 2006.

\bibitem{Hao-Liu13}
J.~Hao and Z.~Liu.
\newblock Stability of an abstract system of coupled hyperbolic and parabolic
  equations.
\newblock {\em Z. Angew. Math. Phys.}, 64(4):1145--1159, 2013.

\bibitem{Hieber99}
M.~Hieber.
\newblock Operator valued {F}ourier multipliers.
\newblock In {\em Topics in nonlinear analysis}, volume~35 of {\em Progr.
  Nonlinear Differential Equations Appl.}, pages 363--380. Birkh\"auser, Basel,
  1999.

\bibitem{Hieber01}
M.~Hieber.
\newblock A characterization of the growth bound of a semigroup via {F}ourier
  multipliers.
\newblock In {\em Evolution equations and their applications in physical and
  life sciences ({B}ad {H}errenalb, 1998)}, volume 215 of {\em Lecture Notes in
  Pure and Appl. Math.}, pages 121--124. Dekker, New York, 2001.

\bibitem{Hille-Phillips57}
E.~Hille and R.~S. Phillips.
\newblock {\em Functional analysis and semi-groups}.
\newblock American Mathematical Society Colloquium Publications, vol. 31.
  American Mathematical Society, Providence, R. I., 1957.
\newblock rev. ed.

\bibitem{Huang-vanNeerven99}
S.-Z. Huang and J.~van Neerven.
\newblock {$B$}-convexity, the analytic {R}adon-{N}ikodym property, and
  individual stability of {$C_0$}-semigroups.
\newblock {\em J. Math. Anal. Appl.}, 231(1):1--20, 1999.

\bibitem{HyNeVeWe16}
T.~Hyt\"onen, J.~van Neerven, M.~Veraar, and L.~Weis.
\newblock {\em Analysis in {B}anach Spaces. {V}olume {I}: {M}artingales and
  {L}ittlewood-{P}aley Theory}, volume~63 of {\em Ergebnisse der Mathematik und
  ihrer Grenzgebiete (3)}.
\newblock Springer, 2016.

\bibitem{HyNeVeWe2}
T.~Hyt\"onen, J.~van Neerven, M.~Veraar, and L.~Weis.
\newblock {\em Analysis in {B}anach spaces. {V}olume {II}. {P}robabilistic
  {M}ethods and {O}perator {T}heory.}, volume~67 of {\em Ergebnisse der
  Mathematik und ihrer Grenzgebiete. 3. Folge.}
\newblock Springer, 2017.

\bibitem{Hytonen-Veraar09}
T.~Hyt{\"o}nen and M.~Veraar.
\newblock {$R$}-boundedness of smooth operator-valued functions.
\newblock {\em Integral Equations Operator Theory}, 63(3):373--402, 2009.

\bibitem{Kaashoek-VerduynLunel94}
M.~A. Kaashoek and S.~M. Verduyn~Lunel.
\newblock An integrability condition on the resolvent for hyperbolicity of the
  semigroup.
\newblock {\em J. Differential Equations}, 112(2):374--406, 1994.

\bibitem{Kunstmann-Weis04}
P.~C. Kunstmann and L.~Weis.
\newblock Maximal {$L_p$}-{R}egularity for {P}arabolic {E}quations, {F}ourier
  {M}ultiplier {T}heorems and {$H^\infty$}-functional {C}alculus.
\newblock In {\em Functional Analytic Methods for Evolution Equations (Levico
  Terme 2001)}, volume 1855 of {\em Lecture Notes in Math.}, pages 65--312.
  Springer, Berlin, 2004.

\bibitem{Kwapien72}
S.~Kwapie{\'n}.
\newblock Isomorphic characterizations of inner product spaces by orthogonal
  series with vector valued coefficients.
\newblock {\em Studia Math.}, 44:583--595, 1972.

\bibitem{KwVeWe14}
S.~Kwapie{\'n}, M.~Veraar, and L.~Weis.
\newblock {$R$}-boundedness versus $\gamma$-boundedness.
\newblock {\em Ark. Mat.}, 54(1):125--145, 2016.

\bibitem{Latushkin-Rabiger05}
Y.~Latushkin and F.~R{\"a}biger.
\newblock Operator valued {F}ourier multipliers and stability of strongly
  continuous semigroups.
\newblock {\em Integral Equations Operator Theory}, 51(3):375--394, 2005.

\bibitem{Latushkin-Shvydkoy01}
Y.~Latushkin and R.~Shvydkoy.
\newblock Hyperbolicity of semigroups and {F}ourier multipliers.
\newblock In {\em Systems, approximation, singular integral operators, and
  related topics ({B}ordeaux, 2000)}, volume 129 of {\em Oper. Theory Adv.
  Appl.}, pages 341--363. Birkh\"auser, Basel, 2001.

\bibitem{Leautaud-Lerner17}
M.~L\'eautaud and N.~Lerner.
\newblock Energy decay for a locally undamped wave equation.
\newblock {\em Ann. Fac. Sci. Toulouse Math. (6)}, 26(1):157--205, 2017.

\bibitem{Lebeau96}
G.~Lebeau.
\newblock \'{E}quation des ondes amorties.
\newblock In {\em Algebraic and geometric methods in mathematical physics
  ({K}aciveli, 1993)}, volume~19 of {\em Math. Phys. Stud.}, pages 73--109.
  Kluwer Acad. Publ., Dordrecht, 1996.

\bibitem{Lebeau-Robbiano97}
G.~Lebeau and L.~Robbiano.
\newblock Stabilisation de l'\'equation des ondes par le bord.
\newblock {\em Duke Math. J.}, 86(3):465--491, 1997.

\bibitem{Lindenstrauss-Tzafriri79}
J.~Lindenstrauss and L.~Tzafriri.
\newblock {\em Classical {B}anach spaces. {II}}, volume~97 of {\em Ergebnisse
  der Mathematik und ihrer Grenzgebiete}.
\newblock Springer-Verlag, Berlin-New York, 1979.
\newblock Function spaces.

\bibitem{Liu-Rao05}
Z.~Liu and B.~Rao.
\newblock Characterization of polynomial decay rate for the solution of linear
  evolution equation.
\newblock {\em Z. Angew. Math. Phys.}, 56(4):630--644, 2005.

\bibitem{Martinez11}
M.~Mart\'inez.
\newblock Decay estimates of functions through singular extensions of
  vector-valued {L}aplace transforms.
\newblock {\em J. Math. Anal. Appl.}, 375(1):196--206, 2011.

\bibitem{Martinez_Carracedo-Sanz_Alix01}
C.~Mart\'{i}nez~Carracedo and M.~Sanz~Alix.
\newblock {\em The theory of fractional powers of operators}, volume 187 of
  {\em North-Holland Mathematics Studies}.
\newblock North-Holland Publishing Co., Amsterdam, 2001.

\bibitem{vanNeerven96b}
J.~van Neerven.
\newblock {\em The asymptotic behaviour of semigroups of linear operators},
  volume~88 of {\em Operator Theory: Advances and Applications}.
\newblock Birkh\"auser Verlag, Basel, 1996.

\bibitem{vanNeerven96a}
J.~van Neerven.
\newblock Inequality of spectral bound and growth bound for positive semigroups
  in rearrangement invariant {B}anach function spaces.
\newblock {\em Arch. Math. (Basel)}, 66(5):406--416, 1996.

\bibitem{vanNeerven09}
J.~van Neerven.
\newblock Asymptotic behaviour of {$C_0$}-semigroups and {$\gamma$}-boundedness
  of the resolvent.
\newblock {\em J. Math. Anal. Appl.}, 358(2):380--388, 2009.

\bibitem{vanNeerven10}
J.~van Neerven.
\newblock {$\gamma$}-radonifying operators---a survey.
\newblock In {\em The {AMSI}-{ANU} {W}orkshop on {S}pectral {T}heory and
  {H}armonic {A}nalysis}, volume~44 of {\em Proc. Centre Math. Appl. Austral.
  Nat. Univ.}, pages 1--61. Austral. Nat. Univ., Canberra, 2010.

\bibitem{vNeStWe95}
J.~van Neerven, B.~Straub, and L.~Weis.
\newblock On the asymptotic behaviour of a semigroup of linear operators.
\newblock {\em Indag. Math. (N.S.)}, 6(4):453--476, 1995.

\bibitem{Paunonen14}
L.~Paunonen.
\newblock Polynomial stability of semigroups generated by operator matrices.
\newblock {\em J. Evol. Equ.}, 14(4-5):885--911, 2014.

\bibitem{Paunonen-Seifert17}
L.~Paunonen and D.~Seifert.
\newblock Asymptotics for infinite systems of differential equations.
\newblock {\em SIAM J. Control Optim.}, 55(2):1153--1178, 2017.

\bibitem{Pietsch-Wenzel98}
A.~Pietsch and J.~Wenzel.
\newblock {\em Orthonormal systems and {B}anach space geometry}, volume~70 of
  {\em Encyclopedia of Mathematics and its Applications}.
\newblock Cambridge University Press, Cambridge, 1998.

\bibitem{Rosenblum-Rovnyak}
M.~Rosenblum and J.~Rovnyak.
\newblock {\em Topics in {H}ardy classes and univalent functions}.
\newblock Birkh\"auser Advanced Texts: Basler Lehrb\"ucher. Birkh\"auser
  Verlag, Basel, 1994.

\bibitem{RoSeSt17}
J.~Rozendaal, D.~Seifert, and R.~Stahn.
\newblock Optimal rates of decay for operator semigroups on {H}ilbert spaces.
\newblock Online at \url{https://arxiv.org/abs/1709.08895}, 2017.

\bibitem{Rozendaal-Veraar17b}
J.~Rozendaal and M.~Veraar.
\newblock Fourier multiplier theorems on {B}esov spaces under type and cotype
  conditions.
\newblock {\em Banach J. Math. Anal.}, 11(4):713--743, 2017.

\bibitem{Rozendaal-Veraar17a}
J.~Rozendaal and M.~Veraar.
\newblock Fourier {M}ultiplier {T}heorems {I}nvolving {T}ype and {C}otype.
\newblock {\em J. Fourier Anal. Appl.}, 24(2):583--619, 2018.

\bibitem{SaAlMu12}
M.~L. Santos, D.~S. Almeida~J\'unior, and J.~E. Mu\~noz Rivera.
\newblock The stability number of the {T}imoshenko system with second sound.
\newblock {\em J. Differential Equations}, 253(9):2715--2733, 2012.

\bibitem{ScScSi12}
B.~Scharf, H.-J. Schmei\ss~er, and W.~Sickel.
\newblock Traces of vector-valued {S}obolev spaces.
\newblock {\em Math. Nachr.}, 285(8-9):1082--1106, 2012.

\bibitem{Schmeisser-Sickel04}
H.-J. Schmeisser and W.~Sickel.
\newblock {Traces, Gagliardo-Nirenberg inequailties and Sobolev type embeddings
  for vector-valued function spaces}.
\newblock Jena manuscript, 2004.

\bibitem{Seifert15a}
D.~Seifert.
\newblock A {K}atznelson-{T}zafriri theorem for measures.
\newblock {\em Integral Equations Operator Theory}, 81(2):255--270, 2015.

\bibitem{Sklyar-Polak17}
G.~M. Sklyar and P.~Polak.
\newblock On asymptotic estimation of a discrete type {$C_0$}-semigroups on
  dense sets: application to neutral type systems.
\newblock {\em Appl. Math. Optim.}, 75(2):175--192, 2017.

\bibitem{Stahn17b}
R.~Stahn.
\newblock {A quantified Tauberian theorem and local decay of $C_0$-semigroups}.
\newblock Online at \url{https://arxiv.org/abs/1705.03641}, 2017.

\bibitem{Stahn17e}
R.~Stahn.
\newblock {Local decay of ${C}_{0}$-semigroups with a possible singularity of
  logarithmic type at zero}.
\newblock In preparation, 2017.

\bibitem{Weis95}
L.~Weis.
\newblock The stability of positive semigroups on {$L_p$} spaces.
\newblock {\em Proc. Amer. Math. Soc.}, 123(10):3089--3094, 1995.

\bibitem{Weis97}
L.~Weis.
\newblock Stability theorems for semi-groups via multiplier theorems.
\newblock In {\em Differential equations, asymptotic analysis, and mathematical
  physics ({P}otsdam, 1996)}, volume 100 of {\em Math. Res.}, pages 407--411.
  Akademie Verlag, Berlin, 1997.

\bibitem{Weis2001}
L.~Weis.
\newblock Operator-valued {F}ourier multiplier theorems and maximal
  {$L_p$}-regularity.
\newblock {\em Math. Ann.}, 319(4):735--758, 2001.

\bibitem{Weis-Wrobel96}
L.~Weis and V.~Wrobel.
\newblock Asymptotic behavior of {$C_0$}-semigroups in {B}anach spaces.
\newblock {\em Proc. Amer. Math. Soc.}, 124(12):3663--3671, 1996.

\bibitem{Wrobel89}
V.~Wrobel.
\newblock Asymptotic behavior of {$C_0$}-semigroups in {$B$}-convex spaces.
\newblock {\em Indiana Univ. Math. J.}, 38(1):101--114, 1989.

\bibitem{Zabczyk75}
J.~Zabczyk.
\newblock A note on {$C_{0}$}-semigroups.
\newblock {\em Bull. Acad. Polon. Sci. S\'er. Sci. Math. Astronom. Phys.},
  23(8):895--898, 1975.

\end{thebibliography}

\end{document}